\newtheorem{thm}{Theorem}[section]
\newtheorem{lem}[thm]{Lemma}
\newtheorem{pro}[thm]{Proposition}
\newtheorem{cor}[thm]{Corollary}
\theoremstyle{definition}
\newtheorem{rem}{Remark}[section]
\makeatletter\@addtoreset{equation}{section}
\renewcommand\theequation
\begin{document}

\title{Affine Orlicz P\'olya-Szeg\"o principles and their equality cases}

\author{Youjiang Lin \footnote{Research of the first named author is supported by the funds of the Basic and Advanced Research Project of CQ CSTC cstc2015jcyjA00009, cstc2018jcyjAX0790 and Scientific and Technological Research Program of Chongqing Municipal Education Commission (KJ1500628). }
\;and  Dongmeng Xi\footnote{Research of the second named author is supported by Shanghai Sailing Program 16YF1403800,
NSFC 11601310, and Chinese Post-doctoral Innovation Talent Support Program BX201600035.}}

\date{}
 \maketitle
{\bf Abstract}. The conjecture about the Orlicz P\'olya-Szeg\"o principle posed in \cite{LYJ17} is proved. The cases of equality are characterized in the affine Orlicz  P\'olya-Szeg\"o principle with respect to Steiner symmetrization and Schwarz spherical symmetrization.

 {\bf AMS Subject Classification
2010}  46E35, 46E30, 52A40

{\bf Keywords and Phrases.}  P\'olya-Szeg\" o principle; Orlicz-Sobolev space; Steiner symmetrization; Schwarz spherical
symmetrization; Affine isoperimetric inequality.

\section{Introduction}

The classical P\'olya-Szeg\"o principle states that the $L_p$ norm of the gradient of any real-valued function from a certain class does not increase under an appropriate rearrangement. Schwarz spherical symmetrization about a point and Steiner symmetrization about a hyperplane are probably the most popular symmetrizations in the literature. P\'olya-Szeg\"o inequalities for these symmetrizations play a fundamental
role in the solution to a number of variational problems in different areas such as isoperimetric
inequalities, optimal forms of Sobolev inequalities, and sharp a priori estimates of solutions to
second-order elliptic or parabolic boundary value problems; see, for example, \cite{Ci00,CEFT08,CF02,CF06,ET04,FV04,Ma11,PS51} and
the references therein. In recent years, many important generalizations and variations have been obtained (see, e.g.,\cite{BCFP,BF15,Capriani14,Nguyen16,Wang13})

It is a remarkable discovery of Zhang \cite{Zhang99} that the Petty projection inequality, extended to a suitable class of nonconvex sets, can replace the isoperimetric inequality and then leads to an affine Sobolev inequality
which is stronger than the classical Sobolev inequality. An important ingredient in the proofs of sharp affine Sobolev-type inequalities is a strengthened affine P\'olya-Szeg\"o principle. The affine P\'olya-Szeg\"o principle asserts that
\begin{eqnarray}
\mathcal{E}_p(f^{\star})\leq\mathcal{E}_p(f),
\end{eqnarray}
where $f^{\ast}$ denotes the Schwarz spherical
symmetrization of $f$ and $\mathcal{E}_p$ denotes the $L^p$ affine energy of $f$ (see \cite{LYZ02}, or take $\phi(t)=|t|^p$ in  \eqref{1a}).
 It was proved by Lutwak, Yang and Zhang \cite{LYZ02} for $1\leq p<n$
and by Cianchi, Lutwak, Yang and Zhang \cite{CLYZ09} for all $p\geq 1$. In this remarkable affine rearrangement
inequality, an $L^p$ affine energy replaces the standard $L^p$ norm of the gradient leading
to an inequality which is significantly stronger than its classical Euclidean counterpart.
Moreover, Lutwak et al. \cite{LYZ02} and Cianchi et al. \cite{CLYZ09} obtained new sharp affine
Sobolev, Moser-Trudinger and Morrey-Sobolev inequalities by applying their affine
P\'olya-Szeg\"o principle, thereby demonstrating the power of this new affine symmetrization
inequality. Later, Haberl, Schuster, and Xiao \cite{HSX12} proved a remarkable asymmetric version of the affine P\'olya-Szeg\" o-type inequality which strengthens and implies the affine P\'olya-Szeg\" o principle of Cianchi et al. \cite{CLYZ09}. About the affine isoperimetric inequalities and their functional versions, also see \cite{CCF05,CPS15,FMP08,Gardner02,GZ98,HP14,HS0902,LX17,LL10,Ludwig10,LR99,LR10,LXZ11,LYZ00,LYZ06,XGL,XL16}.

The affine $L_p$ P\'olya-Szeg\"o-type principle is closely related to the $L_p$ Brunn-Minkowski theory of convex bodies (see, e.g., \cite{Lu93,Lu96,BLYZ12}). Based on the the seminal work of Lutwak, Yang and Zhang \cite{LYZ10,LYZ1002}, now the $L_p$ Brunn-Minkowski theory has been extended to the Orlicz-Brunn-Minkowski theory. The Orlicz-Brunn-Minkowski theory has expanded rapidly (see e.g., \cite{Boroczky13,GHW14,GHW15,HLYZ10,HP14,LYZ10,LYZ1002,XJL14,ZZX14,Zhu12}). It is natural to consider the affine P\'olya-Szeg\" o-type principle in Orlicz-Sobolev spaces. In \cite{LYJ17}, using functional Steiner symmetrization, the first named author proved an affine Orlicz P\'olya-Szeg\"o principle for log-concave functions, which includes the affine $L_p$ P\'olya-Szeg\"o  principle as special case. The case of equality of the  affine Orlicz P\'olya-Szeg\"o principle for log-concave functions is also characterized. In \cite{LYJ17}, the first named author conjectured that the principle can be extended to the general Orlicz-Sobolev functions.  In this paper, we confirm this conjecture and characterize the case of equality. An
affine Orlicz P\'olya-Szeg\"o principle with respect to Orlicz-Sobolev functions is formulated and proved.

In this paper, we mainly make use of Steiner
symmetrization of one-dimensional restrictions of Sobolev functions and Fubini's theorem to prove our results.  The proof  has the advantage of providing us with information about
functions yielding equality. The technique exploited in this paper differs from those of previous papers on affine P\'olya-Szeg\" o type inequalities, that make substantial use of fine results from the Brunn-Minkowski theory of convex bodies. The proof of the symmetric affine P\'olya-Szeg\" o principle in \cite{CLYZ09} mainly  relies on the $L_p$ Petty projection inequality from \cite{LYZ00}  and  the solution of the normalized $L_p$ Minkowski problem \cite{LYZ04}. The proof of the asymmetric affine P\'olya-Szeg\" o principle \cite{HS0902} mainly relies on a generalization of the $L_p$ Petty projection inequality established by Haberl and Schuster \cite{HS09} and  the solution of the normalized $L_p$ Minkowski problem \cite{LYZ04}. The techniques for proving the affine $L_p$ P\'olya-Szeg\" o principle could not be adapted to establish the affine Orlicz P\'olya-Szeg\" o principle. One of the reasons is that the function $\phi$ defining the Orlicz-Sobolev spaces is usually  not multiplicative, i.e., $\phi(xy)\neq\phi(x)\phi(y)$ for $x,y\in\mathbb{R}$. Moreover the Orlicz Minkowski problem has not been completely solved. Our approach is based on the functional Steiner symmetrization and makes use of a result for Steiner symmetrization with approximation of Schwarz
symmetrization by sequences of Steiner symmetrizations. Moreover, we  prove the affine Orlicz P\'olya-Szeg\"o  principle and its case of equality  not only with respect to Schwarz spherical symmetrization  but also with respect to Steiner symmetrization. The affine Orlicz P\'olya-Szeg\"o  principle  for Steiner symmetrization is new even in the $L^p$ setting.

In the remarkable paper \cite{CF06}, Cianchi and Fusco proved a beautiful P\'olya-Szeg\"o-type inequality  and analyzed the cases of equality  in Steiner symmetrization inequalities for Dirichlet-type
integrals.  The ideas and techniques of Cianchi and Fusco play a critical role
throughout this paper, especially in the proofs of the affine Orlicz P\'olya-Szeg\"o principle with respect to Steiner symmetrization and  its case of quality. It would be impossible to overstate our reliance on their work.

As pointed out in \cite{CF06}, investigations on the cases of equality in P\'olya-Szeg\"o type principles are more recent, and typically require an additional delicate analysis.
Such
a description has first been the object of the series of papers \cite{AFP00,BZ88,Ci10,CCF05,CF06,FV04,FMP08,Nguyen16} and has been recently
extended and simplified by new contributions, including \cite{BCFP,Bu97,Bu04,BF15,CEFT08,CF0602,ET04,ER09,Wang13}.
An impulse to
the study of this delicate issue was given by the paper \cite{Kawohl86}, where the
symmetry of PS-extremals for Schwarz and Steiner symmetrizations was established,
by classical techniques, in special classes of functions and domains.

In \cite{BZ88} Brothers and Ziemer characterized the equality cases in the
P\'olya-Szeg\"o inequality for the Schwarz rearrangement of a Sobolev function under the minimal
assumption that the set of critical points of the rearranged function has zero Lebesgue measure
(see also \cite{FV04} for an interesting alternate
proof). A version of
this result in the framework of functions of bounded variation can be found in \cite{CF02}.  A Brothers-Ziemer type theorem for
the affine P\'olya-Szeg\"o principle and a quantitative affine P\'olya-Szeg\"o principle were established by Wang \cite{Wang13}. The main goal of the second part of this paper is to prove a Brothers-Ziemer type theorem for the affine Orlicz P\'olya-Szeg\"o principle with respect to Schwarz symmetrization. Since the approach exploited in the paper relies on the result dealing with cases of equality for Steiner symmetrization, we assume that the domain of function is of finite perimeter. In view of  the available result for the Euclidean P\'olya-Szeg\"o principle, our assumption that the domain of function is a set of finite perimeter is probably unnecessary. However, if we remove such an assumption, then this would require the use of a different method to prove our result, that would go beyond the scope of the paper.

The paper is organized as follows. In Section \ref{s2} we state and comment the main results
and in Section \ref{s3} we collect some background material on  the Brunn-Minkowski theory and the theory of Sobolev functions. Section \ref{s4} is devoted to the affine Orlicz P\'olya-Szeg\"o principle with respect to Steiner symmetrization  while Section \ref{s5} deals with the case of Schwarz spherical symmetrization.

\section{Main results}\label{s2}
We begin with some definitions and elementary facts about Steiner symmetrization of sets and functions. Steiner symmetrization is a classical and very well-known
device, which has seen a number of remarkable applications to problems of
geometric and functional nature, see, e.g., \cite{Bianchi17,Bu96,Bu97,Bu04,CCF05,Lin17,Ta94,Trudinger97}.

Given two sets $E$ and $F$, we denote the {\it symmetric difference} by $E\triangle F:=(E\cup F)\setminus(E\cap F)$.
Given two open sets $\Omega^{\prime} \subset \Omega$ we write $\Omega^{\prime}\Subset\Omega$ if $\Omega^{\prime}$ is compactly contained in $\Omega$, i.e., ${\rm cl}\;\Omega^{\prime}\subset\Omega$, here ${\rm cl}\;\Omega^{\prime}$ denotes the closure of $\Omega^{\prime}$. A point $x$ in the Euclidean space $\mathbb{R}^n$, $n\geq 2$, will be usually labeled by $(x^{\prime},y)$, where $x^{\prime}=(x_1,\dots,x_{n-1})\in\mathbb{R}^{n-1}$ and $y\in\mathbb{R}$; similarly, when $x\in\mathbb{R}^{n+1}$, we shall write $x$ as $(x^{\prime},y,t)$. To emphasize the different roles of the variables $y$ and $t$, we shall also write $\mathbb{R}^n=\mathbb{R}^{n-1}\times\mathbb{R}_y$ and $\mathbb{R}^{n+1}=\mathbb{R}^{n-1}\times\mathbb{R}_y\times\mathbb{R}_t$. Consistent notations will be used for subsets of $\mathbb{R}^n$ and $\mathbb{R}^{n+1}$.  Let $\mathcal{L}^m$ denote the outer Lebesgue measure in $\mathbb{R}^m$. Throughout this paper, for $A,B\subset\mathbb{R}^n$, $A$ is {\it equivalent} to $B$ means that $\mathcal{L}^n(A\triangle B)=0$.

Given any measurable subset $E$ of $\mathbb{R}^n$, define, for $x^{\prime}\in\mathbb{R}^{n-1}$,
\begin{eqnarray}\label{2aa}
E_{x^{\prime}}=\{y\in\mathbb{R}:(x^{\prime},y)\in E\}
\end{eqnarray}
and
\begin{eqnarray}\label{2cc}
\ell_E(x^{\prime})=\mathcal{L}^1(E_{x^{\prime}}).
\end{eqnarray}
 Then, we define the {\it Steiner symmetral} $E^s$ of $E$ about the hyperplane $\{y=0\}$ as
$$E^s=\{(x^{\prime},y)\in\mathbb{R}^n:|y|<\ell_E(x^{\prime})/2\}.$$
When $E\subset\mathbb{R}^{n-1}\times\mathbb{R}_y\times\mathbb{R}_t$, its Steiner symmetral $E^s$ about $\{y=0\}$ is defined analogously, after replacing (\ref{2aa}) and (\ref{2cc}) by corresponding definitions of $E_{x^{\prime},t}$ and $\ell_E(x^{\prime},t)$.

Let $\pi_{n-1}(\Omega)$ denote the orthogonal projection of $\Omega\subset\mathbb{R}^n$ onto $\mathbb{R}^{n-1}$. Let $\Omega$ be a measurable subset of $\mathbb{R}^n$ and let $f$ be a nonnegative measurable function in $\Omega$ such that, for $\mathcal{L}^{n-1}$-a.e. $x^{\prime}\in\pi_{n-1}(\Omega)$,
\begin{eqnarray}\label{2g}
\mathcal{L}^1(\{y\in\Omega_{x^{\prime}}:f(x^{\prime},y)>t\})<\infty\;\;\;{\rm for\;every}\;t>0.
\end{eqnarray}
The {\it Steiner rearrangement} $f^s$ of $f$ is the function from $\mathbb{R}^n$ to $[0,+\infty]$ given by
$$f^s(x^{\prime},y)=\inf\{t>0:\;\mu_f(x^{\prime},t)\leq 2|y|\}\;\;{\rm for}\;(x^{\prime},y)\in\mathbb{R}^{n-1}\times\mathbb{R}_y,$$
where
$$\mu_f(x^{\prime},t)=\mathcal{L}^1(\{y\in\mathbb{R}:\;f_0(x^{\prime},y)>t\}),$$
the {\it distribution function} of $f(x^{\prime},\cdot)$, and $f_0$ denotes the continuation of $f$ to $\mathbb{R}^n$ which vanishes outside $\Omega$. Note that $f^s=0$ $\mathcal{L}^n$-a.e. in $\mathbb{R}^n\setminus \Omega^s$.

The notions of Steiner symmetral of a set and Steiner rearrangement of a function are clearly related. Actually, if $f:\Omega\rightarrow [0,+\infty)$ is as above, and
\begin{eqnarray}
\mathcal{S}_f=\{(x^{\prime},y,t)\in\mathbb{R}^{n+1}:\;(x^{\prime},y)\in\Omega,\;0<t<f(x^{\prime},y)\},
\end{eqnarray}
the {\it subgraph} of $f$,
 then
\begin{eqnarray}\label{2m}
(\mathcal{S}_f)^s\;{\rm is\;equivalent\;to}\;\mathcal{S}_{f^s}.
\end{eqnarray}

Moreover, for the {\it level set} of $f$ defined by
\begin{eqnarray}
[f]_t=\{(x^{\prime},y)\in\Omega:\;f(x^{\prime},y)>t\},
\end{eqnarray}
we have
\begin{eqnarray}\label{2d}
[f]^s_t\;{\rm is\;equivalent\;to}\;[f^s]_t\;\;{\rm for\;every}\;t>0.
\end{eqnarray}

Let $\mathcal{N}$ be the class of convex functions $\phi:\mathbb{R}\rightarrow [0,\infty)$ such that $\phi(0)=0$ and such that $\phi$ is strictly decreasing on $(-\infty,0]$ or $\phi$ is strictly increasing on $[0,\infty)$.  The subclass of $\mathcal{N}$ consisting of those $\phi\in\mathcal{N}$ that are strictly convex will be denoted by $\mathcal{N}_s$.
Throughout the paper, we always set $\Phi(t):=\max\{\phi(t),\phi(-t)\}$, $t\in[0,\infty)$. It is easily checked that $\Phi(t)$ is a convex function and  strictly increasing on $[0,\infty)$.

We always assume that $\Omega$ is a bounded open subset of $\mathbb{R}^n$. Let $W^{1,\Phi}(\Omega)$ be the first order Orlicz-Sobolev space (see Section \ref{s3} for the precise definition) corresponding to $\Phi$.  Let $W_0^{1,\Phi}(\Omega)$ denote the subspace of $W^{1,\Phi}(\Omega)$ of those functions whose continuation by $0$ outside $\Omega$ belongs to $W^{1,\Phi}(\mathbb{R}^n)$. For $v\in S^{n-1}$ and $f\in  W_0^{1,\Phi}(\Omega)$,  we define
\begin{eqnarray}\label{52}
\|v\|_{f,\phi}=\|\nabla_vf\|_{\phi}=\inf\left\{\lambda>0:\;\frac{1}{|\Omega|}\int_{\Omega}\phi\left(\frac{\nabla_vf}{\lambda}\right)dx\leq 1\right\},
\end{eqnarray}
where $\nabla_vf$ is the directional derivative of $f$ in the direction $v$. The definition immediately provides the extension of $\|\cdot\|_{f,\phi}$ from $S^{n-1}$ to $\mathbb{R}^n$. Now $(\mathbb{R}^n,\|\cdot\|_{f,\phi})$ is the $n$-dimensional Banach space that we shall associate with $f$.
And its unit ball $B_{\phi}(f)=\{x\in\mathbb{R}^n:\|x\|_{f,\phi}\leq 1\}$ is a convex body in $\mathbb{R}^n$. An important fact is that its volume $|B_{\phi}(f)|$ is invariant under affine transformations of the form $x\mapsto Ax+x_0$, with $x_0\in\mathbb{R}^n$ and $A\in SL(n)$. We call the unit ball $B_{\phi}(f)$ the {\it Orlicz-Sobolev ball of $f$}. We call
\begin{eqnarray}\label{1a}
\mathcal {E}_{\phi}(f):=|B_{\phi}(f)|^{-\frac{1}{n}}=\left(\frac{1}{n}\int_{S^{n-1}}\|\nabla_vf\|_{\phi}^{-n}dv\right)^{-\frac{1}{n}}
\end{eqnarray}
the {\it Orlicz-Sobolev affine energy} of $f$.

In this paper, we will prove an affine Orlicz P\'olya-Szeg\" o principle with respect to Steiner symmetrization.

\begin{thm}\label{2.1}
Let $\Omega$ be a bounded open subset of $\mathbb{R}^n$ and let $f$ be a nonnegative function from $W_0^{1,\Phi}(\Omega)$. Then for every Steiner rearrangement $f^s$ of $f$,
\begin{eqnarray}\label{2e}
\mathcal {E}_{\phi}(f^s)\leq \mathcal {E}_{\phi}(f).
\end{eqnarray}
\end{thm}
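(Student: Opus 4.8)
I plan to derive \eqref{2e} from the equivalent volume inequality $|B_\phi(f^s)|\ge |B_\phi(f)|$, which is immediate from \eqref{1a} since $\mathcal E_\phi=|B_\phi(\cdot)|^{-1/n}$ and $t\mapsto t^{-1/n}$ is decreasing. Before attacking this, I would make a standard reduction: by truncation, mollification, and composition with suitable piecewise linear functions (exactly as in \cite{CF06}) it is enough to prove the inequality for nonnegative $f$ whose one–dimensional sections $y\mapsto f(x',y)$ are, for $\mathcal L^{n-1}$-a.e.\ $x'$, regular enough that the fibre distribution function $\mu_f(x',\cdot)$ is absolutely continuous and the coarea formula applies on almost every fibre; one then passes to the general case using that $f\mapsto f^s$ is continuous in $L^1$ and that $\|\nabla_v\cdot\|_\phi$ is weakly lower semicontinuous, so that $v\mapsto\|\nabla_v f^s\|_\phi^{-n}$ is stable enough to transfer the inequality along an approximating sequence.

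The heart of the argument is a fibrewise analysis in the symmetrization direction $\mathbb{R}_y=\mathbb{R}e_n$, combined with Fubini's theorem. Fix a unit vector $v=(v',v_n)$ and $\lambda>0$, and write $\nabla_v f^s(x',y)=v'\cdot\nabla_{x'}f^s(x',y)+v_n\,\partial_y f^s(x',y)$. By Fubini,
\[
\frac1{|\Omega^s|}\int_{\Omega^s}\phi\!\left(\frac{\nabla_v f^s}{\lambda}\right)dx
=\frac1{|\Omega|}\int_{\pi_{n-1}(\Omega)}\!\left(\int_{\mathbb{R}}\phi\!\left(\frac{\nabla_v f^s(x',y)}{\lambda}\right)dy\right)dx',
\]
using $|\Omega^s|=|\Omega|$. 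On each fibre I would insert the explicit description of the gradient of a one–dimensional decreasing rearrangement (the Cianchi–Fusco device): on the level set of $f^s(x',\cdot)$ at height $t$ the derivative $\partial_y f^s$ is a harmonic-type mean of the values of $\partial_y f(x',\cdot)$ on the corresponding super-level set of $f(x',\cdot)$, while $\nabla_{x'}f^s$ is the average of the values $\nabla_{x'}f(x',\cdot)$ there with weights $1/|\partial_y f(x',\cdot)|$, and the $\mathcal L^1$-length of the level set is preserved. Feeding this into the convex function $\phi$ and using Jensen's inequality yields, for every $v$ and $\lambda$, an upper bound for the fibre integral of $\phi(\nabla_v f^s/\lambda)$ in terms of quantities built from $\nabla f$; taking the infimum over $\lambda$ controls $\|\nabla_v f^s\|_\phi$.

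The final, and decisive, step is the passage from the fibrewise estimates to the volume comparison, and this is where the affine (rather than Euclidean) nature of $\mathcal E_\phi$ is indispensable. For a \emph{single} direction $v$ the inequality $\|\nabla_v f^s\|_\phi\le\|\nabla_v f\|_\phi$ is in general false — already for $n=1$ and non-even $\phi$ one can produce $f$ with $\|(f^s)'\|_\phi>\|f'\|_\phi$ — so one cannot argue directionwise. Instead I would raise to the power $-n$ and integrate over $v\in S^{n-1}$, slicing $S^{n-1}$ by the pencil of great circles through the poles $\pm e_n$ (i.e.\ writing $v=(\sin\theta)\,\omega+(\cos\theta)\,e_n$ with $\omega\in S^{n-2}$, $dv=(\sin\theta)^{n-2}d\theta\,d\omega$). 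Over each such circle the gain at some directions is traded against the loss at others; combining the fibre bounds above with Jensen's inequality and the integral-geometric identity for $dv$ should give
\[
\int_{S^{n-1}}\|\nabla_v f^s\|_\phi^{-n}\,dv\ \ge\ \int_{S^{n-1}}\|\nabla_v f\|_\phi^{-n}\,dv,
\]
i.e.\ $|B_\phi(f^s)|\ge|B_\phi(f)|$, which is \eqref{2e}.

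The main obstacle is precisely this last trade-off: one needs a rearrangement inequality for the two–dimensional slices $\mathrm{span}(e_n,\omega)$, $\omega\in S^{n-2}$, that is affine in character (it involves the whole circle of directions in the slice, not a single one), and whose proof must reconcile the weighted-average structure of $\nabla_{x'}f^s$ with the infimum defining the Luxemburg-type norm $\|\cdot\|_\phi$ — a non-even, non-homogeneous normalization for which the usual coarea decoupling is unavailable. I expect that carrying this out rigorously requires the full strength of the one–dimensional gradient analysis of \cite{CF06}, together with the integral-geometric decomposition of $S^{n-1}$ about the symmetrization axis; this is also the step that must be examined with care when one later wants to read off the equality cases.
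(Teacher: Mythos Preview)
Your fibrewise setup and the Cianchi--Fusco averaging formulas are exactly the right engine, and you correctly identify that a single-direction inequality $\|\nabla_v f^s\|_\phi\le\|\nabla_v f\|_\phi$ is false in general. But the final step---``combining the fibre bounds \ldots\ with Jensen's inequality and the integral-geometric identity for $dv$ should give $\int\|\nabla_v f^s\|_\phi^{-n}\,dv\ge\int\|\nabla_v f\|_\phi^{-n}\,dv$''---is not a proof, and I do not see how your great-circle slicing would produce one. The quantity $\|\nabla_v f^s\|_\phi$ is defined by a threshold condition (an infimum over $\lambda$), not by an integral in $v$; there is no Jensen-type mechanism that turns your fibre bound on $\int\phi(\nabla_v f^s/\lambda)$ into a comparison of $\|\nabla_v f^s\|_\phi^{-n}$ integrated over a circle of directions against the same for $f$. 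You yourself flag this as ``the main obstacle'' and leave it as an expectation; as written, the argument stops precisely where the content is.

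The paper bypasses this difficulty with a geometric device you are missing: it proves the \emph{set inclusion} $(B_\phi(f))^s\subset B_\phi(f^s)$, which immediately gives $|B_\phi(f)|=|(B_\phi(f))^s|\le|B_\phi(f^s)|$. By Lemma~\ref{2f}, this inclusion is equivalent to the following \emph{pairwise} statement: whenever $\|(x_0',\eta_1)\|_{f,\phi}=\|(x_0',-\eta_2)\|_{f,\phi}=1$ (so the two $\phi$-integrals both equal $|\Omega|$ by Lemma~\ref{2k}), one has $\|(x_0',\tfrac{\eta_1+\eta_2}{2})\|_{f^s,\phi}\le 1$. That in turn follows from the single functional inequality
\[
\int\phi\!\Big(\Big(x_0',\tfrac{\eta_1+\eta_2}{2}\Big)\!\cdot\!\nabla f^s\Big)
\ \le\ \tfrac12\!\int\phi\big((x_0',\eta_1)\!\cdot\!\nabla f\big)
\ +\ \tfrac12\!\int\phi\big((x_0',-\eta_2)\!\cdot\!\nabla f\big),
\]
and \emph{this} is exactly what your fibrewise analysis (coarea on each line, the Cianchi--Fusco averages for $\nabla f^s$, Jensen for $\phi$, together with the elementary monotonicity of $t\mapsto\phi(at-b)+\phi(-at-b)$ to absorb the multiplicity $k(x',t)\ge1$ of the level sets) yields cleanly. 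In short: the missing idea is to trade the intractable $S^{n-1}$-integration for the Steiner-symmetral containment of the Orlicz--Sobolev ball, which converts the affine volume comparison into a two-direction midpoint inequality that your machinery actually proves.
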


Using Theorem \ref{2.1} and the convergence property of Steiner symmetrization, we can obtain the affine Orlicz P\'olya-Szeg\" o principle with respect to Schwarz spherical symmetrization.

\begin{thm}\label{2.2}
Let $\Omega$ be a bounded open subset of $\mathbb{R}^n$ and let $f$ be a nonnegative function from $W_0^{1,\Phi}(\Omega)$. Then for every  Schwarz spherical symmetrization $f^{\star}$ of $f$,
\begin{eqnarray}\label{2j}
\mathcal {E}_{\phi}(f^{\star})\leq \mathcal {E}_{\phi}(f).
\end{eqnarray}
\end{thm}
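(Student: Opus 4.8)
The plan is to deduce Theorem~\ref{2.2} from Theorem~\ref{2.1} by approximating $f^{\star}$ with finitely many Steiner rearrangements and then passing to the limit in the affine energy. First I would observe that, since $|B_{\phi}(\cdot)|$ (hence $\mathcal{E}_{\phi}$) is invariant under rotations (which belong to $SL(n)$) and under translations, Theorem~\ref{2.1} in fact gives $\mathcal{E}_{\phi}(g^{s})\le\mathcal{E}_{\phi}(g)$ for the Steiner rearrangement $g^{s}$ of an admissible $g$ about \emph{any} hyperplane: conjugate the stated case by a rotation taking the relevant direction to $e_{n}$. Next I would invoke the classical fact that the Schwarz symmetral is an $L^{1}$-limit of iterated Steiner symmetrizations: there exist directions $v_{1},v_{2},\dots\in S^{n-1}$ so that, with $f_{0}=f$, $\Omega_{0}=\Omega$, and $f_{k}$ the Steiner rearrangement of $f_{k-1}$ about $v_{k}^{\perp}$ (with $\Omega_{k}$ the corresponding Steiner symmetral of $\Omega_{k-1}$, a bounded open set), one has $f_{k}\in W_{0}^{1,\Phi}(\Omega_{k})$ and $f_{k}\to f^{\star}$ in $L^{1}(\mathbb{R}^{n})$. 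Because each Steiner symmetrization preserves Lebesgue measure, keeps the support inside any fixed ball $B_{R}\supseteq\Omega$, and preserves the distribution function, the $f_{k}$ are equimeasurable with $f$, uniformly bounded and supported in $B_{R}$; hence $f_{k}\to f^{\star}$ in measure and, by dominated convergence applied to $\int_{B_{R}}\Phi(|f_{k}-f^{\star}|/\lambda)\,dx$, also in $L^{\Phi}(\mathbb{R}^{n})$. Iterating the arbitrary-hyperplane form of Theorem~\ref{2.1} gives $\mathcal{E}_{\phi}(f_{k})\le\mathcal{E}_{\phi}(f)$ for every $k$, so it remains to establish the lower semicontinuity $\mathcal{E}_{\phi}(f^{\star})\le\liminf_{k\to\infty}\mathcal{E}_{\phi}(f_{k})$.

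In view of \eqref{1a}, writing $\mathcal{E}_{\phi}(g)^{-n}=|B_{\phi}(g)|=\tfrac1n\int_{S^{n-1}}\|\nabla_{v}g\|_{\phi}^{-n}\,dv$, this reduces to $|B_{\phi}(f^{\star})|\ge\limsup_{k}|B_{\phi}(f_{k})|$, which I would obtain from two ingredients. The first is a pointwise-in-$v$ lower semicontinuity
\begin{equation*}
\|\nabla_{v}f^{\star}\|_{\phi}\le\liminf_{k\to\infty}\|\nabla_{v}f_{k}\|_{\phi}\qquad(v\in S^{n-1}).
\end{equation*}
Indeed, passing to a subsequence realizing the lower limit, the elementary bound $\|\nabla_{v}f_{k}\|_{\phi}\le\bigl\|\,|\nabla f_{k}|\,\bigr\|_{\Phi}\le\bigl\|\,|\nabla f|\,\bigr\|_{\Phi}$ (from $\phi(t)\le\Phi(|t|)$, $|\nabla_{v}f_{k}|\le|\nabla f_{k}|$, and the classical Euclidean Orlicz P\'olya-Szeg\"o inequality, iterated) keeps $\nabla_{v}f_{k}$ bounded in the Orlicz class, while $f_{k}\to f^{\star}$ in $L^{\Phi}$ forces $\nabla_{v}f_{k}\to\nabla_{v}f^{\star}$ in the sense of distributions; a lower semicontinuity argument for the convex Luxemburg functional $g\mapsto\|g\|_{\phi}$ then yields the inequality. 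The second ingredient is a uniform two-sided estimate
\begin{equation*}
0<c\le\|\nabla_{v}f_{k}\|_{\phi}\le C<\infty\qquad\text{for all }k\text{ and all }v\in S^{n-1},
\end{equation*}
whose upper half is the bound just recorded and whose lower half follows by a one-dimensional argument: on $\mathcal{L}^{n-1}$-a.e. line parallel to $v$ the compactly supported restriction of $f_{k}$ satisfies $\int|\nabla_{v}f_{k}|\ge 2\max|f_{k}|$, so that Jensen's inequality for $\Phi$ together with Fubini and $\|f_{k}\|_{L^{1}}=\|f\|_{L^{1}}>0$ (the case $f\equiv0$ being trivial) bounds $\int_{\mathbb{R}^{n}}\phi(\nabla_{v}f_{k}/\lambda)\,dx$ from below by a multiple of $1/\lambda$ with constants depending only on $R$, $\Phi$ and $f$. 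Granting these, $\|\nabla_{v}f^{\star}\|_{\phi}^{-n}\ge\limsup_{k}\|\nabla_{v}f_{k}\|_{\phi}^{-n}$ for each $v$ and the integrands are dominated by $c^{-n}$, so the reverse Fatou lemma gives
\begin{equation*}
|B_{\phi}(f^{\star})|=\frac1n\int_{S^{n-1}}\|\nabla_{v}f^{\star}\|_{\phi}^{-n}\,dv\ \ge\ \limsup_{k\to\infty}\frac1n\int_{S^{n-1}}\|\nabla_{v}f_{k}\|_{\phi}^{-n}\,dv=\limsup_{k\to\infty}|B_{\phi}(f_{k})|.
\end{equation*}
Hence $\mathcal{E}_{\phi}(f^{\star})\le\liminf_{k}\mathcal{E}_{\phi}(f_{k})\le\mathcal{E}_{\phi}(f)$, which is \eqref{2j}.

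The main obstacle I anticipate is the pointwise lower semicontinuity $\|\nabla_{v}f^{\star}\|_{\phi}\le\liminf_{k}\|\nabla_{v}f_{k}\|_{\phi}$. Steiner symmetrization is not continuous on $W^{1,\Phi}$, and since the symmetrization directions $v_{k}$ differ from the fixed test direction $v$ the numbers $\|\nabla_{v}f_{k}\|_{\phi}$ are \emph{not} monotone in $k$; one therefore cannot simply pass the gradients to the limit, but must run a genuine weak-compactness and lower-semicontinuity argument in the Orlicz setting, with extra care needed because $L^{\Phi}$ is in general non-reflexive for $\Phi$ induced by an arbitrary $\phi\in\mathcal{N}$. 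It is precisely here that the slicing techniques of Cianchi and Fusco underlying Theorem~\ref{2.1} re-enter the proof. A secondary, more technical point is ensuring that the lower constant $c$ in the two-sided estimate is independent of both $k$ and $v$.
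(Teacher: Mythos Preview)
Your overall strategy---iterate Theorem~\ref{2.1} along a sequence of Steiner symmetrizations approximating $f^{\star}$ and then pass to the limit via a lower-semicontinuity argument for $|B_{\phi}(\cdot)|$---is exactly the paper's approach. The paper packages the limit step as Lemma~\ref{2x} (Blaschke selection on the bodies $B_{\phi}(f_{k})$ combined with lower semicontinuity of $\int\phi(u_{0}\cdot\nabla g/\lambda)\,dx$ under weak $W^{1,1}$ convergence), and then the one-line proof is: Theorem~\ref{2.1} $+$ Remark~\ref{19b} $+$ Lemma~\ref{2x}. Your reverse-Fatou argument with the uniform two-sided bound on $\|\nabla_{v}f_{k}\|_{\phi}$ is a valid alternative to Blaschke selection; the lower bound you sketch is precisely Lemma~\ref{1d} (note that $\int f_{k}=\int f$, $|\Omega_{k}|=|\Omega|$, and all $\Omega_{k}$ sit in a fixed ball), so that part is fine.

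The difficulty you anticipate, however, is largely self-imposed. You do \emph{not} need any weak compactness in $L^{\Phi}$, nor do the Cianchi--Fusco slicing arguments re-enter here. Since $W_{0}^{1,\Phi}(\Omega)\subset W_{0}^{1,1}(\Omega)$ (Lemma~\ref{6g}), Burchard's approximation result (Proposition~\ref{19f}, recorded as Remark~\ref{19b}) gives directly $f_{k}\rightharpoonup f^{\star}$ \emph{weakly in} $W^{1,1}(\mathbb{R}^{n})$. In particular $\nabla_{v}f_{k}=v\cdot\nabla f_{k}\rightharpoonup v\cdot\nabla f^{\star}$ weakly in $L^{1}$, and the standard lower semicontinuity of convex integrals under weak $L^{1}$ convergence (as in \cite{Evans90}) yields $\int\phi(\nabla_{v}f^{\star}/\lambda)\le\liminf_{k}\int\phi(\nabla_{v}f_{k}/\lambda)$ for every $\lambda>0$, hence $\|\nabla_{v}f^{\star}\|_{\phi}\le\liminf_{k}\|\nabla_{v}f_{k}\|_{\phi}$. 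This closes the gap cleanly and makes the non-reflexivity of $L^{\Phi}$ irrelevant. A minor slip: your claim that the $f_{k}$ are ``uniformly bounded'' (in $L^{\infty}$) is unjustified---$f\in W_{0}^{1,\Phi}(\Omega)$ need not be bounded---but you never actually need $L^{\Phi}$ convergence of the $f_{k}$; weak $W^{1,1}$ convergence of the sequence is all that is used.
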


When $\phi(t)=(1-\lambda)(t)_+^p+\lambda(t)_-^p$, where $p>1$, $\lambda\in[0,1]$, $(t)_+:=\max\{t,0\}$ and $(t)_-:=\max\{-t,0\}$, the affine Orlicz P\'olya-Szeg\" o principle becomes the general affine P\'olya-Szeg\" o-type principle established in \cite{Nguyen16}. The symmetric affine P\'olya-Szeg\" o principle \cite{CLYZ09} and the asymmetric affine P\'olya-Szeg\" o principle \cite{HSX12} correspond to the cases of $\lambda=1/2$ and $\lambda=0$, respectively.

In order to state our result about the equality case in (\ref{2e}), we need some assumptions on $f$ and $\Omega$. Consider $f$ first, and set
$$M_f(x^{\prime})=\inf\{t>0:\;\mu_f(x^{\prime},t)=0\}\;\;{\rm for}\;x^{\prime}\in\pi_{n-1}(\Omega).$$
Obviously, $M_f(x^{\prime})$ agrees with ${\rm ess}\sup\{f(x^{\prime},y):\;y\in\Omega_{x^{\prime}}\}$ for $\mathcal{L}^{n-1}$-a.e. $x^{\prime}\in\pi_{n-1}(\Omega)$. Moreover, $M$ is a measurable function in $\pi_{n-1}(\Omega)$ with $M_f(x^{\prime})<\infty$ for $\mathcal{L}^{n-1}$-a.e. $x^{\prime}\in\pi_{n-1}(\Omega)$, owing to (\ref{2g}). We demand that, for $\mathcal{L}^{n-1}$-a.e. $x^{\prime}\in\pi_{n-1}(\Omega)$, $M_f(x^{\prime})>0$ and that the derivative of the restriction $f(x^{\prime},\cdot)$ is $\mathcal{L}^1$-a.e. different from $0$ in the set where $f(x^{\prime},\cdot)<M_f(x^{\prime})$. This is equivalent to the condition
\begin{eqnarray}\label{2i}
\mathcal{L}^n(\{(x^{\prime},y)\in\Omega:\nabla_yf(x^{\prime},y)=0\}&\cap&\{(x^{\prime},y)\in\Omega:\;\;M_f(x^{\prime})=0\nonumber\\
&&\;{\rm or}\;f(x^{\prime},y)<M_f(x^{\prime})\})=0.
\end{eqnarray}
As far as $\Omega$ is concerned, we require that
\begin{eqnarray}\label{2b}
\pi_{n-1}(\Omega)\;{\rm is\;connected},
\end{eqnarray}
and that, in a sense, the reduced boundary $\partial^{\ast}\Omega$ of $\Omega$ is almost nowhere parallel to the $y$-axis inside the open cylinder $\pi_{n-1}(\Omega)\times\mathbb{R}_y$. A precise formulation of the last condition reads
\begin{eqnarray}\label{2h}
&&\Omega\;{\rm has\;locally\;finite\;perimeter\;in}\;\pi_{n-1}(\Omega)\times\mathbb{R}_y\;{\rm and}\nonumber\\
&&\mathcal{H}^{n-1}\left(\{(x^{\prime},y)\in\partial^{\ast}\Omega:v_y^{\Omega}(x^{\prime},y)=0\}\cap(\pi_{n-1}(\Omega)\times\mathbb{R}_y)\right)=0,
\end{eqnarray}
where $\mathcal{H}^k$ stands for $k$-dimensional Hausdorff measure, and $v^{\Omega}_y$ denotes the component along the $y$-axis of the generalized inner normal $v^{\Omega}$ to $\Omega$ (see Section \ref{ss1} for definitions).

We are now ready to state our result about the equality case in (\ref{2e}).

\begin{thm}\label{2.3}
Let $\Omega$ be a bounded open subset of $\mathbb{R}^n$ fulfilling {\rm(\ref{2b})--(\ref{2h})} and let $f$ be a nonnegative function from $W_0^{1,\Phi}(\Omega)$ satisfying {\rm (\ref{2i})} and $\phi\in\mathcal{N}_s$. Then
\begin{eqnarray}\label{2r}
\mathcal {E}_{\phi}(f)=\mathcal {E}_{\phi}(f^s)
\end{eqnarray}
if and only if there exist $A\in SL(n)$ and $x_0\in\mathbb{R}^n$ such that
\begin{eqnarray}\label{1b}
f(x)=f^s(Ax+x_0)\;\;for\;\mathcal{L}^n\text{-}a.e.\;x\in\Omega.
\end{eqnarray}
\end{thm}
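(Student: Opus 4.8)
\emph{The ``if'' direction.} If $f(x)=f^s(Ax+x_0)$ for $\mathcal{L}^n$-a.e.\ $x\in\Omega$ with $A\in SL(n)$ and $x_0\in\mathbb{R}^n$, then, since $|B_\phi(\cdot)|$, and hence $\mathcal{E}_\phi$, is invariant under every affine map $x\mapsto Ax+x_0$ with $A\in SL(n)$ (as noted after \eqref{52}), we obtain $\mathcal{E}_\phi(f)=\mathcal{E}_\phi\big(f^s(A\cdot+x_0)\big)=\mathcal{E}_\phi(f^s)$, which is \eqref{2r}.

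\emph{The ``only if'' direction --- strategy.} The plan is to retrace the proof of Theorem \ref{2.1} and impose equality along it. Rewriting \eqref{2e} as $|B_\phi(f^s)|\ge|B_\phi(f)|$, that proof --- carried out in the spirit of Cianchi--Fusco \cite{CF06} --- establishes this by slicing $\Omega$ and $f$ over $\pi_{n-1}(\Omega)$, running a one-dimensional rearrangement argument on $\mathcal{L}^{n-1}$-a.e.\ fiber $\{x'\}\times\mathbb{R}_y$, recombining by Fubini's theorem, and comparing the two Orlicz--Sobolev balls after a volume-preserving linear normalization; the decisive analytic ingredient is an application of Jensen's inequality for the convex function $\phi$ on each fiber. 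If equality holds in \eqref{2e}, then every inequality used must be an equality; in particular the fiberwise Jensen inequalities are equalities.

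\emph{The ``only if'' direction --- rigidity.} Because $\phi\in\mathcal{N}_s$ is \emph{strictly} convex, equality in the fiberwise Jensen step is rigid: for $\mathcal{L}^{n-1}$-a.e.\ $x'\in\pi_{n-1}(\Omega)$ it forces $f(x',\cdot)$ to coincide, up to a translation, with its symmetric decreasing rearrangement $f^s(x',\cdot)$. This is the exact point at which hypothesis \eqref{2i} enters: it is precisely the condition that the critical set of $f(x',\cdot)$ below the level $M_f(x')$ be $\mathcal{L}^1$-negligible for a.e.\ $x'$, and critical points below the maximum would be the only mechanism allowing equality without this translation structure. Consequently there is a measurable $c\colon\pi_{n-1}(\Omega)\to\mathbb{R}$ with $f(x',y)=f^s(x',y-c(x'))$ for $\mathcal{L}^n$-a.e.\ $(x',y)\in\Omega$. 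That $c$ is affine is then obtained as in \cite{CF06}: differentiating this identity in $x'$, and using \eqref{2i} once more so that $\nabla_y f^s$ is a.e.\ nonzero below the maximum, shows $\nabla c$ to be locally constant; connectedness of $\pi_{n-1}(\Omega)$, assumption \eqref{2b}, promotes this to $\nabla c\equiv a$ for some fixed $a\in\mathbb{R}^{n-1}$; and assumption \eqref{2h}, that $\partial^{\ast}\Omega$ is almost nowhere parallel to the $y$-axis, ensures --- through the equivalence of $(\mathcal{S}_f)^s$ with $\mathcal{S}_{f^s}$ from \eqref{2m} --- that $\Omega$ is equivalent to the $y$-shear of $\Omega^s$ determined by $a$, so that no compatibility obstruction remains. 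Hence $c(x')=\langle a,x'\rangle+b$ for some $b\in\mathbb{R}$, and with $Ax:=(x',x_n-\langle a,x'\rangle)\in SL(n)$ and $x_0:=(0,\dots,0,-b)$ --- composed, if need be, with the normalization from the previous step --- we get $f(x)=f^s(Ax+x_0)$ for $\mathcal{L}^n$-a.e.\ $x\in\Omega$, that is \eqref{1b}.

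\emph{The main obstacle} is the equality analysis itself, in two places. The harder one is to deduce, from equality in the fiberwise Jensen inequality and \emph{under the sharp one-variable hypothesis \eqref{2i}} (rather than a global non-degeneracy of $\nabla f$), that each one-dimensional restriction is a pure translate of its rearrangement, and then to glue the slicewise translations $c(x')$ into a single affine map; this requires transporting the delicate measure-theoretic arguments of Cianchi and Fusco concerning $\mu_f$, $M_f$ and the reduced boundary $\partial^{\ast}\Omega$ to the Orlicz integrand $\Phi$, and strict convexity of $\phi$ is indispensable here. A secondary point is to confirm that the linear normalization in the proof of Theorem \ref{2.1} is genuinely volume preserving, so that the final $A$ belongs to $SL(n)$ and not merely to $GL(n)$.
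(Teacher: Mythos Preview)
Your overall strategy is sound: equality in \eqref{2e} propagates to equality in the fiberwise Jensen step, and strict convexity of $\phi$ together with \eqref{2i} forces each one-dimensional level set to be a single interval, so that $f(x',\cdot)$ is a translate of $f^s(x',\cdot)$ by some $c(x')$. But your argument for why $c$ must be \emph{affine} has a genuine gap. Differentiating the identity $f(x',y)=f^s(x',y-c(x'))$ in $x'$ and dividing by $\nabla_y f^s$ merely \emph{expresses} $\nabla c$ in terms of other gradients; it does not show $\nabla c$ is locally constant. In the Euclidean case of \cite{CF06} the constancy of $c$ comes from a separate equality constraint, specific to the integrand $|\nabla f|$, that forces $\nabla_{x'}f$ to agree at the two endpoints of each fiber. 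The affine version is more delicate. The proof of Theorem~\ref{2.1} proceeds by showing $(B_\phi(f))^s\subset B_\phi(f^s)$; equality of volumes then forces $(B_\phi(f))^s=B_\phi(f^s)$, and tracing the Jensen step \eqref{3q} for each pair $(x_0',\eta_1),(x_0',-\eta_2)$ on $\partial B_\phi(f)$ with $\eta_1+\eta_2\neq 0$ yields, at the endpoints $y_1(x',t),y_2(x',t)$, not only $|\nabla_y f(x',y_1)|=|\nabla_y f(x',y_2)|$ but also
\[
\frac{\nabla_{x'}f(x',y_1)}{|\nabla_y f(x',y_1)|}-\frac{\nabla_{x'}f(x',y_2)}{|\nabla_y f(x',y_2)|}=z_0',
\]
where the decisive point is that $z_0'\in\mathbb{R}^{n-1}$ is a \emph{constant vector independent of $(x',t)$}; this constancy is obtained by letting $x_0'$ range over all directions in $\mathbb{R}^{n-1}$ (Lemma~\ref{L1}). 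The paper then invokes the Cianchi--Fusco midpoint lemma (Lemma~\ref{L2}) to get $b=\tfrac12(y_1+y_2)\in W^{1,1}_{\rm loc}$, and runs a geometric-measure-theory argument on $\partial^\ast\mathcal{S}_f$ (via Theorems~\ref{T1}, \ref{2t}, \ref{2w}) to identify $\nabla b\equiv-\tfrac12 z_0'$, whence \eqref{2b} gives $b$ affine. Your proposal does not locate this source of the affine rigidity and so does not explain why $\nabla c$ should be globally constant rather than an arbitrary $W^{1,1}_{\rm loc}$ function.

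A smaller misreading: there is no ``linear normalization'' in the proof of Theorem~\ref{2.1}. The inequality $|B_\phi(f^s)|\ge|B_\phi(f)|$ comes directly from the set inclusion $(B_\phi(f))^s\subset B_\phi(f^s)$ and the fact that Steiner symmetrization preserves volume, so your secondary concern about $SL(n)$ versus $GL(n)$ is moot. The matrix $A$ in \eqref{1b} is simply the shear with block form $\bigl(\begin{smallmatrix}E_{n-1}&0\\ -z_0'&1\end{smallmatrix}\bigr)$, which has determinant one by inspection (Lemma~\ref{L5}).
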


By Theorem \ref{2.3} and some delicate analyses,  we can characterize  the case of equality in (\ref{2j}). In particular we establish a Brothers-Ziemer type theorem for the affine Orlicz P\'olya-Szeg\" o principle. We demand
\begin{eqnarray}\label{2y}
&&\Omega\;{\rm is\;a\;set\;of\;finite\;perimeter\;in}\;\mathbb{R}^n
\end{eqnarray}
and
\begin{eqnarray}\label{2z}
\mathcal{L}^n\left(\left\{x\in \Omega:\;\nabla f(x)=0\; {\rm and}\;0\leq f(x)<{\rm ess}\sup f\right\}\right)=0.
\end{eqnarray}

\begin{thm}\label{2.4}
 Let $\Omega$ be a bounded and connected open subset of $\mathbb{R}^n$ fulfilling {\rm (\ref{2y})} and let $f$ be a nonnegative function from $W_0^{1,\Phi}(\Omega)$ satisfying {\rm (\ref{2z})} and $\phi\in\mathcal{N}_s$.  Then
\begin{eqnarray}
\mathcal {E}_{\phi}(f)=\mathcal {E}_{\phi}(f^{\star})
\end{eqnarray}
if and only if there exist $A\in SL(n)$ and $x_0\in\mathbb{R}^n$ such that
\begin{eqnarray}
f(x)=f^{\star}(Ax+x_0)\;\;for\;\mathcal{L}^n\text{-}a.e.\;x\in\Omega.
\end{eqnarray}
\end{thm}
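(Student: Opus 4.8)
The plan is to derive Theorem~\ref{2.4} from Theorem~\ref{2.3} by exploiting the classical approximation of Schwarz symmetrization by iterated Steiner symmetrizations, together with a careful bookkeeping of the equality situation. First I would recall the standard fact (see, e.g., the references on convergence of Steiner symmetrization cited in Section~\ref{s2}) that there is a sequence of directions $v_1,v_2,\dots\in S^{n-1}$ such that, writing $f^{(0)}=f$ and $f^{(k)}=(f^{(k-1)})^{s_{v_k}}$ for the Steiner rearrangement of $f^{(k-1)}$ about the hyperplane $v_k^{\perp}$, one has $f^{(k)}\to f^{\star}$ in $L^{\Phi}$ (and, along a subsequence, the associated Orlicz-Sobolev balls converge so that $\mathcal{E}_{\phi}(f^{(k)})\to\mathcal{E}_{\phi}(f^{\star})$). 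Combining this with Theorem~\ref{2.1} applied in each direction $v_k$ gives the monotone chain
\begin{eqnarray}\label{pf-chain}
\mathcal{E}_{\phi}(f)\geq\mathcal{E}_{\phi}(f^{(1)})\geq\mathcal{E}_{\phi}(f^{(2)})\geq\cdots\geq\mathcal{E}_{\phi}(f^{\star}),
\end{eqnarray}
which already re-proves Theorem~\ref{2.2} and, more importantly, shows that equality $\mathcal{E}_{\phi}(f)=\mathcal{E}_{\phi}(f^{\star})$ forces $\mathcal{E}_{\phi}(f^{(k-1)})=\mathcal{E}_{\phi}(f^{(k)})$ for every $k$; that is, equality must hold at \emph{every} single Steiner step.

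The ``if'' direction is the easy half: if $f(x)=f^{\star}(Ax+x_0)$ for some $A\in SL(n)$ and $x_0$, then since $|B_\phi(\cdot)|$ was noted to be invariant under the affine maps $x\mapsto Ax+x_0$ with $A\in SL(n)$, we get $\mathcal{E}_{\phi}(f)=\mathcal{E}_{\phi}(f^\star)$ immediately. For the ``only if'' direction I would run the following argument. Fix a direction, say $v_1=e_n$, so $\{y=0\}$ plays the role of the symmetrization hyperplane. One first checks that the hypotheses~(\ref{2y}) and~(\ref{2z}) of Theorem~\ref{2.4}, which are stated in the full $n$-dimensional (rotation-invariant) form, imply the directional hypotheses~(\ref{2b})--(\ref{2h}) and~(\ref{2i}) of Theorem~\ref{2.3} for $\mathcal{L}^{n-1}$-a.e.\ choice of the remaining coordinate directions — here one uses that $\Omega$ is connected and of finite perimeter in $\mathbb{R}^n$ (so that, by a Fubini-type slicing of the perimeter and a.e.\ choice of hyperplane, $\pi_{n-1}(\Omega)$ is connected and~(\ref{2h}) holds), and that the full-gradient condition~(\ref{2z}) self-improves to the partial-derivative condition~(\ref{2i}) for a.e.\ direction. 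Then Theorem~\ref{2.3} applies to the first Steiner step and yields $A_1\in SL(n)$, $x_0^{(1)}\in\mathbb{R}^n$ with $f(x)=f^{(1)}(A_1x+x_0^{(1)})$ a.e. The crux is then to propagate this: one must show that $f^{(1)}$ still satisfies the hypotheses needed to apply Theorem~\ref{2.3} at the \emph{second} Steiner step, conclude $f^{(1)}$ and $f^{(2)}$ differ by an $SL(n)$ map, and so on; and finally to check that an infinite product of such maps, combined with $f^{(k)}\to f^{\star}$, forces $f$ itself to equal $f^{\star}$ composed with a \emph{single} special affine map.

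The main obstacle — and where the delicate analysis concentrates — is exactly this propagation and passage to the limit. There are two separate difficulties. The first is that a Steiner rearrangement need not preserve the structural hypotheses~(\ref{2b})--(\ref{2h}),(\ref{2i}): even if $f$ satisfies~(\ref{2z}), the rearranged function $f^{(1)}$ could a priori develop a plateau (a set of positive measure on which $\nabla_y f^{(1)}=0$ while $f^{(1)}<\mathrm{ess\,sup}$) in the \emph{new} direction, which would block the inductive use of Theorem~\ref{2.3}. One handles this by observing that the equality $f(x)=f^{(1)}(A_1 x+x_0^{(1)})$ from the first step already transfers the regularity of $f$ to $f^{(1)}$ up to an affine change of variables: since $A_1\in SL(n)$ preserves sets of measure zero and maps the critical set of $f^{(1)}$ to (an affine image of) that of $f$, condition~(\ref{2z}) for $f$ yields the analogous condition for $f^{(1)}$, and one re-runs the a.e.-direction slicing argument. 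The second difficulty is the limit: one accumulates maps $A_1,A_2,\dots\in SL(n)$ with $f=f^{(k)}\circ(B_k\,\cdot+c_k)$ where $B_k=A_k\cdots A_1$, and must show $\{B_k\}$ (and $\{c_k\}$) stays bounded and converges — using that $f^{(k)}\to f^{\star}$ and that $f^{\star}$ is radially symmetric with a nondegenerate level structure inherited from~(\ref{2z}), so that any subsequential limit $B_\infty\in SL(n)$ satisfies $f=f^{\star}\circ(B_\infty\,\cdot+c_\infty)$; a compactness/rigidity argument (a radial function composed with a special linear map determines the map up to rotations of the domain, which are irrelevant since they fix $f^{\star}$) then pins down a single pair $(A,x_0)$ as required. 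I would follow Cianchi--Fusco~\cite{CF06} and Wang~\cite{Wang13} closely for the structure of this limiting argument, since, as the introduction emphasizes, their techniques are the backbone here.
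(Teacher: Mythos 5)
Your overall strategy differs fundamentally from the paper's, and the two places you yourself flag as ``the crux'' are genuine gaps rather than omitted routine details. First, applying Theorem \ref{2.3} at the $k$-th Steiner step requires the directional hypotheses (\ref{2h}) and (\ref{2i}) for $f^{(k-1)}$ in the \emph{specific} direction $v_k$ supplied by the approximation result (Proposition \ref{19f}). Your observation that (\ref{2z}) transfers to $f^{(k-1)}$ through the affine relation controls only the full gradient; the directional condition (\ref{2i}) and the perimeter condition (\ref{2h}) hold merely for $\mathcal{H}^{n-1}$-a.e.\ direction, and nothing guarantees that the prescribed $v_k$ lies in that good set (for $k\geq 2$ the good set moreover depends on the unknown maps $A_1,\dots,A_{k-1}$). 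Closing this would require a version of Proposition \ref{19f} in which the symmetrization directions can be chosen adaptively from an arbitrary prescribed full-measure subset of $S^{n-1}$, which you do not establish. Second, the passage to the limit --- boundedness of the accumulated maps $B_k=A_k\cdots A_1$ in $SL(n)$ and nondegeneracy of their subsequential limits --- is asserted via a ``compactness/rigidity argument'' but never carried out; a priori the $B_k$ could have singular values tending to $0$ and $\infty$ while keeping determinant one, and ruling this out needs a quantitative argument on the level sets that is comparable in difficulty to the theorem itself.

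The paper avoids both obstacles by arguing contrapositively. Lemma \ref{16b} characterizes the property $f=f^{\star}(A\cdot+x_0)$ geometrically: it holds iff, for a dense set of directions $u$, almost every line parallel to $u$ meets $\mathcal{S}_f$ in a segment and the midpoints of these segments lie in an affine subspace (this rests on the convexity Lemma \ref{5c} and the ellipsoid characterizations in Theorem \ref{5f} and Lemma \ref{5g}). If $f$ is \emph{not} such an affine image, these conditions fail on a whole spherical cap $B(u_0,\delta)\cap S^{n-1}$; the measure-theoretic Lemmas \ref{7g} and \ref{L4} then produce a single direction $\bar u$ in that cap for which (\ref{2h}) and (\ref{2i}) do hold, so Theorem \ref{2.3} forces the strict inequality $\mathcal{E}_{\phi}(f)>\mathcal{E}_{\phi}(f_{\bar u}^s)$ at the very first step. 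After that, only the non-strict Theorem \ref{2.1} along an arbitrary approximating sequence and the semicontinuity Lemma \ref{2x} are needed, so no equality information, no propagation of hypotheses, and no control of a sequence of affine maps is ever required. If you wish to salvage your forward induction you must at minimum prove the adaptive-direction approximation result and the $SL(n)$-compactness step; the contrapositive route is substantially shorter.
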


\section{Background and Preliminaries}\label{s3}
\subsection{On the Brunn-Minkowski theory of convex bodies.}
In this section we fix our notation and collect basic facts from convex geometric analysis.
General references for the theory of convex bodies are the books by  Gardner \cite{Ga06}, Gruber \cite{Gruber07}, Schneider \cite{Schneider13}.

 We write $\mathcal{K}^n$ for the set of convex bodies (compact convex subsets) of $\mathbb{R}^n$. We write $\mathcal{K}^n_o$ for the set of convex bodies that contain the origin in their interiors. For $K\in \mathcal{K}^n$, let $h(K;\cdot)=h_K:\mathbb{R}^n\rightarrow\mathbb{R}$ denote the {\it support function} of $K$; i.e.,
$$h(K;x):=\max\{x\cdot z:\;z\in K\}.$$
For $K\in \mathcal {K}_o^n$, its {\it gauge function} $g_K:\mathbb{R}^n\rightarrow[0,\infty)$ is defined by
\begin{eqnarray}\label{41b}
g_K(x):=\|x\|_K=\inf\{\lambda>0:\;x\in\lambda K\}.
\end{eqnarray}

If $K\in\mathcal {K}_o^n$, then the {\it polar body} $K^{\ast}$ is defined by
$$K^{\ast}:=\{x\in\mathbb{R}^n:x\cdot z\leq 1\;{\rm for\;all}\;z\in K\}.$$

If $K\in\mathcal{K}_o^n$, it is well-known that
\begin{eqnarray}\label{41a}
g_K=h_{K^{\ast}}.
\end{eqnarray}

By (\ref{41b}), for $x\in\mathbb{R}^n$ and $K\in\mathcal{K}_o^n$,  it follows immediately that
\begin{eqnarray}\label{41c}
g_K(x)=1\;\;{\rm if\;and\;only\;if}\;\;x\in\partial K.
\end{eqnarray}

For $K,L\in\mathcal{K}^n$, the {\it Hausdorff distance} of $K$ and $L$ is defined by
\begin{eqnarray}
\delta(K,L):=\sup_{u\in S^{n-1}}|h_{K}(u)-h_{L}(u)|.
\end{eqnarray}

When considering the convex body $K\in\mathcal {K}_o^n$ as $K\subset \mathbb{R}^{n-1}\times\mathbb{R}_y$,  the {\it Steiner symmetral}, $K^s$, of $K$ in the direction $e_n$ is given by
\begin{eqnarray}\label{44}
K^s=\left\{\left(x^{\prime},\frac{1}{2}y_1+\frac{1}{2}y_2\right)\in\mathbb{R}^{n-1}\times\mathbb{R}:(x^{\prime},y_1),(x^{\prime},-y_2)\in K\right\}.
\end{eqnarray}

In this paper, we shall make use of the following fact that follows directly from (\ref{44}) and (\ref{41c}).
\begin{lem}\label{2f}
Suppose $K,L\in\mathcal {K}_o^n$ and consider $K,L\subset\mathbb{R}^{n-1}\times\mathbb{R}$. Then
$$K^s\subset L,$$
if and only if
$$\|(x_0^{\prime},\eta_1)\|_K=1=\|(x_0^{\prime},-\eta_2)\|_{K},\;\; with\;\eta_1\neq-\eta_2\;\;\Longrightarrow \;\;\|(x^{\prime},\eta_1/2+\eta_2/2)\|_{L}\leq 1.$$
In addition, if $K^s=L$, then $\|(x_0^{\prime},\eta_1)\|_K=1=\|(x_0^{\prime},-\eta_2)\|_K$ with $\eta_1\neq-\eta_2$ implies that $\|(x_0^{\prime},\eta_1/2+\eta_2/2)\|_L= 1$.
\end{lem}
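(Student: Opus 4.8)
The plan is to verify the two implications directly from the definition \eqref{44} of the Steiner symmetral of a convex body, from the boundary characterization \eqref{41c} of the gauge, and from the elementary equivalence $x\in M\iff \|x\|_M\le 1$ (immediate from \eqref{41b}, since $M\in\mathcal{K}_o^n$ is closed with the origin in its interior). For the ``only if'' direction I would assume $K^s\subset L$ and take a pair with $\|(x_0',\eta_1)\|_K=1=\|(x_0',-\eta_2)\|_K$ and $\eta_1\ne-\eta_2$. By \eqref{41c} the points $(x_0',\eta_1)$ and $(x_0',-\eta_2)$ lie on $\partial K$, hence in $K$, so taking $y_1=\eta_1$, $y_2=\eta_2$ in \eqref{44} gives $(x_0',\tfrac12\eta_1+\tfrac12\eta_2)\in K^s\subset L$, that is, $\|(x_0',\eta_1/2+\eta_2/2)\|_L\le1$. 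This direction is immediate.

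For the ``if'' direction I would fix $x'$ in the (nonempty) open set ${\rm int}\,\pi_{n-1}(K)$ and examine the vertical section $K_{x'}=[a(x'),b(x')]$. Viewing $a$ as a convex and $b$ as a concave function on $\pi_{n-1}(K)$, one checks that $a(x')<b(x')$ for $x'$ interior to the projection (otherwise the nonnegative concave function $b-a$ would vanish identically and $K$ would be degenerate), and that the endpoints $(x',a(x'))$ and $(x',b(x'))$ lie on $\partial K$, hence have gauge $1$ by \eqref{41c}. Applying the hypothesis once with $(\eta_1,-\eta_2)=(b(x'),a(x'))$ and once with $(\eta_1,-\eta_2)=(a(x'),b(x'))$ — both admissible since $a(x')\ne b(x')$ — yields $(x',\pm(b(x')-a(x'))/2)\in L$, so convexity of $L$ forces the whole segment between these two points into $L$; but that segment is exactly $(K^s)_{x'}$ by \eqref{44}. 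As $x'$ ranges over ${\rm int}\,\pi_{n-1}(K)$ these sections cover ${\rm int}\,K^s$ (interior points of $K^s$ project into the interior of $\pi_{n-1}(K^s)=\pi_{n-1}(K)$), so ${\rm int}\,K^s\subset L$, and since $L$ is closed and $K^s$ is the closure of its interior we conclude $K^s\subset L$.

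For the final assertion I would assume $K^s=L$ and, given a pair with $\|(x_0',\eta_1)\|_K=1=\|(x_0',-\eta_2)\|_K$ and $\eta_1\ne-\eta_2$, set $p=(x_0',\eta_1/2+\eta_2/2)\in L=K^s$ (by the ``only if'' part); it then suffices to prove $p\in\partial K^s$. If $x_0'\in\partial\pi_{n-1}(K)$, an outer normal $u'$ to $\pi_{n-1}(K)=\pi_{n-1}(K^s)$ at $x_0'$ produces an outer normal $(u',0)$ to $K^s$ at every point of $K^s$ on the line $\{x_0'\}\times\mathbb{R}$, so $p\in\partial K^s$ automatically; if $x_0'\in{\rm int}\,\pi_{n-1}(K)$, then the only boundary points of $K$ on that line are the two section endpoints, so $\{\eta_1,-\eta_2\}=\{a(x_0'),b(x_0')\}$ and $\eta_1/2+\eta_2/2=\pm(b(x_0')-a(x_0'))/2$, the endpoints of $(K^s)_{x_0'}$, which again lie on $\partial K^s$. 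In either case \eqref{41c} gives $\|p\|_L=1$. I expect the ``if'' part to be the main obstacle: since the hypothesis only records boundary behavior of the gauge, one must first recover the geometric facts that the section endpoints are boundary points and that the sections are nondegenerate over ${\rm int}\,\pi_{n-1}(K)$, and then lean on convexity and closedness of $L$ to absorb the low-dimensional sections over $\partial\pi_{n-1}(K)$ instead of invoking the hypothesis there.
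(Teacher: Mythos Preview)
Your proof is correct. The paper does not actually supply a proof of this lemma: it merely asserts that the statement ``follows directly from \eqref{44} and \eqref{41c}'', and your argument is precisely a careful unpacking of that assertion using those two ingredients together with the elementary fact $x\in M\iff\|x\|_M\le1$ for $M\in\mathcal{K}_o^n$.
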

\subsection{On the theory of Sobolev functions}
In this section, we review some basic definitions and facts about Sobolev functions and functions of bounded variation on $\mathbb{R}^n$.  Good general references for this are Ambrosio, Fusco and Pallara \cite{AFP00}, Cianchi and Fusco \cite{CF06}, Evans and Gariepy \cite{EG92}, Ma${\rm z}^{\prime}$ya \cite{Ma85} and Ziemer \cite{Ziemer89}.

\subsubsection{On Orlicz-Sobolev functions}

  Let $\Omega$ be an open subset of $\mathbb{R}^n$ and let $\phi\in\mathcal{N}$. Let $\Phi(t)=\max\{\phi(t),\phi(-t)\}$, $t\in[0,\infty)$. The Orlicz space $L^{\Phi}(\Omega)$ is defined as
\begin{eqnarray}\label{33a}
L^{\Phi}(\Omega)&:=&\left\{f:\;f \;{\rm is\;a\;Lebesgue\;measurable\;real\;valued\;function\;on}\;\Omega\right.\nonumber\\
&~&\left.{\rm such\;that}\;\int_{\Omega}\Phi\left(\frac{|f(x)|}{\lambda}\right)dx<\infty\;{\rm for\;some\;}\lambda>0\right\}.
\end{eqnarray}
The Luxemburg norm $\|f\|_{L^{\Phi}(\Omega)}$ is defined as
\begin{eqnarray}\label{33}
\|f\|_{L^{\Phi}(\Omega)}:=\inf\left\{\lambda>0:\int_{\Omega}\Phi\left(\frac{|f(x)|}{\lambda}\right)dx\leq 1\right\}.
\end{eqnarray}
The space $L^{\Phi}(\Omega)$, equipped with the norm $\|\cdot\|_{L^{\Phi}(\Omega)}$, is a Banach space. Note that, if $\Phi(s)=s^p$ and $p> 1$, then $L^{\Phi}(\Omega)=L^p(\Omega)$, the usual $L^p$ space, and $\|\cdot\|_{L^{\Phi}(\Omega)}=\|\cdot\|_{L^{p}(\Omega)}$. Usually, we write $\|\cdot\|_{\Phi}$ instead of $\|\cdot\|_{L^{\Phi}(\Omega)}$.

The first order Orlicz-Sobolev space $W^{1,\Phi}(\Omega)$ is defined as
\begin{eqnarray}\label{6b}
W^{1,\Phi}(\Omega)=\{f\in L^{\Phi}(\Omega): f\;{\rm is}\;{\rm weakly}\;{\rm differentiable}\;{\rm and\;}|\nabla f|\in L^{\Phi}(\Omega)\}.
\end{eqnarray}
Here, $\nabla$ denotes the approximate
gradient (see the definition in (\ref{1c})). By $W^{1,\Phi}_{\rm loc}(\Omega)$ we denote the space of those functions which belong to $W^{1,\Phi}(\Omega^{\prime})$ for every open set $\Omega^{\prime}\Subset\Omega$.

The space $W^{1,\Phi}(\Omega)$, equipped with the norm
\begin{eqnarray}
\|f\|_{W^{1,\Phi}(\Omega)}=\|f\|_{\Phi}+\||\nabla f|\|_{\Phi},
\end{eqnarray}
is a Banach space. Clearly, $W^{1,\Phi}(\Omega)=W^{1,p}(\Omega)$, the standard Sobolev space, if $\Phi(s)=s^p$ with $p>1$.

We shall make use of the following trivial fact.
\begin{lem}\label{2q}
If $\phi\in\mathcal{N}$, then for $a,b\in\mathbb{R}$ and $a\neq 0$, the function
\begin{eqnarray}\label{6}
\Psi(t):=\phi(at-b)+\phi(-at-b),\;t>0
\end{eqnarray}
is  increasing. In addition, if $\phi\in \mathcal{N}_s$, then $\Psi(t)$ is strictly increasing.
\end{lem}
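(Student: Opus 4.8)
The plan is to prove Lemma \ref{2q} by reducing it to the monotonicity of the two constituent functions $t\mapsto\phi(at-b)$ and $t\mapsto\phi(-at-b)$ and exploiting convexity. Without loss of generality I would assume $a>0$ (if $a<0$ replace $t$ by $-t$ formally, or simply note that $\Psi$ depends on $a$ only through $|a|$ since the two summands get swapped). Write $u=at$, so that $u$ ranges over $(0,\infty)$ and it suffices to show $g(u):=\phi(u-b)+\phi(-u-b)$ is increasing in $u>0$; the strict version will follow if $\phi$ is strictly convex.

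First I would record the two elementary facts about $\phi\in\mathcal N$: it is convex with $\phi(0)=0$, hence $\phi$ is nonincreasing on $(-\infty,0]$ up to a point and nondecreasing on $[0,\infty)$ after a point — more precisely, since $\phi\ge0=\phi(0)$, the point $0$ is a minimizer, so $\phi$ is nondecreasing on $[0,\infty)$ and nonincreasing on $(-\infty,0]$. (The extra hypothesis that $\phi$ is \emph{strictly} decreasing on $(-\infty,0]$ or strictly increasing on $[0,\infty)$ is what will eventually be used for the strict conclusion together with $\mathcal N_s$.) The key step is then a convexity/slope comparison: for $u_1<u_2$ in $(0,\infty)$, I want
\[
\phi(u_2-b)-\phi(u_1-b)\ \ge\ \phi(-u_1-b)-\phi(-u_2-b).
\]
Since the four arguments $u_2-b>u_1-b$ and $-u_1-b>-u_2-b$ satisfy $(u_1-b)+(-u_1-b)=-2b=(u_2-b)+(-u_2-b)$ and the interval $[-u_2-b,\,-u_1-b]$ lies to the left of (or shares an endpoint pattern symmetric about $-b$ with) $[u_1-b,\,u_2-b]$ — both intervals have the same length $u_2-u_1$ — the inequality is exactly the statement that the increment of a convex function over an interval of fixed length is a nondecreasing function of the left endpoint. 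That monotonicity of increments is the standard consequence of convexity (the difference quotient $\frac{\phi(s+h)-\phi(s)}{h}$ is nondecreasing in $s$ for fixed $h>0$), and when $\phi$ is strictly convex the difference quotient is strictly increasing, giving strict inequality unless the two intervals coincide, which cannot happen for $u_1\ne u_2$.

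The main obstacle is the borderline case where $b$ is positioned so that one of the two intervals degenerates onto the flat part of $\phi$: if $\phi$ is merely in $\mathcal N$ (not $\mathcal N_s$), $\phi$ may be identically $0$ on some interval around $0$, and then for small $u$ both $\phi(u-b)$ and $\phi(-u-b)$ could be constant, so one only gets ``increasing'' in the non-strict sense — which is all that is claimed. For the strict part with $\phi\in\mathcal N_s$ I would argue that strict convexity forces the difference quotient to be \emph{strictly} increasing in the left endpoint, and since for $u_1<u_2$ the intervals $[u_1-b,u_2-b]$ and $[-u_2-b,-u_1-b]$ are genuinely distinct (they would coincide only if $u_1-b=-u_2-b$ and $u_2-b=-u_1-b$, forcing $u_1=u_2$), the comparison above is strict; hence $g$, and therefore $\Psi$, is strictly increasing. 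I would close by restoring $u=at$ (noting $a\ne0$ so the substitution is a genuine increasing or, if $a<0$, the symmetric reparametrization) to conclude the lemma as stated.
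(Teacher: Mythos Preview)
Your argument is correct. The paper states this lemma as a ``trivial fact'' and gives no proof, so there is nothing to compare against; your approach via the increasing-increments property of convex functions (i.e., for fixed $h>0$ the map $s\mapsto\phi(s+h)-\phi(s)$ is nondecreasing, and strictly increasing when $\phi$ is strictly convex) is the natural one and works exactly as you describe.

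Two small remarks. First, your parenthetical that the strict-monotonicity clause in the definition of $\mathcal N$ ``will eventually be used for the strict conclusion'' is not borne out by your own argument: you use only convexity for the non-strict statement and only strict convexity for the strict one, so that clause is in fact not needed here. Second, in the last paragraph the coincidence condition $u_1-b=-u_2-b$ (and $u_2-b=-u_1-b$) yields $u_1+u_2=0$, not $u_1=u_2$; since $u_1,u_2>0$ this is still impossible, so your conclusion stands, but the intermediate implication is misstated.
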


Since $\Omega$ is a bounded open set and $\Phi$ is a convex function and strictly increasing on $[0,\infty)$, we have the following easily-established result.
\begin{lem}\label{6g}
If $f\in W_0^{1,\Phi}(\Omega)$, then $f\in W_0^{1,1}(\Omega)$.
\end{lem}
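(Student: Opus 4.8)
The plan is to reduce Lemma~\ref{6g} to the elementary embedding $L^{\Phi}(U)\hookrightarrow L^1(U)$ valid on any bounded open set $U$, which in turn rests only on the at-least-linear growth of $\Phi$ at infinity. First I would record that, since $\Phi$ is convex on $[0,\infty)$ with $\Phi(0)=0$, writing $1=\tfrac1t\cdot t+\bigl(1-\tfrac1t\bigr)\cdot 0$ and using convexity gives $\Phi(1)\le\tfrac1t\Phi(t)$, i.e.
\[
\Phi(t)\ge\Phi(1)\,t\qquad\text{for all }t\ge1,
\]
where $\Phi(1)>\Phi(0)=0$ because $\Phi$ is strictly increasing on $[0,\infty)$. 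Next, given a bounded open $U$ and $g\in L^{\Phi}(U)$, finiteness of the Luxemburg norm provides some $\lambda>0$ with $\int_U\Phi(|g|/\lambda)\,dx\le1$; splitting $U$ into $\{|g|\le\lambda\}$ and $\{|g|>\lambda\}$ and applying the previous display on the second set yields
\[
\int_U|g|\,dx=\lambda\int_U\frac{|g|}{\lambda}\,dx\le\lambda\left(|U|+\frac{1}{\Phi(1)}\int_U\Phi\!\left(\frac{|g|}{\lambda}\right)dx\right)\le\lambda\left(|U|+\frac{1}{\Phi(1)}\right)<\infty .
\]

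With this in hand, applying it with $U=\Omega$ to $g=f$ and to $g=|\nabla f|$ — both of which lie in $L^{\Phi}(\Omega)$ since $f\in W^{1,\Phi}(\Omega)$ — I would conclude $f,|\nabla f|\in L^1(\Omega)$, so $f\in W^{1,1}(\Omega)$. For the ``$_0$'' part, let $\bar f$ be the continuation of $f$ by $0$ outside $\Omega$; by the definition of $W_0^{1,\Phi}(\Omega)$ we have $\bar f\in W^{1,\Phi}(\mathbb{R}^n)$, hence $\bar f,|\nabla\bar f|\in L^{\Phi}(\mathbb{R}^n)$. I would then fix an open ball $B\supset\mathrm{cl}\,\Omega$; the embedding above (with $U=B$) gives $\bar f,|\nabla\bar f|\in L^1(B)$, while on the open set $\mathbb{R}^n\setminus\mathrm{cl}\,\Omega\supseteq\mathbb{R}^n\setminus B$ the function $\bar f$ vanishes identically, so $\bar f$ and its weak gradient are $\mathcal{L}^n$-a.e.\ zero there. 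Hence $\int_{\mathbb{R}^n}|\bar f|=\int_B|\bar f|<\infty$ and $\int_{\mathbb{R}^n}|\nabla\bar f|=\int_B|\nabla\bar f|<\infty$, so $\bar f\in W^{1,1}(\mathbb{R}^n)$, i.e.\ $f\in W_0^{1,1}(\Omega)$.

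I do not anticipate any genuine obstacle; the result is of the ``easily-established'' kind, and the only mildly delicate point is the passage from the bounded domain to all of $\mathbb{R}^n$ in the last step, which is precisely why I localise to a ball containing $\mathrm{cl}\,\Omega$ and use that the zero extension has vanishing weak gradient off $\mathrm{cl}\,\Omega$.
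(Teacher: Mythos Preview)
Your argument is correct and fills in precisely what the paper omits: the paper states Lemma~\ref{6g} as an ``easily-established result'' following from the boundedness of $\Omega$ and the convexity and strict monotonicity of $\Phi$, without supplying any proof. Your route via the elementary embedding $L^{\Phi}(U)\hookrightarrow L^{1}(U)$ on bounded $U$ (using $\Phi(t)\ge\Phi(1)\,t$ for $t\ge1$), applied first on $\Omega$ and then, after zero-extension, on a ball containing $\mathrm{cl}\,\Omega$, is the standard justification and is sound.
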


By \cite[Lemma 2.7]{BCFP} and \cite[Theorem 2.1]{CF06}, we have the following lemma.

\begin{lem}\label{3d}
Let $\Omega$ be a bounded open subset of $\mathbb{R}^n$. If $f\in W_0^{1,1}(\Omega)$, then $f^s\in W_0^{1,1}(\Omega^s)$.
\end{lem}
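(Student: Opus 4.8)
\textbf{Proof plan for Lemma \ref{3d}.} The plan is to reduce the statement to what is already known for the classical Steiner rearrangement by controlling both the norm of $f^s$ and the support of $f^s$. Since $\Omega$ is bounded, $\Omega^s$ is a bounded open subset of $\mathbb{R}^n$, so it suffices to show that $f^s\in W^{1,1}(\mathbb{R}^n)$ (taking the zero continuation) and that $f^s$ vanishes a.e. outside $\Omega^s$. The second fact is immediate from the definition of the Steiner rearrangement together with \eqref{2m}: the subgraph $\mathcal{S}_{f^s}$ is equivalent to $(\mathcal{S}_f)^s$, and since $\mathcal{S}_f\subset\Omega\times\mathbb{R}_t$ (roughly speaking), Steiner symmetrization in the $y$-variable keeps the first $n-1$ coordinates and the $t$-coordinate unchanged, so $(\mathcal{S}_f)^s\subset\Omega^s\times\mathbb{R}_t$, which forces $f^s=0$ $\mathcal{L}^n$-a.e. outside $\Omega^s$. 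This is exactly the remark following the definition of $f^s$ in Section \ref{s2}.

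First I would invoke the cited references directly. The content of \cite[Theorem 2.1]{CF06} is precisely the classical Pólya–Szegő inequality for Steiner symmetrization in $W^{1,1}$: if $f\in W^{1,1}_0(\Omega)$ then $f^s\in W^{1,1}_0(\Omega^s)$ and $\int_{\Omega^s}|\nabla f^s|\,dx\le\int_{\Omega}|\nabla f|\,dx$; in particular $f^s$ is weakly differentiable with $|\nabla f^s|\in L^1$. Similarly, \cite[Lemma 2.7]{BCFP} provides the $W^{1,1}$ membership of the Steiner rearrangement, possibly under slightly weaker hypotheses or in a form tailored to the later arguments of the present paper. Since the hypothesis here — $f\in W_0^{1,1}(\Omega)$ with $\Omega$ bounded open — matches the hypotheses of those results verbatim, the lemma is obtained by simply quoting them. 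Thus the proof is essentially one line: "This is \cite[Lemma 2.7]{BCFP} (see also \cite[Theorem 2.1]{CF06})." I would, however, briefly check the $L^1$ bound on $f^s$ itself: since the Steiner rearrangement is equimeasurable with $f$ (the distribution functions of $f(x',\cdot)$ and $f^s(x',\cdot)$ coincide for a.e.\ $x'$), Fubini's theorem gives $\int_{\Omega^s}f^s\,dx=\int_\Omega f\,dx<\infty$, so indeed $f^s\in L^1(\Omega^s)$, and combined with $|\nabla f^s|\in L^1$ we get $f^s\in W^{1,1}(\Omega^s)$; together with the support property above, $f^s\in W^{1,1}_0(\Omega^s)$.

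There is essentially no serious obstacle here, precisely because the lemma is stated as a consequence of two already-published results and is used only as a convenient bridge: the earlier Lemma \ref{6g} moves us from $W_0^{1,\Phi}(\Omega)$ down to $W_0^{1,1}(\Omega)$ (using boundedness of $\Omega$ and that $\Phi$ dominates a linear function near infinity), and Lemma \ref{3d} then places $f^s$ in $W_0^{1,1}(\Omega^s)$ so that the one-dimensional restriction machinery of Cianchi–Fusco can be applied slice by slice in the main proofs. The only points worth making explicit are (i) that the zero-extension convention in the definition of $W_0^{1,1}$ is compatible with the support statement $f^s=0$ a.e.\ outside $\Omega^s$, and (ii) that $\Omega^s$ is open and bounded whenever $\Omega$ is — the latter being standard. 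If one wanted a self-contained argument rather than a citation, the route would be to approximate $f$ by smooth functions, use the Hardy–Littlewood–Pólya and coarea-type estimates for the one-dimensional rearrangement to bound $\int|\nabla f^s|$ by $\int|\nabla f|$ on each slice, and pass to the limit; but since the statement explicitly attributes itself to \cite{BCFP} and \cite{CF06}, citing those is the intended and cleanest proof.
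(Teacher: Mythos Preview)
Your proposal is correct and matches the paper's own treatment: the lemma is stated without proof, attributed directly to \cite[Lemma 2.7]{BCFP} and \cite[Theorem 2.1]{CF06}. The extra verifications you outline (equimeasurability for the $L^1$ bound, the support property $f^s=0$ a.e.\ outside $\Omega^s$, and that $\Omega^s$ is bounded open) are all consistent with and slightly more explicit than the paper's bare citation.
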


 For Sobolev spaces $W^{1,p}(\mathbb{R}^n)$ ($1\leq p<\infty$), Burchard \cite[Proposition 7.1]{Bu97} proved the following proposition on the approximation of Schwarz spherical symmetrization by Steiner symmetrizations.
 \begin{pro}\label{19f}  {\rm (Convergence of the $W^{1,p}$-norm)}. Let $f$ be a nonnegative function in $W^{1,p}(\mathbb{R}^n)$, $n\geq2$ and $p\geq 1$, that vanishes at infinity. There exists a sequence of successive Steiner symmetrizations $\{f_k\}_{k\geq 0}$ of $f$  so that
 $$f_k\rightharpoonup f^{\star}\;{\rm weakly\;in}\;W^{1,1}(\mathbb{R}^n).$$
 \end{pro}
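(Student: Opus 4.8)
\noindent\emph{Proof proposal.} The plan is to realize $f^{\star}$ as an $L^{1}$-limit of a sequence of successive Steiner symmetrizations of $f$ --- performed, in effect, on all superlevel sets at once --- and then to upgrade this to weak convergence in $W^{1,1}$ by combining the classical Pólya--Szegő inequality for Steiner symmetrization with a compactness argument. I would start from the classical geometric fact that one can choose a countable sequence of directions $v_{1},v_{2},\dots\in S^{n-1}$ such that, for every bounded measurable set $E\subset\mathbb{R}^{n}$, the successive Steiner symmetrals $E_{k}$ of $E$ in these directions satisfy $\mathcal{L}^{n}(E_{k}\triangle E^{\star})\to 0$, where $E^{\star}$ denotes the centred ball with $\mathcal{L}^{n}(E^{\star})=\mathcal{L}^{n}(E)$; this is the set-theoretic approximation of Schwarz by Steiner symmetrization used throughout the works cited in the Introduction. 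Let $f_{k}$ be the function obtained from $f$ by the successive Steiner rearrangements in $v_{1},\dots,v_{k}$. Because $[f_{k}]_{t}$ is precisely the $k$-th Steiner symmetral of $[f]_{t}$, we get $\mathcal{L}^{n}([f_{k}]_{t}\triangle[f^{\star}]_{t})\to 0$ for every $t>0$, hence by Fubini $\|f_{k}-f^{\star}\|_{L^{1}(\mathbb{R}^{n})}=\int_{0}^{\infty}\mathcal{L}^{n}([f_{k}]_{t}\triangle[f^{\star}]_{t})\,dt\to 0$ by dominated convergence (the integrand is dominated by $2\,\mathcal{L}^{n}([f]_{t})$, which is integrable in $t$ since $f\in L^{1}$; if $f$ is merely in $L^{p}$ one inserts a weight $t^{p-1}$ and gets $L^{p}$-convergence). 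Since any ball $B_{R}\supset\operatorname{supp}f$ is invariant under Steiner symmetrization and the latter preserves inclusions, all the $f_{k}$ and $f^{\star}$ are supported in $B_{R}$ (for $f$ with unbounded support one instead retains the tightness bound $\mathcal{L}^{n}([f_{k}]_{t})=\mathcal{L}^{n}([f]_{t})<\infty$).

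Next I would invoke the Pólya--Szegő inequality for Steiner symmetrization, $\|\nabla f_{k}\|_{L^{p}(\mathbb{R}^{n})}\le\|\nabla f\|_{L^{p}(\mathbb{R}^{n})}$ for all $k$, together with equimeasurability $\|f_{k}\|_{L^{p}}=\|f\|_{L^{p}}$, to conclude that $\{f_{k}\}$ is bounded in $W^{1,p}(B_{R})$. When $p>1$ this finishes the proof: $W^{1,p}(B_{R})$ is reflexive, so some subsequence converges weakly in $W^{1,p}(B_{R})$, necessarily to $f^{\star}$ by the $L^{1}$-convergence established above; as this identifies the weak limit of every weakly convergent subsequence, the whole sequence converges weakly in $W^{1,p}(B_{R})$, and --- the supports lying in a fixed ball --- weakly in $W^{1,1}(\mathbb{R}^{n})$.

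The hard part will be the case $p=1$, and this is where I expect the main obstacle. Now $W^{1,1}(B_{R})$ is not reflexive, and the bound $\|\nabla f_{k}\|_{L^{1}}\le\|\nabla f\|_{L^{1}}$ yields only a $BV$-limit, possibly carrying a singular part; what must be shown is that $\{\nabla f_{k}\}$ is equi-integrable in $L^{1}(B_{R})$. I would first reduce to $f$ bounded by noting that the truncations $(f-b)_{+}$ and $\min\{f,a\}$ both commute with Steiner rearrangement, so that $\int_{\{f_{k}>b\}}|\nabla f_{k}|\le\int_{\{f>b\}}|\nabla f|$ and $\int_{\{f_{k}<a\}}|\nabla f_{k}|\le\int_{\{f<a\}}|\nabla f|$, both tending to $0$ as $b\to\infty$, $a\to 0^{+}$ uniformly in $k$ (using that $\nabla f=0$ a.e.\ on $\{f=0\}$). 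On the resulting band $\{a<f_{k}<b\}$ I would use the coarea formula $\|\nabla f_{k}\|_{L^{1}}=\int_{0}^{\infty}P([f_{k}]_{t})\,dt$ together with the facts that Steiner symmetrization does not increase perimeter and that $P([f_{k}]_{t})\to P([f^{\star}]_{t})$ for a.e.\ $t$ (a point which, like the equi-integrability below, ultimately rests on the finer properties of Steiner symmetrization) to obtain, by dominated convergence, $\|\nabla f_{k}\|_{L^{1}}\to\|\nabla f^{\star}\|_{L^{1}}$: convergence of the gradient masses with no loss. Combined with $f_{k}\to f^{\star}$ in $L^{1}$ --- which forces $\nabla f_{k}\to\nabla f^{\star}$ in the sense of distributions and $|\nabla f_{k}|\,\mathcal{L}^{n}\rightharpoonup|\nabla f^{\star}|\,\mathcal{L}^{n}$ weakly-$*$ with an absolutely continuous limit --- this ``no loss of mass'' should yield equi-integrability of $\{\nabla f_{k}\}$; the delicate estimate, namely controlling $|\nabla f_{k}|$ on small subsets of the band, is exactly the point at which the fine structure of Steiner symmetrization in the spirit of Cianchi--Fusco \cite{CF06} is needed, and it is the step I would expect to absorb most of the work. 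Granting it, the Dunford--Pettis theorem makes $\{\nabla f_{k}\}$ weakly precompact in $L^{1}(B_{R})$ with $\nabla f^{\star}$ as its only possible weak limit, so $f_{k}\rightharpoonup f^{\star}$ weakly in $W^{1,1}(B_{R})$, hence weakly in $W^{1,1}(\mathbb{R}^{n})$.
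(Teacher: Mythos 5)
First, a point of comparison: the paper does not prove this proposition at all --- it is quoted from Burchard \cite[Proposition 7.1]{Bu97} --- so there is no internal argument to measure yours against; what matters is whether your reconstruction stands on its own. For $p>1$ your scheme (uniform $W^{1,p}$ bound from the Steiner P\'olya--Szeg\"o inequality, reflexivity, identification of the weak limit through the strong $L^p$ convergence of the symmetrized sequence) is sound, modulo the caveat that the ``classical'' fact you invoke --- a single countable set of directions along which iterated Steiner symmetrization works for \emph{every} bounded measurable set --- is much stronger than needed and not classical; for the statement it suffices to choose the directions depending on $f$, and the classical approximation results at the function level are usually phrased with rotations interspersed. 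The genuine gap is in the case $p=1$, which is exactly the case the paper uses in Remark \ref{19b} (an Orlicz--Sobolev function is only guaranteed to lie in $W_0^{1,1}$), and you yourself leave it open (``granting it''). Two steps there do not hold as stated. First, $P([f_k]_t)\to P([f^{\star}]_t)$ does not follow from $\mathcal{L}^n([f_k]_t\triangle[f^{\star}]_t)\to0$: perimeter is only lower semicontinuous under convergence in measure, and although $P([f_k]_t)$ is non-increasing in $k$, its limit can stay strictly above $P([f^{\star}]_t)$; nothing in a choice of directions made only to force convergence in measure of the level sets rules this out. Second, and independently, the inference ``no loss of gradient mass plus weak-$*$ convergence of $|\nabla f_k|\mathcal{L}^n$ to an absolutely continuous limit implies equi-integrability'' is false in general: on $[0,1]$ take $h_k(x)=\int_0^x k\chi_{A_k}(s)\,ds$ with $A_k$ a union of $k$ evenly spaced intervals of length $k^{-2}$; then $h_k\to x$ uniformly, $\|h_k'\|_{L^1}=1=\|1\|_{L^1}$ and $h_k'\,\mathcal{L}^1\rightharpoonup\mathcal{L}^1$ weak-$*$, yet $\{h_k'\}$ is not equi-integrable and does not converge weakly in $L^1$. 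So even granting your perimeter claim, Dunford--Pettis is not reached by the route you describe.

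The standard way to close the $p=1$ case is shorter and uses tools already present in the paper's references: Steiner symmetrization satisfies the P\'olya--Szeg\"o principle for \emph{every} convex integrand, i.e.\ $\int\Psi(|\nabla f_k|)\,dx\le\int\Psi(|\nabla f|)\,dx$ for each convex increasing $\Psi$ with $\Psi(0)=0$ (a special case of the Steiner inequalities in \cite{CF06}, and in essence the scalar case of what Section 4 of this paper proves). Since $|\nabla f|\in L^1$, the de la Vall\'ee-Poussin criterion provides a superlinear such $\Psi$ with $\int\Psi(|\nabla f|)\,dx<\infty$; the resulting uniform bound $\sup_k\int\Psi(|\nabla f_k|)\,dx<\infty$ gives equi-integrability of $\{\nabla f_k\}$ at once, Dunford--Pettis yields weak $L^1$ compactness of the gradients, and your $L^1$ convergence of $f_k$ identifies the limit as $\nabla f^{\star}$, giving the claimed weak $W^{1,1}$ convergence. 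A final small caveat: your layer-cake domination uses $f\in L^1$ and a fixed ball containing the supports, which is fine for the paper's application but not for a general $f\in W^{1,p}(\mathbb{R}^n)$ merely vanishing at infinity, where the passage from a weak $W^{1,p}$ limit to weak $W^{1,1}(\mathbb{R}^n)$ convergence also needs justification.
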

\begin{rem}\label{19b}
 For Orlicz-Sobolev spaces $W^{1,\Phi}(\mathbb{R}^n)$, there does not exist a result similar to that of Proposition \ref{19f}. Thus we consider the problem in $W^{1,1}(\mathbb{R}^n)$. By Lemma \ref{6g}, $f\in W_0^{1,1}(\Omega)$  for $f\in W_0^{1,\Phi}(\Omega)$.  Thus, for $f\in W_0^{1,\Phi}(\Omega)$, there exists a sequence of successive Steiner symmetrizations $\{f_k\}_{k\geq 0}$ of $f$ so that
$$f_k\rightharpoonup f^{\star}\;{\rm weakly\;in}\;W^{1,1}(\mathbb{R}^n).$$
\end{rem}

\subsubsection{On functions of bounded variation and sets of finite perimeter}\label{ss1}
The space of functions of bounded variation in $\Omega$ is denoted by $BV(\Omega)$. Recall that a function $f\in L^1(\Omega)$ is said to be of bounded variation in $\Omega$ if its distributional gradient $Df$ is a vector-valued Radon measure in $\Omega$ whose total variation $|Df|$ is finite in $\Omega$ (see the precise definitions in \cite[p.196]{EG92}).
The space $BV_{\rm loc}(\Omega)$ is defined accordingly.

Given a measurable set $E$ in $\mathbb{R}^n$ and a
point $x\in\mathbb{R}^n$, the density of $E$ at $x$ is defined by
$$D(E,x)=\lim_{r\rightarrow 0}\frac{\mathcal{L}^n(E\cap B_r(x))}{\mathcal{L}^n(B_r(x))},$$
provided that the limit on the right-hand side exists. Here, $B_r(x)$ denotes the ball,
centered at $x$, having radius $r$. The {\it essential boundary} of $E$ is the Borel set
\begin{eqnarray}
\partial^M E=\mathbb{R}^n\setminus\{x\in\mathbb{R}^n:\;\;D(E,x)=0\;{\rm or}\;D(E,x)=1\}.
\end{eqnarray}
One has
\begin{eqnarray}
\partial^M(E^{\prime}\cup E^{\prime\prime})\cup\partial^M(E^{\prime}\cap E^{\prime\prime})\subset\partial^ME^{\prime}\cup\partial^ME^{\prime\prime}
\end{eqnarray}
for any measurable sets $E^{\prime}$ and $E^{\prime\prime}$ in $\mathbb{R}^n$.

 For any measurable function $f$ in an open set $\Omega\subset\mathbb{R}^n$, the {\it approximate upper} and
{\it lower limit} of $f$ at a point $x$ are defined as
\begin{eqnarray}
f_+(x)=\inf\{t:D(\{f>t\},x)=0\}\;{\rm and}\; f_-(x)=\sup\{t:D(\{f < t\},x)=0\} ,
\end{eqnarray}
respectively. The function $f$ is said to be {\it approximately continuous} at $x$ if $f_+(x)$ and
$f_-(x)$ are equal and finite. The common value of $f_+(x)$ and $f_-(x)$ at a point of
 approximate continuity $x$ is called the {\it approximate limit} of $f$ at $x$ and is denoted
by $\tilde{f}(x)$. By $\mathcal{C}_f$ we denote the Borel set of all points at which $f$ is approximately
continuous. The {\it precise representative} $f^{\ast}$ of $f$ is defined as
\begin{equation}
f^{\ast}(x)=\left\{\begin{aligned}
&\frac{f_+(x)+f_-(x)}{2}&&{\rm if}\; f_+(x)\;{\rm and}\; f_-(x) \;{\rm are\; both\; finite},\\
&0&&{\rm otherwise}.
\end{aligned} \right.
\end{equation}

Clearly, $f^{\ast}\equiv \tilde{f}$ in $\mathcal{C}_f$. A locally integrable function $f$ in $\Omega$ is said to be {\it approximately differentiable} at $x\in\mathcal{C}_f$ if there exists a vector $\nabla f(x)$ in $\mathbb{R}^n$, called the {\it approximate gradient} of $f$ at $x$, such that
\begin{eqnarray}\label{1c}
\lim_{r\rightarrow 0}\frac{1}{\mathcal{L}(B_r(x))}\int_{B_r(x)}\frac{|f(z)-\tilde{f}(x)-\langle\nabla f(x),z-x\rangle|}{r}dz=0.
\end{eqnarray}
The set of all points $x\in\mathcal{C}_f$ where $f$ is approximately differentiable is a Borel set
denoted by $\mathcal{D}_f$. The subset of $\mathcal{D}_f$ where $\nabla f\neq0$ and the subset where $\nabla f=0$ will be
denoted by $\mathcal{D}_f^+$ and $\mathcal{D}^-_f$, respectively. If $f\in BV(\Omega)$, then $\mathcal{L}^n(\Omega\setminus\mathcal{D}_f)=0$. Moreover, denoting by $D^af$ and by $D^sf$ the absolutely continuous part and the singular part, respectively, of $Df$ with respect to $\mathcal{L}^n$, we have that $\nabla f$ agrees $\mathcal{L}^n$-a.e. with the density of $D^af$ with respect to $\mathcal{L}^n$, and that $|D^sf|(\mathcal{D}_f)=0$. Thus, in particular, $W^{1,1}(\Omega)$ can be identified with the subspace of $BV(\Omega)$ of those functions in $BV(\Omega)$ such that $|Df|(B)=0$ for every Borel set $B\subset \Omega$ satisfying $\mathcal{L}^n(B)=0$.

A measurable subset $E$ of $\mathbb{R}^n$ is said to be of {\it finite perimeter} in an open set $\Omega\subset\mathbb{R}^n$ if $D\chi_E$ is a vector-valued Radon measure with finite total variation in $\Omega$, where $\chi_E$ denotes the characteristic function of $E$. The perimeter of $E$ in a Borel subset $B$ of $\Omega$ is defined by
$$P(E;B)=|D\chi_E|(B).$$
When $B=\mathbb{R}^n$, we shall simply write $P(E)$ instead of $P(E;\mathbb{R}^n)$. If $\chi_E\in BV_{\rm loc}(\Omega)$, then we say that $E$ has {\it locally finite perimeter} in $\Omega$.

The following theorem (see \cite[Sect. 4.1.5, Theorem 1]{GMS98}) completely characterizes functions of bounded variation in terms of their subgraphs. Let us remark that a slightly different notion of subgraph is needed here. Given a function $f:\Omega\subset \mathbb{R}^n\rightarrow \mathbb{R}$, we set
$$\mathcal{S}_f^-:=\{(x,y,t)\in\mathbb{R}^{n+1}:(x,y)\in\Omega,\;t<f(x,y)\}.$$

\begin{thm}\label{2s}
Let $\Omega$ be a bounded open subset of $\mathbb{R}^n$ and let $f$ be a nonnegative function from $L^1(\Omega)$. Then $\mathcal{S}^-_f$ is a set of finite perimeter in $\Omega\times\mathbb{R}_t$ if and only if $f\in BV(\Omega)$. Moreover, in this case,
$$P(\mathcal{S}^-_f;B\times\mathbb{R}_t)=\int_{B}\sqrt{1+|\nabla f|^2}+|D^sf|(B)$$
for every Borel set $B\subset \Omega$.
\end{thm}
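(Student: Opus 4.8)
The plan is to prove the equivalence and the area formula in one stroke by computing the distributional gradient $D\chi_{\mathcal{S}_f^-}$ on $\Omega\times\mathbb{R}_t$ explicitly. I would first dispatch the implication ``$\mathcal{S}_f^-$ of finite perimeter $\Rightarrow f\in BV(\Omega)$'' by slicing $\mathcal{S}_f^-$ with the hyperplanes $\{t=\mathrm{const}\}$. For any $E\subset\Omega\times\mathbb{R}_t$ of finite perimeter, a.e.\ slice $E_t=\{x\in\Omega:(x,t)\in E\}$ has finite perimeter in $\Omega$ and the $\mathbb{R}^n$-valued horizontal component $D_x\chi_E$ of $D\chi_E$ disintegrates over $t$ as $\int_{\mathbb{R}}D\chi_{E_t}\,dt$; hence
\[
\int_{\mathbb{R}}P(E_t;\Omega)\,dt=|D_x\chi_E|(\Omega\times\mathbb{R}_t)\le|D\chi_E|(\Omega\times\mathbb{R}_t)=P(E;\Omega\times\mathbb{R}_t)<\infty.
\]
With $E=\mathcal{S}_f^-$, whose slices are $E_t=\{f>t\}$, and recalling $f\in L^1(\Omega)$, the coarea characterization of $BV$ gives $f\in BV(\Omega)$, with $|Df|(\Omega)=\int_{\mathbb{R}}P(\{f>t\};\Omega)\,dt$.

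For the converse, let $f\in BV(\Omega)$ and let $\tilde f$ be its approximate limit, so that $\mathcal{S}_f^-$ agrees up to an $\mathcal{L}^{n+1}$-null set with $\{(x,t):t<\tilde f(x)\}$. For $\psi=(\psi^x,\psi^t)\in C^1_c(\Omega\times\mathbb{R}_t;\mathbb{R}^n\times\mathbb{R})$ I would compute $\int_{\mathcal{S}_f^-}\operatorname{div}\psi=\int_\Omega\big(\int_{-\infty}^{\tilde f(x)}(\operatorname{div}_x\psi^x+\partial_t\psi^t)\,dt\big)\,dx$, using $\int_{-\infty}^{\tilde f(x)}\partial_t\psi^t\,dt=\psi^t(x,\tilde f(x))$ and, in the remaining term, Fubini in $t$ through $\int_{-\infty}^{\tilde f(x)}(\cdot)\,dt=\int_{\mathbb{R}}\chi_{\{f>t\}}(x)\,(\cdot)\,dt$ together with the definition of $D\chi_{\{f>t\}}$; this gives
\[
-\int_{\mathcal{S}_f^-}\operatorname{div}\psi=-\int_\Omega\psi^t(x,\tilde f(x))\,dx+\int_{\mathbb{R}}\big\langle D\chi_{\{f>t\}},\,\psi^x(\cdot,t)\big\rangle\,dt.
\]
The modulus of the right-hand side is at most $\big(\mathcal{L}^n(\Omega)+|Df|(\Omega)\big)\|\psi\|_\infty$ by the coarea formula, so $\mathcal{S}_f^-$ has finite perimeter in $\Omega\times\mathbb{R}_t$, and the two components of $D\chi_{\mathcal{S}_f^-}$ are $D^t\chi_{\mathcal{S}_f^-}=-(\mathrm{id},\tilde f)_\#\mathcal{L}^n$ — the push-forward of Lebesgue measure on $\Omega$ by $x\mapsto(x,\tilde f(x))$ — and $D^x\chi_{\mathcal{S}_f^-}=\int_{\mathbb{R}}D\chi_{\{f>t\}}\,dt$, disintegrated over $t$.

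It then remains to compute the total variation $|D\chi_{\mathcal{S}_f^-}|(B\times\mathbb{R}_t)$ for Borel $B\subset\Omega$. Using $\mathcal{L}^n(\Omega\setminus\mathcal{D}_f)=0$ and $Df=\nabla f\,\mathcal{L}^n+D^sf$ with $D^sf$ concentrated on $\Omega\setminus\mathcal{D}_f$, I would split along the graph $\Gamma_0:=\{(x,\tilde f(x)):x\in\mathcal{D}_f\}$, a graph over a set of full $\mathcal{L}^n$-measure in $\Omega$. On $\Gamma_0$ both components of $D\chi_{\mathcal{S}_f^-}$ are present, with densities $-1$ and $\nabla f$ in the $\mathcal{L}^n$-parametrization of the graph (the latter because, under the coarea disintegration, the absolutely continuous part $\nabla f\,\mathcal{L}^n$ of $Df$ is exactly the part of $D^x\chi_{\mathcal{S}_f^-}$ carried by $\Gamma_0$), whence $|D\chi_{\mathcal{S}_f^-}|\big(\Gamma_0\cap(B\times\mathbb{R}_t)\big)=\int_B\sqrt{1+|\nabla f|^2}\,dx$. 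Off $\Gamma_0$, i.e.\ over the $\mathcal{L}^n$-null set $\Omega\setminus\mathcal{D}_f$ (which carries the jump and Cantor parts of $Df$), the $t$-component of $D\chi_{\mathcal{S}_f^-}$ vanishes, so there $D\chi_{\mathcal{S}_f^-}=D^x\chi_{\mathcal{S}_f^-}$ is purely horizontal, with total variation over $B\times\mathbb{R}_t$ equal to $|Df|(B)-\int_B|\nabla f|\,dx=|D^sf|(B)$ by the coarea formula. Since $\Gamma_0$ and this second set project onto disjoint subsets of $\Omega$, the two contributions add, giving $P(\mathcal{S}_f^-;B\times\mathbb{R}_t)=\int_B\sqrt{1+|\nabla f|^2}\,dx+|D^sf|(B)$. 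I expect the main obstacle to be exactly this last step: making rigorous that $D^sf$ ``lives off the graph $\Gamma_0$'' and contributes precisely $|D^sf|(B)$ requires the fine structure of $BV$ functions — the $\mathcal{H}^{n-1}$-rectifiability of the jump set, the splitting of $D^sf$ into its jump and Cantor parts, and a blow-up analysis of $\partial^{\ast}\mathcal{S}_f^-$ (over a jump point of $f$ the blow-up of $\mathcal{S}_f^-$ is a \emph{vertical} half-space, so $\partial^{\ast}\mathcal{S}_f^-$ is a vertical hyperplane carrying horizontal variation of total mass $\int(f_+-f_-)\,d\mathcal{H}^{n-1}$, and similarly for the Cantor part). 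When $f\in W^{1,1}(\Omega)$ this difficulty is absent: then $D^sf=0$, and one argues straight from the sup-definition of perimeter, since with $\mathbf{G}(x):=\int_{-\infty}^{\tilde f(x)}\psi^x(x,t)\,dt\in W^{1,1}_c(\Omega;\mathbb{R}^n)$ the Sobolev chain rule together with $\int_\Omega\operatorname{div}\mathbf{G}\,dx=0$ yields $\int_{\mathcal{S}_f^-}\operatorname{div}\psi=\int_\Omega\big(\psi^t(x,\tilde f(x))-\psi^x(x,\tilde f(x))\cdot\nabla f(x)\big)\,dx$, whose supremum over $\{|\psi|\le1\}$ equals $\int_\Omega\sqrt{1+|\nabla f|^2}\,dx$ by a fiberwise Cauchy--Schwarz; the full $BV$ statement can then alternatively be recovered by approximation, using the lower semicontinuity of perimeter under $L^1$-convergence and the fact that the relaxed area functional equals $\int_B\sqrt{1+|\nabla f|^2}\,dx+|D^sf|(B)$.
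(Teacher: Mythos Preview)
The paper does not prove this theorem. It is quoted as a known result from the literature, with the citation ``see \cite[Sect.~4.1.5, Theorem~1]{GMS98}'' (Giaquinta--Modica--Sou\v{c}ek). So there is no ``paper's own proof'' to compare your proposal against.

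That said, your outline is essentially the standard argument one finds in references such as \cite{GMS98} or \cite[Section~3.2 and Theorem~3.108]{AFP00}: slicing to get one implication via the coarea characterization of $BV$, then computing $D\chi_{\mathcal{S}_f^-}$ by testing against $\psi\in C^1_c$ and Fubini, and finally splitting the total variation into the graph contribution over $\mathcal{D}_f$ and the singular contribution concentrated where $f$ is not approximately differentiable. Your identification of the main difficulty is accurate: showing that the horizontal part of $D\chi_{\mathcal{S}_f^-}$ off the graph $\Gamma_0$ has total variation exactly $|D^sf|(B)$ --- and not merely an inequality --- does require the fine structure theory (jump/Cantor decomposition, rectifiability, blow-up), and your sketch of how this goes at jump points is correct. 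The alternative route you mention at the end, via smooth approximation and the lower-semicontinuity/relaxation of the area functional, is also standard and arguably cleaner, though it presupposes that one already knows the relaxed area functional on $BV$ equals $\int\sqrt{1+|\nabla f|^2}+|D^sf|$, which is itself a theorem of comparable depth.

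One small caution: in the line $D^t\chi_{\mathcal{S}_f^-}=-(\mathrm{id},\tilde f)_\#\mathcal{L}^n$, the push-forward by $x\mapsto(x,\tilde f(x))$ is only well-defined on $\mathcal{C}_f$ (or $\mathcal{D}_f$), and on the jump set one should really push forward by both $x\mapsto(x,f_+(x))$ and $x\mapsto(x,f_-(x))$ --- but since $\mathcal{L}^n(\Omega\setminus\mathcal{D}_f)=0$ this does not affect the $t$-component, as you implicitly use later.
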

Let $E$ be a set of locally finite perimeter in an open subset $\Omega$ of $\mathbb{R}^n$ and let $D_i\chi_E$ denote the $i$-th component of the distributional gradient $D\chi_E$. We denote by $v_i^E$, $i=1,\dots,n$, the derivative of the measure $D_i\chi_E$ with respect to $|D\chi_E|$. The {\it reduced boundary}  $\partial^{\ast}E$ of $E$ is the set of all points $x\in\Omega$ such that the vector $v^E(x)=(v_1^E(x),\dots,v_n^E(x))$ exists and satisfies $|v^E(x)|=1$ (see the precise definitions in \cite[p.221]{EG92}). The vector $v^E(x)$ is called the {\it generalized inner normal} to $E$ at $x$.

\begin{thm}\cite[Theorem B]{CF06}\label{2w}
Let $\Omega$ be an open subset of $\mathbb{R}^n$ and let $E^{\prime}$ and $E^{\prime\prime}$ be sets of locally finite perimeter in $\Omega$. Then $v^{E^{\prime}}(x)=\pm v^{E^{\prime\prime}}(x)$ for $\mathcal{H}^{n-1}$-a.e. $x\in\partial^{\ast}E^{\prime}\cap\partial^{\ast}E^{\prime\prime}$.
\end{thm}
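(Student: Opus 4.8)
To prove Theorem~\ref{2w}, the plan is to exploit the rectifiability of reduced boundaries together with the Lebesgue density theorem. The key observation is that $\partial^{\ast}E^{\prime}\cap\partial^{\ast}E^{\prime\prime}$ is a subset of each of the two $(n-1)$-rectifiable sets $\partial^{\ast}E^{\prime}$ and $\partial^{\ast}E^{\prime\prime}$, hence it must share, at $\mathcal H^{n-1}$-a.e.\ of its points, the approximate tangent plane of each of them; and those tangent planes are exactly $v^{E^{\prime}}(x)^{\perp}$ and $v^{E^{\prime\prime}}(x)^{\perp}$.

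First I would recall De Giorgi's structure theorem for a set $E$ of locally finite perimeter in an open set $\Omega$: $\partial^{\ast}E$ is countably $(n-1)$-rectifiable with locally finite $\mathcal H^{n-1}$-measure, $|D\chi_{E}|$ coincides with $\mathcal H^{n-1}$ restricted to $\partial^{\ast}E$, and at every $x\in\partial^{\ast}E$ the rescaled sets $(E-x)/r$ converge in $L^{1}_{\rm loc}$, as $r\to 0^{+}$, to the half-space $\{z:\;z\cdot v^{E}(x)\ge 0\}$, so that the approximate tangent plane of $\partial^{\ast}E$ at $x$ exists and equals the hyperplane $v^{E}(x)^{\perp}$ for $\mathcal H^{n-1}$-a.e.\ $x\in\partial^{\ast}E$ (see \cite{AFP00,EG92}). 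Second, I would use the following standard consequence of Lebesgue differentiation on rectifiable sets: if $R$ is $(n-1)$-rectifiable with locally finite $\mathcal H^{n-1}$-measure and $A\subset R$ is $\mathcal H^{n-1}$-measurable, then $\mathcal H^{n-1}$-a.e.\ $x\in A$ is a point of density one of $A$ with respect to $\mathcal H^{n-1}$ restricted to $R$, and at such a point $A$ has the same approximate tangent plane as $R$ (the rescaled measures of $A$ and of $R$ differ by a sequence converging weakly-$\ast$ to zero).

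Now let $A:=\partial^{\ast}E^{\prime}\cap\partial^{\ast}E^{\prime\prime}$. Applying the second fact with $R=\partial^{\ast}E^{\prime}$ shows that for $\mathcal H^{n-1}$-a.e.\ $x\in A$ the approximate tangent plane of $A$ at $x$ equals that of $\partial^{\ast}E^{\prime}$ at $x$, which by the first fact is $v^{E^{\prime}}(x)^{\perp}$; applying it with $R=\partial^{\ast}E^{\prime\prime}$ shows that the same tangent plane equals $v^{E^{\prime\prime}}(x)^{\perp}$ for $\mathcal H^{n-1}$-a.e.\ $x\in A$. Intersecting the two full-measure subsets of $A$, we obtain $v^{E^{\prime}}(x)^{\perp}=v^{E^{\prime\prime}}(x)^{\perp}$ for $\mathcal H^{n-1}$-a.e.\ $x\in A$; since $|v^{E^{\prime}}(x)|=|v^{E^{\prime\prime}}(x)|=1$ by the very definition of the reduced boundary, this forces $v^{E^{\prime}}(x)=\pm\,v^{E^{\prime\prime}}(x)$, which is the asserted conclusion.

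The one step that I expect to require real care is the first fact: identifying the measure-theoretic inner normal $v^{E}$, defined purely as a Radon--Nikodym derivative of $D\chi_{E}$ with respect to $|D\chi_{E}|$, with the approximate tangent plane of the rectifiable set $\partial^{\ast}E$, i.e.\ carrying out De Giorgi's blow-up analysis of the reduced boundary. Once this classical ingredient is in hand, the remainder is just Lebesgue differentiation on rectifiable sets, and no hypothesis on $\Omega$ beyond openness, nor any relation between $E^{\prime}$ and $E^{\prime\prime}$, enters the argument, in accordance with the generality of the statement.
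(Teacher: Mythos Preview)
Your argument is correct: the identification of the approximate tangent plane of $\partial^{\ast}E$ at $\mathcal H^{n-1}$-a.e.\ point with $v^{E}(x)^{\perp}$ via De Giorgi's blow-up, followed by the locality of approximate tangent planes under $\mathcal H^{n-1}$-density, is exactly the standard route and yields the conclusion. Note, however, that the paper does not give its own proof of this statement; it is quoted verbatim as Theorem~B from \cite{CF06} and used as a black box, so there is no in-paper argument to compare against---your proposal simply supplies the (classical) justification that the paper outsources to the literature.
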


If $f\in W^{1,1}(\Omega)$ and $g:\Omega\rightarrow[0,+\infty)$ is a Borel function, then {\it coarea formula} for Sobolev functions can be written as
\begin{eqnarray}\label{2p}
\int_{\Omega}g|\nabla f|dx=\int_{-\infty}^{+\infty}dt\int_{\Omega\cap\partial^{\ast}\{f>t\}}g
d\mathcal{H}^{n-1}=\int_{-\infty}^{+\infty}dt\int_{\{f^{\ast}=t\}}gd\mathcal{H}^{n-1}.
\end{eqnarray}

The following proposition is a special case of the coarea formula for rectifiable sets (see\cite[Theorem 2.93]{AFP00}).

\begin{pro}
Let $\Omega\subset\mathbb{R}^n$ be an open set and let $E$ be a set of finite perimeter in $\Omega$. Let $g:\Omega\rightarrow [0,+\infty]$ be a Borel function. Then
\begin{eqnarray}
\int_{\partial^{\ast}E\cap\Omega}g(x)|v^E_n(x)|d\mathcal{H}^{n-1}(x)=\int_{\pi_{n-1}(\Omega)}dx^{\prime}\int_{(\partial^{\ast}E\cap\Omega)_{x^{\prime}}}g(x^{\prime},y)d\mathcal{H}^0(y).
\end{eqnarray}
\end{pro}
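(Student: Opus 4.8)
\emph{Proof plan.} The identity is nothing but the coarea formula for rectifiable sets, \cite[Theorem 2.93]{AFP00}, applied to the rectifiable set $M:=\partial^{\ast}E\cap\Omega$ and to the orthogonal projection $p:\mathbb{R}^n\to\mathbb{R}^{n-1}$, $p(x',y)=x'$. The plan is: first, to check that \cite[Theorem 2.93]{AFP00} is applicable here, with $k=n-1$, $m=n-1$ and the Lipschitz map $p$; second, to compute the relevant coarea factor and verify that it equals $|v^E_n|$; third, to identify the $0$-dimensional slices produced by the formula with the one-dimensional sections $(\partial^{\ast}E\cap\Omega)_{x'}$ and to cut the outer integral down to $\pi_{n-1}(\Omega)$.

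For the first step, I would invoke De Giorgi's structure theorem (see, e.g., \cite{AFP00,EG92}): since $E$ has finite perimeter in $\Omega$, the reduced boundary $\partial^{\ast}E$ is countably $(n-1)$-rectifiable, $|D\chi_E|$ coincides with the restriction of $\mathcal{H}^{n-1}$ to $\partial^{\ast}E$, and at every $x\in\partial^{\ast}E$ the approximate tangent hyperplane $T_xM$ exists and equals $(v^E(x))^{\perp}$. In particular $\mathcal{H}^{n-1}(M)=P(E;\Omega)<\infty$, so $M$ is a rectifiable set of finite $\mathcal{H}^{n-1}$-measure, and \cite[Theorem 2.93]{AFP00} applies to $g$ and $p$.

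For the second step, at an $x\in M$ where $T_xM$ exists, the tangential differential $d^M_xp$ is just the restriction of the linear projection $p$ to $T_xM$, a linear map between two $(n-1)$-dimensional spaces; since here $m=k$, its coarea factor is the absolute value of its determinant with respect to orthonormal bases. Choosing an orthonormal basis of $T_xM=(v^E(x))^{\perp}$ adapted to the splitting $\mathbb{R}^n=\mathbb{R}^{n-1}\times\mathbb{R}_y$, a short computation gives that this Jacobian equals $|v^E(x)\cdot e_n|=|v^E_n(x)|$ — it is $1$ when $v^E(x)=\pm e_n$ and $0$ exactly when $v^E_n(x)=0$, i.e. when $e_n\in T_xM$. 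Hence the left-hand integrand $g(x)\,|v^E_n(x)|$ is precisely $g(x)$ times the coarea factor of $d^M_xp$, and \cite[Theorem 2.93]{AFP00} yields
$$\int_{\partial^{\ast}E\cap\Omega}g(x)\,|v^E_n(x)|\,d\mathcal{H}^{n-1}(x)=\int_{\mathbb{R}^{n-1}}dx'\int_{(\partial^{\ast}E\cap\Omega)\cap p^{-1}(x')}g\,d\mathcal{H}^0.$$

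For the third step, for every $x'\in\mathbb{R}^{n-1}$ the fibre $(\partial^{\ast}E\cap\Omega)\cap p^{-1}(x')$ equals $\{x'\}\times(\partial^{\ast}E\cap\Omega)_{x'}$, so integrating with respect to $\mathcal{H}^0$ over it equals $\int_{(\partial^{\ast}E\cap\Omega)_{x'}}g(x',y)\,d\mathcal{H}^0(y)$, and this vanishes whenever $x'\notin\pi_{n-1}(\Omega)$, since then $(x',y)\notin\Omega$ for every $y$. Restricting the outer integral to $\pi_{n-1}(\Omega)$ gives exactly the claimed formula. The only part that is not a direct reading of \cite[Theorem 2.93]{AFP00} combined with De Giorgi's theorem is the coarea-factor computation; because the map in question is linear it is elementary, and I expect no real obstacle.
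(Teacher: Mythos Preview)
Your proposal is correct and matches the paper's own treatment exactly: the paper does not give a proof at all but simply states that the proposition ``is a special case of the coarea formula for rectifiable sets (see \cite[Theorem 2.93]{AFP00}),'' and your argument is precisely the unpacking of that citation --- De Giorgi's structure theorem for rectifiability of $\partial^{\ast}E$, the identification of the coarea factor of the projection with $|v^E_n|$, and the fibre identification. There is nothing to add.
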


The next theorem links the approximate gradient of a function of bounded variation to the generalized inner normal to its subgraph (see \cite[Sect. 4.1.5, Theorems 4 and 5]{GMS98}).
\begin{thm}\label{T1}
Let $\Omega$ be an open subset of $\mathbb{R}^n$ and let $\nabla_if$ denote the $i$-th component of $\nabla f$. Then for $f\in BV(\Omega)$,
\begin{eqnarray}\label{2c}
v^{\mathcal{S}^-_f}(x,t)=\left(\frac{\nabla_1f(x)}{\sqrt{1+|\nabla f(x)|^2}},\cdots,\frac{\nabla_n f(x)}{\sqrt{1+|\nabla f(x)|^2}},\frac{-1}{\sqrt{1+|\nabla f(x)|^2}}\right)
\end{eqnarray}
for $\mathcal{H}^n$-a.e. $(x,t)\in \partial^{\ast}\mathcal{S}^-_f\cap(\mathcal{D}_f\times \mathbb{R}_t)$ and
$$v^{\mathcal{S}^-_f}_{n+1}(x,t)=0\;\; for\;\mathcal{H}^n\text{-}a.e.\;(x,t)\in\partial^{\ast}\mathcal{S}^-_f\cap[(\Omega\setminus\mathcal{D}_f)\times\mathbb{R}_t].$$
In particular, if $f\in W^{1,1}(\Omega)$, then {\rm (\ref{2c})} holds for $\mathcal{H}^n$-a.e. $(x,t)\in\partial^{\ast}\mathcal{S}_f^-\cap(\Omega\times \mathbb{R}_t)$.
\end{thm}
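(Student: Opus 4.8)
\emph{Sketch.} Since $f\in BV(\Omega)$, Theorem~\ref{2s} shows that $\mathcal S^-_f$ has locally finite perimeter in $\Omega\times\mathbb R_t$. Hence, by De~Giorgi's structure theorem, $\partial^{\ast}\mathcal S^-_f$ is (locally) $\mathcal H^n$-rectifiable, $|D\chi_{\mathcal S^-_f}|=\mathcal H^n\llcorner\partial^{\ast}\mathcal S^-_f$, $D\chi_{\mathcal S^-_f}=v^{\mathcal S^-_f}\,|D\chi_{\mathcal S^-_f}|$, and at every $(x,t)\in\partial^{\ast}\mathcal S^-_f$ the rescaled sets $r^{-1}\big(\mathcal S^-_f-(x,t)\big)$ converge in $L^1_{\mathrm{loc}}(\mathbb R^{n+1})$, as $r\to0^+$, to the half-space $\{z\in\mathbb R^{n+1}:\langle z,v^{\mathcal S^-_f}(x,t)\rangle>0\}$. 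The plan is to read off $v^{\mathcal S^-_f}$ from these blow-ups (over $\mathcal D_f$) and from an explicit computation of $D_{n+1}\chi_{\mathcal S^-_f}$ (over $\Omega\setminus\mathcal D_f$).

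To obtain \eqref{2c}, fix $(x,t)\in\partial^{\ast}\mathcal S^-_f$ with $x\in\mathcal D_f$. Since $f$ is approximately continuous at $x$, the density of $\mathcal S^-_f$ at $(x,s)$ equals $1$ for $s<\tilde f(x)$ and $0$ for $s>\tilde f(x)$; as a point of $\partial^{\ast}\mathcal S^-_f$ has density $\tfrac12$, necessarily $t=\tilde f(x)$. Now, applying \eqref{1c} after the change of variables $z=x+r\zeta$ shows that the rescaled difference quotients $\zeta\mapsto r^{-1}\big(f(x+r\zeta)-\tilde f(x)\big)$ converge in $L^1_{\mathrm{loc}}(\mathbb R^{n})$ to $\zeta\mapsto\langle\nabla f(x),\zeta\rangle$, and therefore $r^{-1}\big(\mathcal S^-_f-(x,\tilde f(x))\big)$ converges in $L^1_{\mathrm{loc}}(\mathbb R^{n+1})$ to the half-space $\{(\zeta,\sigma):\sigma<\langle\nabla f(x),\zeta\rangle\}$, whose inner normal is $\big(\nabla f(x),-1\big)\big/\sqrt{1+|\nabla f(x)|^2}$. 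Comparing with the blow-up limit furnished by the structure theorem yields $v^{\mathcal S^-_f}(x,t)=\big(\nabla f(x),-1\big)\big/\sqrt{1+|\nabla f(x)|^2}$.

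For the statement on $v^{\mathcal S^-_f}_{n+1}$ we compute $D_{n+1}\chi_{\mathcal S^-_f}$ directly. Writing $j(x)=(x,\tilde f(x))$ for $x\in\mathcal C_f$, for every $\psi\in C^1_c(\Omega\times\mathbb R_t)$ Fubini's theorem and the fundamental theorem of calculus along each vertical line give $\int_{\Omega\times\mathbb R_t}\chi_{\mathcal S^-_f}\,\partial_s\psi\,dx\,ds=\int_{\mathcal C_f}\psi\big(x,\tilde f(x)\big)\,dx$, so that
$$D_{n+1}\chi_{\mathcal S^-_f}=-\,j_{\#}\big(\mathcal L^n\llcorner\mathcal C_f\big)=-\,j_{\#}\big(\mathcal L^n\llcorner\mathcal D_f\big),$$
the last equality using $\mathcal L^n(\Omega\setminus\mathcal D_f)=0$. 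This measure is carried by $\mathcal D_f\times\mathbb R_t$, so $D_{n+1}\chi_{\mathcal S^-_f}\llcorner\big((\Omega\setminus\mathcal D_f)\times\mathbb R_t\big)=0$; since $D_{n+1}\chi_{\mathcal S^-_f}=v^{\mathcal S^-_f}_{n+1}\,\mathcal H^n\llcorner\partial^{\ast}\mathcal S^-_f$, it follows that $v^{\mathcal S^-_f}_{n+1}=0$ $\mathcal H^n$-a.e. on $\partial^{\ast}\mathcal S^-_f\cap\big((\Omega\setminus\mathcal D_f)\times\mathbb R_t\big)$. Finally, if $f\in W^{1,1}(\Omega)$ then $D^sf=0$, so Theorem~\ref{2s} gives $P(\mathcal S^-_f;B\times\mathbb R_t)=\int_B\sqrt{1+|\nabla f|^2}\,dx$; a comparison of total masses (using \eqref{2c} and the area formula on the graph $j(\mathcal D_f)$, whose $\mathcal H^n$-measure is $\int_\Omega\sqrt{1+|\nabla f|^2}\,dx$) forces $\mathcal H^n\big(\partial^{\ast}\mathcal S^-_f\cap((\Omega\setminus\mathcal D_f)\times\mathbb R_t)\big)=0$, whence \eqref{2c} holds $\mathcal H^n$-a.e. on $\partial^{\ast}\mathcal S^-_f\cap(\Omega\times\mathbb R_t)$.

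The part of the argument that carries actual content is the second paragraph — establishing that $\partial^{\ast}\mathcal S^-_f$ splits, up to $\mathcal H^n$-null sets, into a ``graph part'' over $\mathcal D_f$ (on which \eqref{2c} holds) and a ``vertical part'' over $\Omega\setminus\mathcal D_f$ (on which the normal is horizontal), and in particular that the merely $L^1$-type approximate differentiability \eqref{1c}, rather than classical differentiability, already forces $L^1_{\mathrm{loc}}$-convergence of the rescaled subgraphs to a linear half-space. Everything else reduces to Fubini's theorem, the coarea formula, and the structure theory of sets of finite perimeter, so I do not expect any serious obstacle beyond the careful bookkeeping of these fine properties of $BV$ functions.
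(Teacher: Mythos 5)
This theorem is not proved in the paper at all: it is quoted as background material and attributed to \cite[Sect.~4.1.5, Theorems 4 and 5]{GMS98}, so there is no in-paper argument to compare yours against. Your sketch is, in substance, the standard proof of that result, and its two main components are correct as written: the identity \eqref{2c} via De Giorgi's blow-up at reduced boundary points matched against the $L^1_{\mathrm{loc}}$ blow-up of the subgraph that approximate differentiability \eqref{1c} supplies (your observations that over $x\in\mathcal{D}_f$ the essential boundary meets the vertical line only at $t=\tilde f(x)$, and that $L^1_{\mathrm{loc}}$ convergence of the rescaled difference quotients passes to $L^1_{\mathrm{loc}}$ convergence of the rescaled subgraphs, are exactly the right points); and the vertical statement via the Fubini computation $D_{n+1}\chi_{\mathcal{S}^-_f}=-j_{\#}(\mathcal{L}^n\llcorner\mathcal{D}_f)$, which is indeed carried by $\mathcal{D}_f\times\mathbb{R}_t$.

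The one step I would tighten is the last one. Knowing $\mathcal{H}^n(j(\mathcal{D}_f))=\int_\Omega\sqrt{1+|\nabla f|^2}\,dx=P(\mathcal{S}^-_f;\Omega\times\mathbb{R}_t)$ does not by itself kill the vertical part: for that mass comparison to close you also need $\mathcal{H}^n$-almost all of $j(\mathcal{D}_f)$ to lie in $\partial^{\ast}\mathcal{S}^-_f$ (otherwise the graph could overcount relative to the reduced boundary), and the area formula for the graph of a merely approximately differentiable function is itself a nontrivial fine-properties fact. A cleaner closing, using only what you have already established: the single-point-fiber argument gives $\partial^{\ast}\mathcal{S}^-_f\cap(\mathcal{D}_f\times\mathbb{R}_t)\subset j(\mathcal{D}_f)$, and on this set $|v^{\mathcal{S}^-_f}_{n+1}|=(1+|\nabla f|^2)^{-1/2}$ by \eqref{2c}, so the identity $|D_{n+1}\chi_{\mathcal{S}^-_f}|=j_{\#}(\mathcal{L}^n\llcorner\mathcal{D}_f)$ yields
$$\mathcal{H}^n\bigl(\partial^{\ast}\mathcal{S}^-_f\cap(\mathcal{D}_f\times\mathbb{R}_t)\bigr)=\int_{\mathcal{D}_f\times\mathbb{R}_t}\sqrt{1+|\nabla f(x)|^2}\,d\bigl|D_{n+1}\chi_{\mathcal{S}^-_f}\bigr|(x,t)=\int_\Omega\sqrt{1+|\nabla f|^2}\,dx,$$
which equals the total perimeter by Theorem~\ref{2s} once $D^sf=0$; the $\mathcal{H}^n$-measure of $\partial^{\ast}\mathcal{S}^-_f\cap[(\Omega\setminus\mathcal{D}_f)\times\mathbb{R}_t]$ is then forced to vanish without invoking the area formula at all.
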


In what follows, the essential projection of a set $E\subset\mathbb{R}^{n+1}$ onto $\mathbb{R}^{n-1}\times\mathbb{R}_t$ is defined as
$$\pi_{n-1,n+1}(E)^+=\{(x^{\prime},t)\in\mathbb{R}^{n-1}\times\mathbb{R}_t:\;\ell_E(x^{\prime},t)>0\}.$$
The essential projection $\pi_{n-1}(E)^+$ onto $\mathbb{R}^{n-1}$ is defined similarly.

Finally, we give a theorem concerning one-dimensional sections of sets of finite perimeter. The result is due to Vol'pert \cite{Vol67}. In the present form, it can be easily deduced from \cite[Theorem 3.108]{AFP00}.

\begin{thm}\label{2t}
Let $E$ be a set of finite perimeter in $\Omega$. Then, for $\mathcal{L}^{n-1}$-a.e. $x^{\prime}\in\pi_{n-1}(\Omega)$,
\begin{eqnarray}\label{2u}
E_{x^{\prime}}\; has\;finite\;perimeter\;in\;\Omega_{x^{\prime}},
\end{eqnarray}
\begin{eqnarray}
(\partial^{\ast}E\cap \Omega)_{x^{\prime}}=\partial^{\ast}(E_{x^{\prime}})\cap \Omega_{x^{\prime}},
\end{eqnarray}
\begin{eqnarray}
v^E_n(x^{\prime},y)\neq 0\; for\;every\;y\; such\;that\;(x^{\prime},y)\in\partial^{\ast}E\cap \Omega,
\end{eqnarray}
\begin{equation}\label{2v}
\left\{ \begin{aligned}
\lim_{z\rightarrow y^+}\chi^{\ast}_E(x^{\prime},z)=1,\;\;\lim_{z\rightarrow y^-}\chi^{\ast}_E(x^{\prime},z)=0\;\;{\rm if}\;v_n^E(x^{\prime},y)>0,\\
\lim_{z\rightarrow y^+}\chi^{\ast}_E(x^{\prime},z)=0,\;\;\lim_{z\rightarrow y^-}\chi^{\ast}_E(x^{\prime},z)=1\;\;{\rm if}\;v_n^E(x^{\prime},y)<0.
\end{aligned} \right.
\end{equation}
In particular, there exists a Borel set $\Omega_E\subset\pi_{n-1}(E)^+\cap\pi_{n-1}(\Omega)$ satisfying $\mathcal{L}^{n-1}(\pi_{n-1}(E)^+\cap\pi_{n-1}(\Omega)\setminus \Omega_E)=0$ and such that {\rm (\ref{2u})-(\ref{2v})} hold for every $x^{\prime}\in \Omega_E$.
\end{thm}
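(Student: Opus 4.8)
\textbf{Proof plan for Theorem \ref{2t}.}
The plan is to deduce the statement from the general structure theorem for one-dimensional sections of a $BV$ function, namely \cite[Theorem 3.108]{AFP00}, applied to $u=\chi_E$. First I would recall the setup of that theorem: if $u\in BV(\Omega)$ and we write points as $(x',y)$, then for $\mathcal L^{n-1}$-a.e.\ $x'\in\pi_{n-1}(\Omega)$ the slice $u_{x'}:=u(x',\cdot)$ belongs to $BV(\Omega_{x'})$, the distributional derivative of the slice is the ``slice of the derivative'' (i.e.\ $Du_{x'}=(D_nu)_{x'}$ in the appropriate disintegration sense), the jump set of $u_{x'}$ is exactly the slice of the jump set of $u$, and the one-sided approximate limits of $u_{x'}$ at jump points coincide with the one-sided traces of $u$. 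Since $E$ has finite perimeter in $\Omega$, we have $\chi_E\in BV(\Omega)$, so all of this applies with $u=\chi_E$.

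Next I would translate each conclusion of that abstract slicing theorem into the concrete statements (\ref{2u})--(\ref{2v}). The fact that $u_{x'}=\chi_{E_{x'}}\in BV(\Omega_{x'})$ with finite total variation is precisely (\ref{2u}). For the identification of $\partial^*(E_{x'})$ with $(\partial^*E\cap\Omega)_{x'}$: the reduced boundary of a set of finite perimeter coincides $\mathcal H^{n-1}$-a.e.\ with its essential boundary and with the jump set of its characteristic function; applying the slicing theorem's statement that $J_{u_{x'}}=(J_u)_{x'}$ for a.e.\ $x'$, together with the one-dimensional fact that for a characteristic function in $BV(\mathbb R)$ the jump set, essential boundary and reduced boundary all agree, gives the desired equality of reduced boundaries. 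For the non-vanishing of $v_n^E$ on $\partial^*E\cap\Omega$ over the slice: at a jump point $y$ of $\chi_{E_{x'}}$ the one-dimensional derivative $D\chi_{E_{x'}}$ has an atom, and by the disintegration $Du = \int D u_{x'}\,dx'$ this forces the $n$-th component $D_n\chi_E$ to be nonzero there, hence $v_n^E\ne 0$; I would phrase this via the coarea/slicing identity so that it holds simultaneously with the other properties on a common good set of $x'$. Finally, (\ref{2v}) is exactly the statement that the one-sided approximate limits of $\chi_{E_{x'}}$ at a jump point are $0$ and $1$, with the ordering dictated by the sign of $v_n^E$, which encodes the orientation of the jump; this is the slice version of the trace identity in \cite[Theorem 3.108]{AFP00}, combined with the fact that for a set of finite perimeter the one-sided traces of $\chi_E$ across $\partial^*E$ are $0$ and $1$ (Federer's theorem).

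To obtain the last sentence, I would intersect the finitely many full-measure sets of ``good'' $x'$ arising from each of (\ref{2u})--(\ref{2v}) (each is a full-measure subset of $\pi_{n-1}(\Omega)$, and I would further intersect with $\pi_{n-1}(E)^+$ to discard slices $E_{x'}$ of zero length, which carry no boundary); the intersection $\Omega_E$ is then a Borel set with $\mathcal L^{n-1}\big((\pi_{n-1}(E)^+\cap\pi_{n-1}(\Omega))\setminus\Omega_E\big)=0$ on which all the listed properties hold pointwise in $x'$. Borel measurability of $\Omega_E$ follows since each defining condition can be taken Borel (the exceptional sets in \cite[Theorem 3.108]{AFP00} are Borel, or can be enlarged to Borel null sets).

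\emph{Main obstacle.} The routine part is quoting \cite[Theorem 3.108]{AFP00}; the genuinely delicate point is matching the \emph{orientation} bookkeeping in (\ref{2v})---i.e.\ checking that the sign convention for $v_n^E$ (the $n$-th component of the generalized \emph{inner} normal) corresponds correctly to which one-sided limit of $\chi^*_E(x',\cdot)$ is $0$ and which is $1$. One must be careful that ``inner normal pointing in the $+y$ direction'' ($v_n^E>0$) means $E$ lies above $y$ locally on the slice, hence $\lim_{z\to y^+}\chi^*_E(x',z)=1$ and $\lim_{z\to y^-}\chi^*_E(x',z)=0$, and conversely for $v_n^E<0$; reconciling this with the sign conventions used for jumps of one-dimensional $BV$ functions in \cite{AFP00} (where the derivative measure $D\chi_{E_{x'}}$ has a positive atom when the function jumps upward) is where the argument needs attention, though it is ultimately a bookkeeping check rather than a deep difficulty.
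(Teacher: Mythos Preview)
Your proposal is correct and follows exactly the route the paper indicates: the paper does not give a proof of this theorem but simply records that the result is due to Vol'pert and ``in the present form, it can be easily deduced from \cite[Theorem 3.108]{AFP00}.'' Your sketch of how to specialize that slicing theorem to $u=\chi_E$ and read off (\ref{2u})--(\ref{2v}), including the orientation check for the sign of $v_n^E$, is precisely the deduction the paper has in mind.
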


\subsection{On the Orlicz-Sobolev balls}
Let $\Omega$ be a bounded open subset of $\mathbb{R}^n$. By (\ref{6b}), if $f\in W_0^{1,\Phi}(\Omega)$, then $\left|\nabla f\right|\in L^{\Phi}(\Omega)$. Thus by (\ref{33a}), there exists some $\lambda>0$ such that
$$\int_{\Omega}\Phi\left(\frac{|\nabla f(x)|}{\lambda}\right)dx<\infty.$$
Since $\phi\in\mathcal{N}$ and $\Phi(t)=\max\{\phi(t),\phi(-t)\}$, $t\in [0,\infty)$, there exists some $\lambda>0$ such that for any $u\in S^{n-1}$,
\begin{eqnarray}
\int_{\Omega}\phi\left(\frac{u\cdot\nabla f(x)}{\lambda}\right)dx\leq \int_{\Omega}\Phi\left(\frac{|\nabla f(x)|}{\lambda}\right)dx<\infty.
\end{eqnarray}
 Thus, we can define the Orlicz-Sobolev ball $B_{\phi}(f)$ of $f\in W_0^{1,\Phi}(\Omega)$ as the unit ball of the $n$-dimensional Banach space whose
norm is given by
\begin{eqnarray}\label{2ff}
\|z\|_{f,\phi}:=\inf\left\{\lambda> 0:\;\frac{1}{|\Omega|}\int_{\Omega}\phi\left(\frac{z\cdot\nabla f(x)}{\lambda}\right)dx\leq 1\right\},\;z\in\mathbb{R}^n.
\end{eqnarray}

And the volume of the Orlicz-Sobolev ball is given by
\begin{eqnarray}
|B_{\phi}(f)|=\frac{1}{n}\int_{S^{n-1}}\|v\|_{f,\phi}^{-n}dv,
\end{eqnarray}
where $dv$ denotes the spherical Lebesgue measure.

Since $f\in W_0^{1,\Phi}(\Omega)$, it is impossible that there exists some $u_0\in S^{n-1}$ such that $\nabla f(x)\cdot u_0\geq0$  for almost all $x\in \Omega$. Since $\phi$ is strictly increasing on $[0,\infty)$ or strictly decreasing on $(-\infty,0]$, it follows that for $z\neq0$ the function
$$\lambda\mapsto\frac{1}{|\Omega|}\int_{\Omega}\phi\left(\frac{z\cdot\nabla f(x)}{\lambda}\right)dx$$
is strictly decreasing in $(0,\infty)$. Thus, we have the following lemma.
\begin{lem}\label{2k}
Let $f\in W_0^{1,\Phi}(\Omega)$ and $z_0\in\mathbb{R}^n\backslash \{0\}$. Then
$$\frac{1}{|\Omega|}\int_{\Omega}\phi\left(\frac{z_0\cdot\nabla f(x)}{\lambda_0}\right)dx=1$$
if and only if
$$\|z_0\|_{f,\phi}=\lambda_0.$$
\end{lem}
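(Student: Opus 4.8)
Fix $z_0\neq 0$ and set
\[
g(\lambda):=\frac1{|\Omega|}\int_\Omega\phi\!\left(\frac{z_0\cdot\nabla f(x)}{\lambda}\right)dx\qquad(\lambda>0),
\]
so that, by the definition \eqref{2ff}, one has $\|z_0\|_{f,\phi}=\inf\{\lambda>0:g(\lambda)\le 1\}$. The plan is to establish that $g$ is strictly decreasing and continuous on $(0,\infty)$, with $g(\lambda)\to 0$ as $\lambda\to\infty$ and $g(\lambda)\to+\infty$ as $\lambda\to 0^+$. Granting this, $g(\lambda)=1$ has a unique solution $\mu_0\in(0,\infty)$, the sublevel set $\{\lambda>0:g(\lambda)\le 1\}$ equals $[\mu_0,\infty)$, and hence $\|z_0\|_{f,\phi}=\mu_0$. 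Both implications of the lemma then follow immediately, since "$g(\lambda_0)=1$" and "$\lambda_0=\|z_0\|_{f,\phi}$" each say exactly "$\lambda_0=\mu_0$".

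Strict monotonicity of $g$ has already been recorded in the paragraph preceding the lemma; it rests on $\phi\in\mathcal N$ together with the fact, valid because $f\in W_0^{1,\Phi}(\Omega)$, that $z_0\cdot\nabla f$ is positive on a set of positive $\mathcal L^n$-measure and negative on a set of positive $\mathcal L^n$-measure. For the other properties I would first note that, for each fixed $x$, the function $\lambda\mapsto\phi\bigl(z_0\cdot\nabla f(x)/\lambda\bigr)$ is non-increasing and continuous on $(0,\infty)$, tends to $\phi(0)=0$ as $\lambda\to\infty$, and tends to $+\infty$ as $\lambda\to 0^+$ whenever $z_0\cdot\nabla f(x)$ lies in the half-line on which $\phi$ is strictly monotone (there $\phi$ is unbounded, by convexity). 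The monotonicity in $\lambda$ is immediate from convexity of $\phi$ and $\phi(0)=0$, by writing $z_0\cdot\nabla f(x)/\lambda$ as a convex combination of $z_0\cdot\nabla f(x)/\mu$ (with $\mu<\lambda$) and of $0$. The monotone convergence theorem then transfers these facts to $g$, yielding continuity of $g$, the limit $0$ at $+\infty$, and the limit $+\infty$ at $0^+$ (the latter because the integrand blows up on a set of positive measure).

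The step that deserves care is the continuity of $g$ — concretely, the claim that $g$ does not drop past the value $1$ discontinuously, so that the infimum defining $\|z_0\|_{f,\phi}$ is attained with value exactly $1$. For the one-sided limit from above this is an unconditional consequence of monotone convergence for increasing integrands; for the limit from below at the critical level one additionally needs $g$ to be finite on a small interval to the left of it, a point I would pin down using the integrability afforded by $\phi(t)\le\Phi(|t|)$ and $|\nabla f|\in L^{\Phi}(\Omega)$ together with the convexity of $\phi$. I expect this to be the only genuine obstacle; the rest is bookkeeping with the infimum in \eqref{2ff}, and indeed the whole statement is an instance of the standard principle that a Luxemburg-type functional is attained precisely at the level where the associated modular equals $1$.
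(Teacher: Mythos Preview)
Your proposal is correct and follows exactly the route the paper takes: the paper's entire ``proof'' is the sentence immediately preceding the lemma, which records that $\lambda\mapsto g(\lambda)$ is strictly decreasing on $(0,\infty)$ and then declares the lemma. You supply the details the paper omits (continuity and the endpoint limits of $g$), and you correctly single out left-continuity of $g$ at the critical level as the one point requiring care --- a subtlety the paper passes over without comment.
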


 The following Lemma \ref{2dd}, Lemma \ref{2a} and Lemma \ref{1d}  demonstrate the affine invariance of $\mathcal{E}_{\phi}(f)$, the non-negativity and boundedness of $\|\cdot\|_{f,\phi}$, respectively. Since their proofs are the same as the proofs of Lemma 4.2--Lemma 4.4 in \cite{LYJ17}, we omit their proofs.

\begin{lem}\label{2dd} (\cite[Lemma 4.2]{LYJ17})
If  $f\in W_0^{1,\Phi}(\Omega)$, then
$\mathcal{E}_{\phi}(f)$
is invariant under $SL(n)$ transformations and translations.
\end{lem}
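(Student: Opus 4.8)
The plan is to show that the Orlicz-Sobolev ball $B_{\phi}(f)$ transforms in a controlled way under the maps in question, and then to read off the invariance of its volume, hence of $\mathcal{E}_{\phi}(f)=|B_{\phi}(f)|^{-1/n}$. The only tool needed is the linear change-of-variables formula applied to the integral defining $\|\cdot\|_{f,\phi}$ in (\ref{2ff}), together with the observation that the normalizing factor $|\Omega|^{-1}$ scales exactly like the Lebesgue measure of the transformed domain.

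First I would treat translations. Given $x_0\in\mathbb{R}^n$, put $g(x)=f(x+x_0)$ on $\Omega-x_0$; then $g\in W_0^{1,\Phi}(\Omega-x_0)$ with $\nabla g(x)=\nabla f(x+x_0)$, and $|\Omega-x_0|=|\Omega|$. The substitution $y=x+x_0$ shows that for every $\lambda>0$ and every $z\in\mathbb{R}^n$ the integral in (\ref{2ff}) computed for $g$ equals the one computed for $f$; hence $\|z\|_{g,\phi}=\|z\|_{f,\phi}$, so $B_{\phi}(g)=B_{\phi}(f)$ and $\mathcal{E}_{\phi}(g)=\mathcal{E}_{\phi}(f)$.

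Next I would treat $A\in SL(n)$, setting $g(x)=f(Ax)$ on $A^{-1}\Omega$. The chain rule for the approximate gradient gives $\nabla g(x)=A^{\top}\nabla f(Ax)$, and $|A^{-1}\Omega|=|\det A^{-1}|\,|\Omega|=|\Omega|$ since $|\det A|=1$. Using $z\cdot A^{\top}\nabla f(Ax)=(Az)\cdot\nabla f(Ax)$ and substituting $y=Ax$, I obtain for every $\lambda>0$ and $z\in\mathbb{R}^n$
\begin{eqnarray*}
\frac{1}{|A^{-1}\Omega|}\int_{A^{-1}\Omega}\phi\left(\frac{z\cdot\nabla g(x)}{\lambda}\right)dx
=\frac{1}{|\Omega|}\int_{\Omega}\phi\left(\frac{(Az)\cdot\nabla f(y)}{\lambda}\right)dy .
\end{eqnarray*}
By Lemma \ref{2k} (or directly from (\ref{2ff})) this forces $\|z\|_{g,\phi}=\|Az\|_{f,\phi}$ for all $z$, so $B_{\phi}(g)=\{z:\|Az\|_{f,\phi}\le 1\}=A^{-1}B_{\phi}(f)$, whence $|B_{\phi}(g)|=|\det A^{-1}|\,|B_{\phi}(f)|=|B_{\phi}(f)|$ and $\mathcal{E}_{\phi}(g)=\mathcal{E}_{\phi}(f)$. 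Composing the two cases then covers an arbitrary affine map $x\mapsto Ax+x_0$ with $A\in SL(n)$.

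The only delicate — but still routine — points I anticipate are verifying that $g=f\circ A$ lies in $W_0^{1,\Phi}(A^{-1}\Omega)$, so that $B_{\phi}(g)$ is well defined in the sense of the discussion preceding Lemma \ref{2k}, and that the chain rule for the approximate gradient is valid under a linear substitution; both follow from the change-of-variables formula and definition (\ref{1c}). As a variant that avoids the explicit identification $B_{\phi}(g)=A^{-1}B_{\phi}(f)$, one may argue instead from the representation $|B_{\phi}(f)|=\tfrac1n\int_{S^{n-1}}\|v\|_{f,\phi}^{-n}dv$, applying the substitution $v\mapsto Av/|Av|$ on $S^{n-1}$ and noting that the polar-coordinate Jacobian is exactly compensated by the $1$-homogeneity of $\|\cdot\|_{f,\phi}$; this again yields $|B_{\phi}(g)|=|B_{\phi}(f)|$.
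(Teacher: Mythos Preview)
Your proposal is correct and follows the standard route: compute how $\|\cdot\|_{f,\phi}$ transforms under $x\mapsto Ax+x_0$ via the change-of-variables formula, deduce $B_{\phi}(g)=A^{-1}B_{\phi}(f)$, and conclude that the volume (hence $\mathcal{E}_{\phi}$) is unchanged. The paper does not actually supply its own proof here --- it cites \cite[Lemma 4.2]{LYJ17} and omits the argument --- but the computation you give is precisely the one carried out in that reference, so there is nothing to compare.
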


\begin{lem}\label{2a} (\cite[Lemma 4.3]{LYJ17})
If $f\in W_0^{1,\Phi}(\Omega)$, then $\|\cdot\|_{f,\phi}$ defines a norm on the Banach space $(\mathbb{R}^n,\|\cdot\|_{f,\phi})$. In particular, $\|v\|_{f,\phi}>0$ for any $v\in S^{n-1}$.
\end{lem}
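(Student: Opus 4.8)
The plan is to check, directly from the definition \eqref{2ff}, that $\|\cdot\|_{f,\phi}$ is finite, positively homogeneous, subadditive, and strictly positive on $S^{n-1}$; the last item is the real content, the rest being soft. Finiteness is essentially already in hand: the paragraph preceding Lemma \ref{2k} produces, for each $z$, a $\lambda_1>0$ with $\int_\Omega\phi\bigl(\tfrac{z\cdot\nabla f}{\lambda_1}\bigr)\,dx<\infty$, and since $\phi$ is convex with $\phi(0)=0$ one has $\phi\bigl(\tfrac{z\cdot\nabla f}{\lambda}\bigr)\le\tfrac{\lambda_1}{\lambda}\,\phi\bigl(\tfrac{z\cdot\nabla f}{\lambda_1}\bigr)$ for $\lambda\ge\lambda_1$, so the defining average falls below $1$ once $\lambda$ is large; thus $\|z\|_{f,\phi}<\infty$, while $\|0\|_{f,\phi}=0$ is immediate from $\phi(0)=0$.

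For positive homogeneity, the substitution $\lambda=\alpha\mu$ in \eqref{2ff} gives $\|\alpha z\|_{f,\phi}=\alpha\|z\|_{f,\phi}$ for every $\alpha>0$. For subadditivity I would use convexity of $\phi$: given $z_1,z_2$ and $\varepsilon>0$, choose $\lambda_i\le\|z_i\|_{f,\phi}+\varepsilon$ with $\tfrac1{|\Omega|}\int_\Omega\phi\bigl(\tfrac{z_i\cdot\nabla f}{\lambda_i}\bigr)\,dx\le1$, write $\tfrac{(z_1+z_2)\cdot\nabla f}{\lambda_1+\lambda_2}$ as the convex combination $\tfrac{\lambda_1}{\lambda_1+\lambda_2}\,\tfrac{z_1\cdot\nabla f}{\lambda_1}+\tfrac{\lambda_2}{\lambda_1+\lambda_2}\,\tfrac{z_2\cdot\nabla f}{\lambda_2}$, then apply $\phi$ and integrate to obtain $\tfrac1{|\Omega|}\int_\Omega\phi\bigl(\tfrac{(z_1+z_2)\cdot\nabla f}{\lambda_1+\lambda_2}\bigr)\,dx\le1$; hence $\|z_1+z_2\|_{f,\phi}\le\lambda_1+\lambda_2\le\|z_1\|_{f,\phi}+\|z_2\|_{f,\phi}+2\varepsilon$, and let $\varepsilon\to0$. (Alternatively one may invoke the equality $\tfrac1{|\Omega|}\int_\Omega\phi\bigl(\tfrac{z\cdot\nabla f}{\|z\|_{f,\phi}}\bigr)\,dx=1$ of Lemma \ref{2k}.)

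The crux is strict positivity, and it is here that $f\in W_0^{1,\Phi}(\Omega)$ — not merely $f\in W^{1,\Phi}(\Omega)$ — is used. As recalled just before Lemma \ref{2k}, because $f$ has zero boundary values and $\Omega$ is bounded, no $u_0\in S^{n-1}$ satisfies $u_0\cdot\nabla f\ge0$ $\mathcal{L}^n$-a.e.\ in $\Omega$ (assuming, as is implicit, $f\not\equiv0$); applying this to $v$ and to $-v$ shows that, for each fixed $v\in S^{n-1}$, both $E_+=\{x\in\Omega:v\cdot\nabla f(x)>0\}$ and $E_-=\{x\in\Omega:v\cdot\nabla f(x)<0\}$ have positive measure. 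Now I use $\phi\in\mathcal{N}$. If $\phi$ is strictly increasing on $[0,\infty)$, then $\phi(1)>0$ and convexity with $\phi(0)=0$ give $\phi(s)\ge s\,\phi(1)$ for $s\ge1$, so $\phi\bigl(\tfrac{v\cdot\nabla f(x)}{\lambda}\bigr)\to+\infty$ as $\lambda\to0^+$ for every $x\in E_+$; since $\phi\ge0$, Fatou's lemma forces $\tfrac1{|\Omega|}\int_\Omega\phi\bigl(\tfrac{v\cdot\nabla f}{\lambda}\bigr)\,dx\to+\infty$, whence this average exceeds $1$ for all $\lambda$ in some interval $(0,\lambda_\ast)$ and therefore $\|v\|_{f,\phi}\ge\lambda_\ast>0$. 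If instead $\phi$ is strictly decreasing on $(-\infty,0]$, the same argument applies on $E_-$ using $\phi(s)\ge|s|\,\phi(-1)\to+\infty$ as $s\to-\infty$. Thus $\|v\|_{f,\phi}>0$ for all $v\in S^{n-1}$, and by positive homogeneity $\|z\|_{f,\phi}=|z|\,\bigl\|\tfrac{z}{|z|}\bigr\|_{f,\phi}>0$ for every $z\ne0$.

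I expect the main obstacle to be precisely this positivity step: it is not a formal consequence of $\phi\in\mathcal{N}$ alone — for a generic weakly differentiable $f$, $\|v\|_{f,\phi}$ can vanish along a direction in which $\nabla f$ is ``one-sided'' — and it requires bringing together two ingredients, the absence of such one-sided directions (built into $W_0^{1,\Phi}(\Omega)$ through the zero boundary values) and the unbounded growth of $\phi$ on exactly the side selected by the definition of $\mathcal{N}$. The remaining axioms are routine. Together with the boundedness of $\|\cdot\|_{f,\phi}$ on $S^{n-1}$ (Lemma \ref{1d}), the four properties above say that $B_\phi(f)$ is a bounded convex neighbourhood of the origin whose gauge is $\|\cdot\|_{f,\phi}$; note that symmetry of this ``norm'' is not asserted, and indeed fails for $\phi\in\mathcal{N}$ that are not even, which is the generality needed for the asymmetric affine P\'olya-Szeg\"o principle.
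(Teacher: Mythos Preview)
Your argument is correct. The paper itself omits the proof entirely, referring to \cite[Lemma~4.3]{LYJ17}, so there is no in-paper argument to compare against; your direct verification of the gauge axioms from the definition \eqref{2ff} is the natural approach and each step is sound. The positivity step is indeed the only substantive one, and you have handled it properly: the observation recorded just before Lemma~\ref{2k} (that $u_0\cdot\nabla f\ge 0$ a.e.\ is impossible for nontrivial $f\in W_0^{1,\Phi}(\Omega)$) gives both $E_+$ and $E_-$ positive measure, and then the one-sided strict monotonicity in $\mathcal{N}$ together with the linear lower bound $\phi(s)\ge s\,\phi(1)$ (resp.\ $\phi(s)\ge|s|\,\phi(-1)$) and Fatou forces the defining average to blow up as $\lambda\to0^+$. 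It is worth noting that the very next result in the paper, Lemma~\ref{1d}, gives the explicit lower bound $\|v\|_{f,\phi}\ge\int_\Omega f\,dx\big/\bigl(c_\phi|\Omega|\,\mathrm{diam}(\Omega)\bigr)$, which for nonnegative $f\not\equiv0$ yields positivity directly and is presumably how the cited reference organises the argument. Your closing remark that $\|\cdot\|_{f,\phi}$ is only an asymmetric (Minkowski) gauge when $\phi$ is not even is accurate and worth keeping.
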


\begin{lem}\label{1d} {(\cite[Lemma 4.4]{LYJ17})}
If $f\in W_0^{1,\Phi}(\Omega)$ and
\begin{eqnarray}\label{1e}
c_{\phi}=\max\left\{c>0:\max\{\phi(c),\phi(-c)\}\leq 1\right\},
\end{eqnarray}
then for any $v\in S^{n-1}$, we have
\begin{eqnarray}
\frac{\int_{\Omega}f(x)dx}{c_{\phi}|\Omega|{\rm diam}(\Omega)}\leq \|v\|_{f,\phi}\leq \frac{\sup\{|\nabla f(x)|:x\in \Omega\}}{c_{\phi}},
\end{eqnarray}
where ${\rm diam}(\Omega):=\sup\{|x-y|:\;x,y\in\Omega\}$ denotes the diameter of $\Omega$.
\end{lem}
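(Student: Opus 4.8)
The plan is to establish the two inequalities separately; each is an elementary consequence of the definition \eqref{2ff} of the Luxemburg-type norm $\|\cdot\|_{f,\phi}$ together with convexity of $\phi$, and needs no input from convex geometry. Two preliminary remarks. First, since $\Phi$ is convex with $\Phi(0)=0$ and strictly increasing on $[0,\infty)$, it tends to $+\infty$; hence $c_{\phi}$ is well defined, $0<c_{\phi}<\infty$, and $\Phi(c_{\phi})=\max\{\phi(c_{\phi}),\phi(-c_{\phi})\}=1$. Second, by Lemma \ref{6g}, $f\in W_0^{1,1}(\Omega)$, and we may assume $f\ge 0$: if $\int_{\Omega}f\le 0$ the lower bound holds trivially because $\|v\|_{f,\phi}>0$ (Lemma \ref{2a}), and if $\int_{\Omega}f>0$ we may replace $f$ by $f_+\in W_0^{1,1}(\Omega)$, which only decreases the left-hand side while $(\nabla_vf_+)_+\le(\nabla_vf)_+$ a.e.

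For the upper bound, set $S=\sup\{|\nabla f(x)|:x\in\Omega\}$ (the inequality being trivial when $S=\infty$) and take $\lambda=S/c_{\phi}$. For a.e.\ $x\in\Omega$ one has $|\nabla_vf(x)|/\lambda\le|\nabla f(x)|/\lambda\le c_{\phi}$, so $\nabla_vf(x)/\lambda\in[-c_{\phi},c_{\phi}]$, and convexity of $\phi$ then gives $\phi(\nabla_vf(x)/\lambda)\le\max\{\phi(-c_{\phi}),\phi(c_{\phi})\}=1$. Integrating over $\Omega$ yields $\frac1{|\Omega|}\int_{\Omega}\phi(\nabla_vf/\lambda)\,dx\le 1$, so $\|v\|_{f,\phi}\le\lambda=S/c_{\phi}$ by \eqref{2ff}.

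For the lower bound, I would first record the one-dimensional estimate $\int_{\Omega}f\,dx\le{\rm diam}(\Omega)\int_{\Omega}(\nabla_vf)_+\,dx$: decomposing $\mathbb{R}^n$ along direction $v$ and its orthogonal complement, for a.e.\ line $L$ parallel to $v$ the restriction of (the zero-extension of) $f$ to $L$ is absolutely continuous, has support of $\mathcal{L}^1$-measure at most ${\rm diam}(\Omega)$, and tends to $0$ at the ends of $L$; so along $L$, $\sup f\le\int(\nabla_vf)_+$ by the fundamental theorem of calculus, whence $\int_L f\le{\rm diam}(\Omega)\int_L(\nabla_vf)_+$, and Fubini gives the claim. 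Next, fix any $\lambda>0$ with $\frac1{|\Omega|}\int_{\Omega}\phi(\nabla_vf/\lambda)\,dx\le 1$. Since $\phi\ge 0$ and $\phi(0)=0$, pointwise a.e.\ $\phi((\nabla_vf)_+/\lambda)\le\phi(\nabla_vf/\lambda)$ and $\phi(-(\nabla_vf)_-/\lambda)\le\phi(\nabla_vf/\lambda)$; applying Jensen's inequality for the convex function $\phi$ and the probability measure $dx/|\Omega|$ on $\Omega$, and writing $T=\frac1{|\Omega|\lambda}\int_{\Omega}(\nabla_vf)_+\,dx\ge 0$, we get $\phi(T)\le 1$ and $\phi(-T)\le 1$ — here one uses $\int_{\Omega}(\nabla_vf)_+\,dx=\int_{\Omega}(\nabla_vf)_-\,dx$, which holds since $\int_{\Omega}\nabla f\,dx=0$ for $f\in W_0^{1,1}(\Omega)$. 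Hence $\Phi(T)=\max\{\phi(T),\phi(-T)\}\le 1$, so $T\le c_{\phi}$ by the definition of $c_{\phi}$, i.e.\ $\int_{\Omega}(\nabla_vf)_+\,dx\le c_{\phi}|\Omega|\lambda$. Combined with the one-dimensional estimate, $\int_{\Omega}f\,dx\le c_{\phi}|\Omega|{\rm diam}(\Omega)\lambda$, and letting $\lambda$ decrease to $\|v\|_{f,\phi}$ through admissible values gives the lower bound.

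I do not expect a real obstacle here; this is exactly the type of routine estimate the paper omits on purpose. The two points that need a little care are the absolute-continuity-on-lines property of $W^{1,1}$ functions invoked in the one-dimensional estimate, and the identity $\int_{\Omega}\nabla f\,dx=0$, which is legitimate because the zero-extension of $f\in W_0^{1,1}(\Omega)$ lies in $W^{1,1}(\mathbb{R}^n)$ and has compact support, so one may test its weak gradient against a smooth cutoff equal to $1$ near ${\rm supp}\,f$. The one genuinely useful idea is to split $\nabla_vf$ into positive and negative parts, which have equal integral over $\Omega$, and apply Jensen to each separately; this is precisely what makes the constant $c_{\phi}$ appear.
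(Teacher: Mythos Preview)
Your argument is correct; the paper itself omits the proof and simply cites \cite[Lemma 4.4]{LYJ17}, so there is nothing to compare against here. One small wording slip: when reducing to $f_+$ you write that this ``only decreases the left-hand side,'' but in fact $\int_\Omega f_+\ge\int_\Omega f$; the point is rather that $\int_\Omega f\le\int_\Omega f_+\le{\rm diam}(\Omega)\int_\Omega(\nabla_v f_+)_+\le{\rm diam}(\Omega)\int_\Omega(\nabla_v f)_+$, which is exactly what your inequality $(\nabla_v f_+)_+\le(\nabla_v f)_+$ is there for.
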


The following lemma shows that the Orlicz-Sobolev  ball operator $B_{\phi}: W_0^{1,\Phi}(\Omega)\rightarrow \mathcal{K}_o^n$ is in some sense weakly continuous.
\begin{lem}\label{2x}
Let $f_i\in W_0^{1,\Phi}(\Omega_i)$, $i=0,1,2,\dots$. If
\begin{eqnarray}\label{1h}
f_i\rightharpoonup f_0,\; weakly\;in\;W^{1,1}(\mathbb{R}^n),
\end{eqnarray}
then there exists a subsequence of  $\{B_{\phi}(f_i)\}_{i=1}^{\infty}$, denoted by   $\{B_{\phi}(f_i)\}_{i=1}^{\infty}$ as well, and a convex body $K_0$ such that $o\in K_0$,
\begin{eqnarray}\label{1j}
\lim_{i\rightarrow \infty}\delta(B_{\phi}(f_i),K_0)=0
\end{eqnarray}
and
\begin{eqnarray}\label{1k}
K_0\subset B_{\phi}(f_0).
\end{eqnarray}
\end{lem}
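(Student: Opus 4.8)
The plan is to exploit the two-sided bounds on $\|\cdot\|_{f,\phi}$ from Lemma \ref{1d} to get a Blaschke-selection argument going, and then to identify the limit using Lemma \ref{2k} together with weak lower semicontinuity of convex integral functionals. First I would note that since the $f_i$ converge weakly in $W^{1,1}(\mathbb{R}^n)$, the norms $\|f_i\|_{W^{1,1}}$ are uniformly bounded, and (after noting that all $\Omega_i$ may be taken inside a fixed large ball, or arguing directly with $\int f_i$ and $\sup|\nabla f_i|$ replaced by the relevant $L^1$-type quantities) the estimates in Lemma \ref{1d} show that there are constants $0<r\le R<\infty$, independent of $i$, with $r\le \|v\|_{f_i,\phi}\le R$ for all $v\in S^{n-1}$ and all $i$; equivalently $B_{\phi}(f_i)$ all lie between two fixed balls centered at the origin. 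By the Blaschke selection theorem we may pass to a subsequence (not relabeled) such that $B_{\phi}(f_i)\to K_0$ in the Hausdorff metric for some convex body $K_0$ with $o\in K_0$ (in fact $o\in\operatorname{int}K_0$). This gives \eqref{1j}.

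The substantive point is \eqref{1k}, i.e. $K_0\subset B_{\phi}(f_0)$. Equivalently, I want to show $\|z\|_{f_0,\phi}\le \|z\|_{K_0}$ for every $z$, where $\|\cdot\|_{K_0}$ is the gauge of $K_0$; and since Hausdorff convergence of bodies sandwiched between fixed balls implies locally uniform convergence of the gauge functions, $\|z\|_{K_0}=\lim_i\|z\|_{f_i,\phi}$. So fix $z\ne 0$ and set $\lambda_i:=\|z\|_{f_i,\phi}$, $\lambda_0:=\lim_i\lambda_i=\|z\|_{K_0}\in[r,R]$. By Lemma \ref{2k}, for each $i$,
\begin{eqnarray*}
\frac{1}{|\Omega_i|}\int_{\mathbb{R}^n}\phi\!\left(\frac{z\cdot\nabla f_i(x)}{\lambda_i}\right)dx=1,
\end{eqnarray*}
where we integrate over all of $\mathbb{R}^n$ using that $\nabla f_i$ vanishes outside $\Omega_i$ and $\phi(0)=0$. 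I would then pass to the limit: the functional $g\mapsto \int_{\mathbb{R}^n}\phi\!\big(z\cdot g(x)/\lambda_0\big)dx$ is convex and (using $\phi\ge 0$) lower semicontinuous with respect to weak $L^1$ convergence of $g$, so from $\nabla f_i\rightharpoonup\nabla f_0$ weakly in $L^1$ and $\lambda_i\to\lambda_0$ one obtains
\begin{eqnarray*}
\frac{1}{|\Omega_0|}\int_{\mathbb{R}^n}\phi\!\left(\frac{z\cdot\nabla f_0(x)}{\lambda_0}\right)dx\le \liminf_{i\to\infty}\frac{1}{|\Omega_i|}\int_{\mathbb{R}^n}\phi\!\left(\frac{z\cdot\nabla f_i(x)}{\lambda_i}\right)dx=1
\end{eqnarray*}
(some care with $|\Omega_i|$ versus $|\Omega_0|$ is needed; if all $f_i$ share the domain $\Omega$, as in the applications via Remark \ref{19b}, this is immediate, and in general one absorbs $|\Omega_i|$ into the measure or normalizes). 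Since $\frac{1}{|\Omega_0|}\int_{\Omega_0}\phi(z\cdot\nabla f_0/\lambda)\,dx$ is strictly decreasing in $\lambda$ (as recorded just before Lemma \ref{2k}), the inequality $\le 1$ forces $\|z\|_{f_0,\phi}\le\lambda_0=\|z\|_{K_0}$. As $z$ was arbitrary, $B_{\phi}(f_0)\supset K_0$.

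The main obstacle I anticipate is the lower semicontinuity step: weak $L^1$ convergence of gradients is delicate, and one must be sure that $\phi\big(z\cdot\nabla f_i/\lambda_i\big)$ (with the varying parameter $\lambda_i$) is handled correctly — the clean way is to invoke the standard theorem that for a nonnegative convex integrand the integral functional is sequentially weakly lower semicontinuous on $L^1$, after first reducing to a fixed $\lambda$ by the elementary estimate comparing $\phi(s/\lambda_i)$ and $\phi(s/\lambda_0)$ for $\lambda_i$ near $\lambda_0$, using convexity and $\phi(0)=0$. A secondary technical nuisance is bookkeeping the possibly different domains $\Omega_i$; in the use made of this lemma the domain is fixed, so one may simply state and prove it in that setting, or remark that the general case follows by the same computation once one observes the $f_i$ are uniformly supported in a bounded set.
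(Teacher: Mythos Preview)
Your proposal is correct and essentially matches the paper's proof: uniform boundedness of the $B_\phi(f_i)$ via the lower bound in Lemma \ref{1d}, Blaschke selection, then weak lower semicontinuity of the convex gradient integral combined with Lemma \ref{2k} to conclude $\|z\|_{f_0,\phi}\le\|z\|_{K_0}$. Two minor remarks: the paper handles the varying $\lambda_i$ cleanly by rescaling $\tilde f_i:=\bar f_i/\lambda_i$ so that $\tilde f_i\rightharpoonup \bar f_0/\lambda_*$ weakly in $W^{1,1}$ and the integrand becomes fixed; and only the \emph{lower} bound of Lemma \ref{1d} is actually available and needed (the upper bound there involves $\sup|\nabla f_i|$, which need not be finite), so the paper asserts only $o\in K_0$, not $o\in\operatorname{int}K_0$.
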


\begin{proof}
For $u_0\in S^{n-1}$, let
\begin{eqnarray}\label{1f}
\|u_0\|_{f_i,\phi}=\lambda_i,
\end{eqnarray}
and note that Lemma \ref{1d} gives
\begin{eqnarray}\label{1g}
0<\frac{\int_{\Omega_i}f_i(x)dx}{c_{\phi}|\Omega_i|{\rm diam}(\Omega)}\leq\lambda_i.
\end{eqnarray}
Moreover, by (\ref{1h}), we have
\begin{eqnarray}\label{1i}
\lim_{i\rightarrow\infty}\frac{\int_{\Omega_i}f_i(x)dx}{c_{\phi}|\Omega_i|{\rm diam}(\Omega)}=\frac{\int_{\Omega_0}f_0(x)dx}{c_{\phi}|\Omega_0|{\rm diam}(\Omega_0)}>0,
\end{eqnarray}
which implies that there exists a real number $m>0$ such that $\|u\|_{f_i,\phi}>m$ for any $u\in S^{n-1}$ and any positive integer $i$. Thus the radial functions  $\rho(B_{\phi}(f_i),u)<\frac{1}{m}$ for any $u\in S^{n-1}$ and any positive integer $i$. By the Blaschke selection theorem
(see \cite[Theorem 1.8.7]{Schneider13}), there exists a subsequence of $\{B_{\phi}(f_i)\}_{i=1}^{\infty}$,  denoted by   $\{B_{\phi}(f_i)\}_{i=1}^{\infty}$ as well, that converges to a convex body $K_0\in\mathcal{K}^n$.
By Lemma \ref{2a}, $\|u\|_{f_i,\phi}>0$ for any $u\in S^{n-1}$ and any positive integer $i$. Thus $o\in K_0$.

Let $\|u_0\|_{K_0}=\lambda_{\ast}$. By (\ref{1j}) and (\ref{1f}), we have
\begin{eqnarray}
\lim_{i\rightarrow \infty}\lambda_i=\lambda_{\ast}.
\end{eqnarray}

Let $\bar{f_i}$ denote the continuation of $f$ by $0$ outside $\Omega_i$ and $\tilde{f_i}=\bar{f_i}/\lambda_i$. Since $\lambda_i \rightarrow\lambda_{\ast}$ and $\bar{f}_i\rightharpoonup\bar{f}_0$  weakly in  $W^{1,1}(\mathbb{R}^n)$, we have
\begin{eqnarray}\label{1l}
\tilde{f}_i\rightharpoonup \bar{f}_0/\lambda_{\ast}\;\;{\rm weakly\;in}\;\;W^{1,1}(\mathbb{R}^n).
\end{eqnarray}

The fact that $\|u_0\|_{f_i,\phi}=\lambda_i$, together with Lemma \ref{2k}, shows that
\begin{eqnarray}\label{1m}
\frac{1}{|\Omega_i|}\int_{\mathbb{R}^n}\phi\left(u_0\cdot \nabla\tilde{f}_i(x)\right)dx=1\;{\rm for\;all}\;i.
\end{eqnarray}

Since $\phi$ is a convex function, by \cite[Theorem 1 in P.19]{Evans90}, the convex gradient integral
$$\frac{1}{|\Omega|}\int_{\mathbb{R}^n}\phi\left(u_0\cdot \nabla\bar{f}(x)\right)dx$$
is lower semicontinuous with respect to weak convergence in $W^{1,1}(\mathbb{R}^n)$. By (\ref{1l}) and (\ref{1m}), we have
$$\frac{1}{|\Omega_0|}\int_{\mathbb{R}^n}\phi\left(\frac{u_0\cdot \nabla\bar{f}_0(x)}{\lambda_{\ast}}\right)dx\leq\lim_{i\rightarrow \infty}\frac{1}{|\Omega_i|}\int_{\mathbb{R}^n}\phi\left(u_0\cdot \nabla\tilde{f}_i(x)\right)dx= 1.$$

This, together with the definition (\ref{2ff}) yields
\begin{eqnarray}\label{1n}
\|u_0\|_{f_0,\phi}\leq\lambda_{\ast}=\|u_0\|_{K_0}.
\end{eqnarray}
By (\ref{1n}) and the arbitrariness of $u_0\in S^{n-1}$,  we have $K_0\subset B_{\phi}(f_0)$.
\end{proof}

\section{Proof of Theorem \ref{2.1}\;-- \ref{2.3}}\label{s4}
\begin{lem}\cite[Proposition 2.3.]{CF06}\label{3f}
Let $\Omega$ be a bounded open subset of $\mathbb{R}^n$. Let $f$ be a nonnegative function from $W_0^{1,1}(\Omega)$. Then for $\mathcal{L}^{n-1}$-a.e. $x^{\prime}\in\pi_{n-1}(\Omega)$,
\begin{eqnarray}
&&\mathcal{L}^1\left(\{y:\nabla_yf(x^{\prime},y)=0,\;t<f(x^{\prime},y)<M_f(x^{\prime})\}\right)\nonumber\\
&=&\mathcal{L}^1\left(\{y:\nabla_yf^s(x^{\prime},y)=0,\;t<f^s(x^{\prime},y)<M_f(x^{\prime})\}\right)
\end{eqnarray}
for every $t\in (0,M_f(x^{\prime}))$.
\end{lem}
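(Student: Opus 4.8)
The plan is to reduce the assertion to a one-dimensional statement about the symmetric decreasing rearrangement on a line and then to settle it with the coarea formula. For $\mathcal{L}^{n-1}$-a.e. $x^{\prime}\in\pi_{n-1}(\Omega)$, the characterization of Sobolev functions by absolute continuity on lines gives that the slice $g:=f(x^{\prime},\cdot)$, extended by $0$ outside $\Omega_{x^{\prime}}$, lies in $W^{1,1}(\mathbb{R})$, has weak derivative equal $\mathcal{L}^1$-a.e. to $\nabla_yf(x^{\prime},\cdot)$, and satisfies $M_f(x^{\prime})={\rm ess}\sup g$; by Lemma \ref{3d} the same applies to $g^{\star}:=f^s(x^{\prime},\cdot)$, which, by the definition of the Steiner rearrangement, is precisely the symmetric decreasing rearrangement of $g$ on $\mathbb{R}_y$. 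In particular $g$ and $g^{\star}$ share the distribution function $\mu:=\mu_f(x^{\prime},\cdot)$ and the value $M:=M_f(x^{\prime})$, so it suffices to show that, for nonnegative $g\in W^{1,1}(\mathbb{R})$ and every $t\in(0,M)$, the number $\mathcal{L}^1(\{g^{\prime}=0\}\cap\{t<g<M\})$ is determined by $\mu$ alone.

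To this end, I would introduce the finite nonnegative Radon measure $\nu$ on $(0,M)$ defined by $\nu(A)=\mathcal{L}^1(\{x\in\mathbb{R}:g(x)\in A\})$. Since $\nu((\tau,M))=\mu(\tau)-\mathcal{L}^1(\{g\geq M\})$ for $\tau\in(0,M)$, the measure $\nu$ depends only on $\mu$. Write $\nu=\nu_{ac}+\nu_s$ for its decomposition into absolutely continuous and singular parts with respect to $\mathcal{L}^1$. The key claim is
\begin{equation*}
\mathcal{L}^1\big(\{g^{\prime}\neq0\}\cap\{t<g<M\}\big)=\nu_{ac}((t,M))\qquad\text{and}\qquad\mathcal{L}^1\big(\{g^{\prime}=0\}\cap\{t<g<M\}\big)=\nu_s((t,M)).
\end{equation*}
The second identity depends only on $\mu$ and hence proves the lemma; the two are consistent because their sum equals $\mathcal{L}^1(\{t<g<M\})=\nu((t,M))$.

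For the proof of the claim, I would apply the coarea formula (\ref{2p}) in dimension one with the Borel weight $|g^{\prime}|^{-1}\chi_{\{g^{\prime}\neq0\}}\chi_{\{\tau_1<g<\tau_2\}}$, obtaining $\mathcal{L}^1(\{g^{\prime}\neq0\}\cap\{\tau_1<g<\tau_2\})=\int_{\tau_1}^{\tau_2}\sigma(\tau)\,d\tau$, where $\sigma(\tau):=\sum_{y:\,g(y)=\tau,\,g^{\prime}(y)\neq0}|g^{\prime}(y)|^{-1}$. The measure $\sigma\,d\mathcal{L}^1$ is absolutely continuous and, by the trivial estimate $\int_{\tau_1}^{\tau_2}\sigma\leq\mathcal{L}^1(\{\tau_1<g<\tau_2\})=\nu((\tau_1,\tau_2))$, satisfies $\sigma\,d\mathcal{L}^1\leq\nu$ on all intervals, whence $\sigma\,d\mathcal{L}^1\leq\nu_{ac}$. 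On the other hand, applying (\ref{2p}) with the weight $\chi_{\{g^{\prime}=0\}\cap\{t<g<M\}}$ — whose left-hand side vanishes because $|g^{\prime}|=0$ on $\{g^{\prime}=0\}$ — forces $g^{-1}(\tau)\cap\{g^{\prime}=0\}=\emptyset$ for every $\tau$ outside an $\mathcal{L}^1$-null set $\mathcal{N}\subset(t,M)$, so that $\{g^{\prime}=0\}\cap\{t<g<M\}\subset g^{-1}(\mathcal{N})$ and therefore $\mathcal{L}^1(\{g^{\prime}=0\}\cap\{t<g<M\})\leq\nu(\mathcal{N})=\nu_s(\mathcal{N})\leq\nu_s((t,M))$. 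Combining this with the identity $\mathcal{L}^1(\{g^{\prime}=0\}\cap\{t<g<M\})=\nu((t,M))-\int_t^M\sigma\geq\nu((t,M))-\nu_{ac}((t,M))=\nu_s((t,M))$ forces equality throughout, which proves the claim and the lemma.

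I expect the main obstacle to be exactly this last squeeze: one must pin down that $\{g^{\prime}=0\}$ carries \emph{exactly} the singular part of $\nu$ — not merely its atomic part, since an absolutely continuous $g$ may well produce a singular-continuous $\mu$ — and this is what the two-sided estimate above is for. Apart from that, the only care required is in handling precise representatives when invoking the coarea formula on the slices, and in checking that the exceptional set of those $x^{\prime}$ for which the slicing theorems fail is $\mathcal{L}^{n-1}$-null, so that the conclusion holds for every $t\in(0,M_f(x^{\prime}))$ simultaneously; the reduction to dimension one and the equimeasurability of $g$ and $g^{\star}$ are routine given the results already available.
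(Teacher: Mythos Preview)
The paper does not prove this lemma; it is quoted from \cite[Proposition~2.3]{CF06} and used as a black box, so there is no in-paper argument to compare against. Your proposal is therefore an independent proof of the cited result, and it is essentially correct.

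Your route --- slice to one dimension, then show that $\mathcal{L}^1(\{g'=0\}\cap\{t<g<M\})$ equals $\nu_s((t,M))$, the singular part of the pushforward $\nu=g_\#\mathcal{L}^1$ restricted to $(0,M)$ --- is the standard mechanism behind such statements. The two-sided squeeze closes cleanly: from $\sigma\,d\mathcal{L}^1\le\nu$ with $\sigma\,d\mathcal{L}^1$ absolutely continuous you indeed get $\sigma\,d\mathcal{L}^1\le\nu_{ac}$ (restrict to a Borel set carrying $\nu_s$ to see this), and the Sard-type inclusion $\{g'=0\}\cap\{t<g<M\}\subset g^{-1}(\mathcal{N})$ gives the other inequality; summing forces both to be equalities. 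Two small cosmetic points: you should work throughout with the absolutely continuous representative of the slice so that the classical 1D area/coarea formula and the Luzin~$N$ property are genuinely available, and the image set $\mathcal{N}=g(\{g'=0\}\cap\{t<g<M\})$ need not be Borel, so enlarge it to a Borel null superset before evaluating $\nu$ on it. Neither affects the logic.
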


\begin{lem}\cite[Lemma 4.1]{CF06}\label{4e}
Let $\Omega$ be a bounded open subset of $\mathbb{R}^n$, and let $f$ be a nonnegative function from $f\in W_0^{1,1}(\Omega)$. Then $\mu_f\in BV(\pi_{n-1}(\Omega)\times\mathbb{R}_t^+)$, and, for $\mathcal{L}^{n-1}$-a.e. $x^{\prime}\in \pi_{n-1}(\mathcal{S}_f)^+$,
\begin{eqnarray}
\nabla_t\mu_f(x^{\prime},t)=-\int_{\partial^{\ast}\{y:f(x^{\prime},y)>t\}}\frac{1}{|\nabla_yf|}d\mathcal{H}^0,
\end{eqnarray}
\begin{eqnarray}
\nabla_i\mu_f(x^{\prime},t)=\int_{\partial^{\ast}\{y:f(x^{\prime},y)>t\}}\frac{\nabla_i f}{|\nabla_yf|}d\mathcal{H}^{0}\;\;\;i=1,\dots,n-1,
\end{eqnarray}
for $\mathcal{L}^1$-a.e. $t\in(0,M_f(x^{\prime}))$.
\end{lem}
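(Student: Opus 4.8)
Here is the plan. I would prove the statement by identifying $\mu_f$ with the fiber--length function of the subgraph $\mathcal S_f$ along the $y$-axis, transporting the finite-perimeter structure of $\mathcal S_f$ (supplied by Theorems~\ref{2s} and \ref{T1}) to $\mu_f$ via a Fubini-type testing argument, and then computing $D\mu_f$ by disintegrating surface integrals over $(x',t)$-fibers with the coarea formula for rectifiable sets. First, since $f\in W^{1,1}_0(\Omega)$ the extension of $f$ by $0$ (still written $f$) lies in $W^{1,1}(\mathbb R^n)$, so $D^sf=0$; by Theorem~\ref{2s}, $\mathcal S^-_f$ has finite perimeter with $P(\mathcal S^-_f;\mathbb R^n\times\mathbb R_t)=\int_\Omega\sqrt{1+|\nabla f|^2}\,dx<\infty$, and because $f$ is supported in the bounded set $\Omega$, the set $\mathcal S_f=\mathcal S^-_f\cap\{t>0\}$ is bounded in the $y$-direction and has finite perimeter in $\pi_{n-1}(\Omega)\times\mathbb R_y\times\mathbb R^+_t$, with the same perimeter bound. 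By Theorem~\ref{T1}, at $\mathcal H^n$-a.e.\ $(x',y,t)\in\partial^*\mathcal S_f$,
$$\nu^{\mathcal S_f}(x',y,t)=\frac{(\nabla_{x'}f(x',y),\ \nabla_yf(x',y),\ -1)}{\sqrt{1+|\nabla f(x',y)|^2}}.$$
Moreover $\mu_f\in L^1(\pi_{n-1}(\Omega)\times\mathbb R^+_t)$, since $\int\mu_f\,dx'dt=\int_0^\infty\mathcal L^n(\{f>t\})\,dt=\int_\Omega f\,dx<\infty$.

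Second, I would test $\mu_f$ against derivatives. Writing $\mu_f(x',t)=\int_{\mathbb R}\chi_{\mathcal S_f}(x',y,t)\,dy$ and lifting $\phi\in C^1_c(\pi_{n-1}(\Omega)\times\mathbb R^+_t)$ to the $y$-independent function $\tilde\phi(x',y,t):=\phi(x',t)$ (legitimate by the boundedness of $\mathcal S_f$ in $y$), Fubini together with the Gauss--Green formula for the set of finite perimeter $\mathcal S_f$ gives, for $i=1,\dots,n-1$,
$$\int\mu_f\,\partial_{x_i}\phi\,dx'dt=\int_{\mathcal S_f}\partial_{x_i}\tilde\phi=-\int_{\partial^*\mathcal S_f}\phi(x',t)\,\nu^{\mathcal S_f}_{x_i}\,d\mathcal H^n,$$
and the analogous identity with $\partial_t$ and $\nu^{\mathcal S_f}_t$ in place of $\partial_{x_i}$ and $\nu^{\mathcal S_f}_{x_i}$. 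Testing against vector fields $\Phi\in C^1_c$ with $|\Phi|\le1$ and applying these identities componentwise bounds the total variation $|D\mu_f|$ by $P(\mathcal S_f;\pi_{n-1}(\Omega)\times\mathbb R_y\times\mathbb R^+_t)<\infty$; hence $\mu_f\in BV(\pi_{n-1}(\Omega)\times\mathbb R^+_t)$, which is the first assertion.

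Third, I would disintegrate the surface integrals over the $(x',t)$-fibers. On the part $\{\nu^{\mathcal S_f}_y\neq0\}$ of $\partial^*\mathcal S_f$, the coarea formula for rectifiable sets quoted in Section~\ref{s3}, applied with $g=\phi\,\nu^{\mathcal S_f}_{x_i}/|\nu^{\mathcal S_f}_y|$ (split into positive and negative parts), yields
$$\int_{\partial^*\mathcal S_f\cap\{\nu_y\neq0\}}\phi\,\nu^{\mathcal S_f}_{x_i}\,d\mathcal H^n=\int dx'dt\int_{(\partial^*\mathcal S_f)_{(x',t)}}\phi\,\frac{\nu^{\mathcal S_f}_{x_i}}{|\nu^{\mathcal S_f}_y|}\,d\mathcal H^0.$$
By Vol'pert's Theorem~\ref{2t} applied to $\mathcal S_f$ sliced along $y$, for $\mathcal L^n$-a.e.\ $(x',t)$ the fiber $(\partial^*\mathcal S_f)_{(x',t)}$ equals $\partial^*\{y:f(x',y)>t\}$, a finite set on which $\nu^{\mathcal S_f}_y\neq0$ and $f(x',y)=t$; together with the normal formula above (valid $\mathcal H^n$-a.e., hence on a.e.\ fiber) this turns the fiber integrand into $\phi(x',t)\,\nabla_{x_i}f(x',y)/|\nabla_yf(x',y)|$. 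On the complementary part $\{\nu^{\mathcal S_f}_y=0\}=\{x\in\Omega:\nabla_yf(x)=0\}$, parametrising $\partial^*\mathcal S_f$ by the graph of $f$ (Theorems~\ref{2s}, \ref{T1}) identifies the leftover integral with $\int_{\{\nabla_yf=0\}}\phi(x',f(x))\,\nabla_{x_i}f(x)\,dx$, whose image under $x\mapsto(x',f(x))$ (a map whose Jacobian $|\nabla_yf|$ vanishes there) is $\mathcal L^n$-null; this is therefore exactly the singular part of $D_{x_i}\mu_f$ and contributes nothing to the $\mathcal L^n$-density. Comparing with the testing identity, and using $\nu^{\mathcal S_f}_{x_i}/|\nu^{\mathcal S_f}_y|=\nabla_{x_i}f/|\nabla_yf|$ and $\nu^{\mathcal S_f}_t/|\nu^{\mathcal S_f}_y|=-1/|\nabla_yf|$, the approximate gradient of $\mu_f$ has the asserted densities; Fubini then gives the stated formulas for $\mathcal L^{n-1}$-a.e.\ $x'$ and $\mathcal L^1$-a.e.\ $t\in(0,M_f(x'))$.

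I expect the main obstacle to be this last disintegration. The set $\partial^*\mathcal S_f$ may genuinely carry a flat part $\{\nabla_yf=0\}$ of positive $\mathcal H^n$-measure, so that $\mu_f$ need not lie in $W^{1,1}$, and it is precisely this part that produces the singular component of $D\mu_f$; it must be separated off before the coarea disintegration can be matched with the claimed pointwise formulas. Establishing that the fibers $(\partial^*\mathcal S_f)_{(x',t)}$ avoid $\{\nabla_yf=0\}$ for $\mathcal L^n$-a.e.\ $(x',t)$ (Vol'pert's theorem in the $y$-direction), together with the vanishing-Jacobian argument placing the flat part in the singular part of $D\mu_f$, is the delicate point; the rest is Gauss--Green, Fubini, and Cavalieri's principle.
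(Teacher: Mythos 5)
This lemma is not proved in the paper at all: it is quoted as \cite[Lemma 4.1]{CF06}, so there is no internal argument to compare against. Your sketch correctly reconstructs what is essentially the Cianchi--Fusco argument (building on \cite{CCF05}): identify $\mu_f$ with the $y$-section length of the finite-perimeter subgraph, obtain the $BV$ property from Gauss--Green plus Fubini, and get the derivative densities by slicing (Theorem \ref{2t}) together with the coarea formula, with the contribution of $\{v_y^{\mathcal S_f}=0\}$ relegated to the singular part of $D\mu_f$. One small simplification: the singularity of that last term does not require your graph-parametrization and vanishing-Jacobian argument, since Theorem \ref{2t} already gives that for $\mathcal L^n$-a.e. $(x^{\prime},t)$ the fiber of $\partial^{\ast}\mathcal S_f$ contains no point with $v_y^{\mathcal S_f}=0$, so the $(x^{\prime},t)$-projection of $\partial^{\ast}\mathcal S_f\cap\{v_y^{\mathcal S_f}=0\}$ is $\mathcal L^n$-null and the pushed-forward measure is automatically singular.
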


\begin{cor}
Let $\Omega$ be a bounded open subset of $\mathbb{R}^n$, and let $f$ be a nonnegative function from $f\in W_0^{1,1}(\Omega)$. Then for $\mathcal{L}^{n-1}$-a.e. $x^{\prime}\in\pi_{n-1}(\mathcal{S}_f)^+$ and $\mathcal{L}^1$-a.e. $t\in (0, M_f(x^{\prime}))$,
\begin{eqnarray}\label{3h}
\nabla_t\mu_f(x^{\prime},t)=-\frac{2}{|\nabla_yf^s(x^{\prime},y_1)|}\;\;and\;\;
\nabla_i\mu_f(x^{\prime},t)=2\frac{\nabla_if^s(x^{\prime},y_1)}{|\nabla_yf^s(x^{\prime},y_1)|},
\end{eqnarray}
where $y_1\in\partial^{\ast}\{y:f^s(x^{\prime},y)>t\}$.
\end{cor}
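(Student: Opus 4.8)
The plan is to apply Lemma \ref{4e} to the Steiner rearrangement $f^s$ in place of $f$, and then exploit the symmetry of $f^s(x^{\prime},\cdot)$ in the $y$-variable. Since $f\in W_0^{1,1}(\Omega)$, Lemma \ref{3d} guarantees that $f^s\in W_0^{1,1}(\Omega^s)$, so Lemma \ref{4e} is applicable to $f^s$. Moreover, the Steiner rearrangement leaves the distribution function in the $y$-variable unchanged, so $\mu_{f^s}=\mu_f$; in particular $M_{f^s}\equiv M_f$ on $\pi_{n-1}(\Omega)$, and, since Steiner symmetrization about $\{y=0\}$ does not affect the projection onto $\mathbb{R}^{n-1}$, the set $\pi_{n-1}(\mathcal{S}_{f^s})^+$ coincides with $\pi_{n-1}(\mathcal{S}_f)^+$ up to an $\mathcal{L}^{n-1}$-null set (recall that $(\mathcal{S}_f)^s$ is equivalent to $\mathcal{S}_{f^s}$ by \eqref{2m}).

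Carrying this out, Lemma \ref{4e} applied to $f^s$ gives, for $\mathcal{L}^{n-1}$-a.e. $x^{\prime}\in\pi_{n-1}(\mathcal{S}_f)^+$ and $\mathcal{L}^1$-a.e. $t\in(0,M_f(x^{\prime}))$,
\begin{align*}
\nabla_t\mu_f(x^{\prime},t)&=-\int_{\partial^{\ast}\{y:f^s(x^{\prime},y)>t\}}\frac{1}{|\nabla_yf^s|}\,d\mathcal{H}^0, \\
\nabla_i\mu_f(x^{\prime},t)&=\int_{\partial^{\ast}\{y:f^s(x^{\prime},y)>t\}}\frac{\nabla_if^s}{|\nabla_yf^s|}\,d\mathcal{H}^0, \qquad i=1,\dots,n-1.
\end{align*}
Next I would note that, by the definition of $f^s$, for such $x^{\prime}$ and $t$ the super-level set $\{y:f^s(x^{\prime},y)>t\}$ agrees, up to an $\mathcal{L}^1$-null set, with the open interval $(-\mu_f(x^{\prime},t)/2,\mu_f(x^{\prime},t)/2)$; since $t<M_f(x^{\prime})$ forces $\mu_f(x^{\prime},t)>0$, its reduced boundary is precisely the two-point set $\{y_1,-y_1\}$ with $y_1=\mu_f(x^{\prime},t)/2$, so each of the two integrals above collapses to the sum of the corresponding terms at $y_1$ and $-y_1$.

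Finally, I would compare the approximate gradient of $f^s$ at $y_1$ and at $-y_1$: the identity $f^s(x^{\prime},y)=f^s(x^{\prime},-y)$ yields, at every point of approximate differentiability, $|\nabla_yf^s(x^{\prime},y_1)|=|\nabla_yf^s(x^{\prime},-y_1)|$ and $\nabla_if^s(x^{\prime},y_1)=\nabla_if^s(x^{\prime},-y_1)$ for $i=1,\dots,n-1$. Substituting these identities into the two-point sums produces exactly \eqref{3h}, independently of which of the two boundary points is labelled $y_1$. The only point requiring care is the bookkeeping of exceptional null sets — those from Lemma \ref{4e} applied to $f^s$, from the identification $\mu_{f^s}=\mu_f$, and from the (zero $\mathcal{L}^n$-measure) set where $f^s$ fails to be approximately differentiable, whose fibres are $\mathcal{L}^1$-null for $\mathcal{L}^{n-1}$-a.e. $x^{\prime}$ by Fubini — so that all the assertions hold simultaneously for the same $x^{\prime}$ and $t$; within the Cianchi--Fusco framework this is routine, and it is the only (minor) obstacle.
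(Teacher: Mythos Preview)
Your proposal is correct and follows essentially the same approach as the paper: invoke Lemma~\ref{3d} to ensure $f^s\in W_0^{1,1}(\Omega^s)$, use \eqref{2m} and $\mu_{f^s}=\mu_f$ to identify the relevant projections and distribution functions, and then apply Lemma~\ref{4e} to $f^s$. The paper's proof is terser and leaves the two-point symmetry computation implicit, whereas you spell out explicitly why the $\mathcal{H}^0$-integrals over $\partial^{\ast}\{y:f^s(x',y)>t\}=\{y_1,-y_1\}$ collapse to the factor of $2$; this is a welcome clarification rather than a different argument.
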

\begin{proof}
By Lemma \ref{3d}, the function $f^s\in W_0^{1,1}(\Omega^s)$. Moreover, by (\ref{2m}), $\pi_{n-1}(\mathcal{S}_f)^+$ is equivalent to $\pi_{n-1}(\mathcal{S}_{f^s})^+$. Since $\mu_{f^s}=\mu_f$, an application of Lemma \ref{4e} to $f^s$ yields (\ref{3h}).
\end{proof}

\begin{lem}\label{L7}
Let $\Omega$ be a bounded open subset of $\mathbb{R}^n$. If $f$ is a nonnegative function from $W_0^{1,\Phi}(\Omega)$ and
\begin{eqnarray}
n(x^{\prime},t):=\mathcal{H}^0\left(\partial^{\ast}\{y\in\Omega_{x^{\prime}}:\;f(x^{\prime},y)>t\}\right),
\end{eqnarray}
then for $\mathcal{L}^{n-1}$-a.e. $x^{\prime}\in\pi_{n-1}(\Omega)$ such that $M_f(x^{\prime})>0$, $n(x^{\prime},t)$ is an even number for $\mathcal{L}^1$-a.e. $t\in (0,M_f(x^{\prime}))$.
\end{lem}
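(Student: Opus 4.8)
The plan is to reduce to one–dimensional restrictions of $f$, then combine the one–dimensional coarea formula with the elementary structure of finite–perimeter subsets of $\mathbb{R}$. First, by Lemma \ref{6g} we may assume $f\in W_0^{1,1}(\Omega)$; let $\bar f$ denote its extension by $0$ to $\mathbb{R}^n$, so that $\bar f\in W^{1,1}(\mathbb{R}^n)$ has bounded support. By the Fubini (ACL) characterization of Sobolev functions, for $\mathcal{L}^{n-1}$-a.e.\ $x^{\prime}\in\pi_{n-1}(\Omega)$ the section $g:=\bar f(x^{\prime},\cdot)$ lies in $W^{1,1}(\mathbb{R})$, hence admits a globally absolutely continuous representative; moreover $g\geq 0$, $g$ has bounded support, $g\equiv 0$ off $\Omega_{x^{\prime}}$, and $\int_{\mathbb{R}}|g^{\prime}|\,dy=\int_{\mathbb{R}}|\nabla_y\bar f(x^{\prime},y)|\,dy<\infty$. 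Fix such a good $x^{\prime}$ with $M_f(x^{\prime})>0$. For $t\in(0,M_f(x^{\prime}))$ the set $E_t:=\{y\in\Omega_{x^{\prime}}:f(x^{\prime},y)>t\}=\{y\in\mathbb{R}:g(y)>t\}$ is open, bounded, and—because $g$ is continuous, vanishes off $\Omega_{x^{\prime}}$, and is $>t>0$ on $E_t$—its closure is compact and contained in $\Omega_{x^{\prime}}$. Hence the reduced boundary of $E_t$ relative to $\Omega_{x^{\prime}}$ coincides with that in $\mathbb{R}$, and $n(x^{\prime},t)=\mathcal{H}^0(\partial^{\ast}E_t)$.

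Next I would establish finiteness of $n(x^{\prime},t)$ for a.e.\ $t$. Applying the coarea formula \eqref{2p} to $g\in W^{1,1}(\Omega_{x^{\prime}})$ with Borel weight $\equiv 1$ gives $\int_{\mathbb{R}}|g^{\prime}(y)|\,dy=\int_0^{+\infty}\mathcal{H}^0\big(\partial^{\ast}\{g>t\}\big)\,dt=\int_0^{+\infty}n(x^{\prime},t)\,dt$. Since $|\nabla f|\in L^1(\Omega)$, Fubini's theorem shows $\int_{\mathbb{R}}|\nabla_y\bar f(x^{\prime},y)|\,dy<\infty$ for $\mathcal{L}^{n-1}$-a.e.\ $x^{\prime}$; combined with the previous identity, for a.e.\ such $x^{\prime}$ we obtain $n(x^{\prime},t)<\infty$ for $\mathcal{L}^1$-a.e.\ $t>0$, in particular for $\mathcal{L}^1$-a.e.\ $t\in(0,M_f(x^{\prime}))$.

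It remains to prove evenness. Fix a good $x^{\prime}$ with $M_f(x^{\prime})>0$ and a good $t\in(0,M_f(x^{\prime}))$ with $n(x^{\prime},t)<\infty$. The open bounded set $E_t$ is a disjoint union of open intervals (its connected components); after merging any two components sharing an endpoint (at such a point $E_t$ has density $1$, so it is excluded from $\partial^{\ast}E_t$) we may write $E_t$, up to a finite set, as $\bigsqcup_{i=1}^{k}(a_i,b_i)$ with $b_i<a_{i+1}$. Were there infinitely many such blocks, their (isolated) endpoints would all lie in $\partial^{\ast}E_t$, contradicting $n(x^{\prime},t)<\infty$; thus $k<\infty$. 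Each $a_i$ and each $b_i$ is a point where $E_t$ has density $1/2$, so $\partial^{\ast}E_t=\{a_1,b_1,\dots,a_k,b_k\}$ and $n(x^{\prime},t)=2k$ is even. (Alternatively one may invoke \eqref{2v}: along $\mathbb{R}_y$, $\chi_{E_t}^{\ast}$ equals $0$ near $\pm\infty$ and at each point of $\partial^{\ast}E_t$ jumps either from $0$ to $1$ or from $1$ to $0$; since these jumps must balance, their total number is even.)

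The main obstacle is the bookkeeping that upgrades "$\partial^{\ast}E_t$ is finite" to the block decomposition of $E_t$—notably excluding components of $E_t$ that accumulate at a boundary point—and the verification that the reduced boundary computed relative to $\Omega_{x^{\prime}}$ agrees with the one computed in $\mathbb{R}$, which is exactly where the compact containment of the level sets enters. The coarea and Fubini steps are routine.
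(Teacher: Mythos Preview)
Your proof is correct and follows essentially the same route as the paper: reduce to $W_0^{1,1}$ via Lemma~\ref{6g}, use the ACL characterization to get absolutely continuous one-dimensional sections, note that $E_t=\{g>t\}$ is open and compactly contained in $\Omega_{x'}$, establish that $n(x',t)<\infty$ for a.e.\ $t$, and deduce evenness from the interval structure. The only real difference is in the finiteness step: the paper argues via Theorem~\ref{2s} (the subgraph $\mathcal{S}_f^-$ has finite perimeter in $\Omega\times\mathbb{R}_t$) together with Vol'pert's Theorem~\ref{2t} on its $(x',t)$-sections, whereas you use the one-dimensional coarea formula directly---a slightly more elementary path to the same conclusion.

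One caution on your primary evenness argument: merging components that \emph{share an endpoint} does not guarantee that the remaining ``blocks'' are separated by sets of positive measure. If, say, $E_t$ were an interval minus a Cantor set of measure zero, no two components share an endpoint, yet every component endpoint has density $1$ in $E_t$ and hence does \emph{not} lie in $\partial^{\ast}E_t$; so the implication ``infinitely many blocks $\Rightarrow$ infinitely many reduced-boundary points'' fails. Your parenthetical alternative is the clean fix: once $\mathcal{H}^0(\partial^{\ast}E_t)<\infty$ and $E_t$ is bounded, the precise representative $\chi_{E_t}^{\ast}$ vanishes near $\pm\infty$ and, by the one-dimensional content of \eqref{2v}, flips between $0$ and $1$ at each point of $\partial^{\ast}E_t$, so the number of such points is even. (Equivalently, invoke the structure theorem for one-dimensional sets of finite perimeter.) The paper's own evenness step is similarly terse and rests on the same underlying fact.
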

\begin{proof}
For $f\in W_0^{1,\Phi}(\Omega)$, by Lemma \ref{6g}, $f\in W_0^{1,1}(\Omega)$. By \cite[Theorem 2.1.4]{Ziemer89}, $f$
has a representative  $\bar{f}$ that is absolutely continuous on almost all line segments in $\Omega$ parallel
to the coordinate axes. Since $\bar{f}$ is absolutely continuous, $\{y\in\Omega_{x^{\prime}}:\;\bar{f}(x^{\prime},y)>t\}$ is an open set. Moreover, since $f$ vanishes on the boundary of $\Omega$,
\begin{eqnarray}\label{6e}
\{y\in\Omega_{x^{\prime}}:\;\bar{f}(x^{\prime},y)>t\}=\bigcup_{k=1}^{\infty}(a_k,b_k)
\end{eqnarray}
for $\mathcal{L}^{n-1}$-a.e. $x^{\prime}\in\pi_{n-1}(\Omega)$ such that $M_f(x^{\prime})>0$ and $\mathcal{L}^1$-a.e. $t\in (0,M_f(x^{\prime}))$, where $a_k$ and $b_k$ are the points on the reduced boundary of $\{y\in\Omega_{x^{\prime}}:\;\bar{f}(x^{\prime},y)>t\}$. For the same $x^{\prime}$ and $t$, since
$\{y\in\Omega_{x^{\prime}}:\;\bar{f}(x^{\prime},y)>t\}$  is equivalent to $\{y\in\Omega_{x^{\prime}}:\; f(x^{\prime},y)>t\}$,
\begin{eqnarray}\label{6f}
\partial^{\ast}\{y\in\Omega_{x^{\prime}}:\;f(x^{\prime},y)>t\}=
\partial^{\ast}\{y\in\Omega_{x^{\prime}}:\;\bar{f}(x^{\prime},y)>t\}.
\end{eqnarray}
Moreover, since $\Omega$ be a bounded open subset of $\mathbb{R}^n$ and $f\in W^{1,1}(\Omega)$, by Theorem \ref{2s}, $\mathcal{S}_f^-$ is a set of finite perimeter in $\Omega\times\mathbb{R}_t$.
  Since $$\left(\mathcal{S}^-_f\right)_{x^{\prime},t}\;{\rm is\;equivalent\;to}\;\{y\in\Omega_{x^{\prime}}:\;f(x^{\prime},y)>t\}$$ for $\mathcal{L}^{n-1}$-a.e. $x^{\prime}\in\pi_{n-1}(\Omega)$ such that $M_f(x^{\prime})>0$ and $\mathcal{L}^1$-a.e. $t\in (0,M_f(x^{\prime}))$ and by (\ref{2u}) in Theorem \ref{2t}, $n(x^{\prime},t)$ is finite. By (\ref{6e}) and (\ref{6f}), $n(x^{\prime},t)$ is an even number.
\end{proof}

\begin{pro}\label{4f}
Let $\Omega$ be a bounded open subset of $\mathbb{R}^n$. If $f$ is a nonnegative function from $W_0^{1,\Phi}(\Omega)$, then
\begin{eqnarray}\label{3e}
(B_{\phi}(f))^s\subset B_{\phi}(f^s).
\end{eqnarray}
\end{pro}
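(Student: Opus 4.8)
The plan is to use Lemma \ref{2f} as the criterion for the inclusion $(B_\phi(f))^s\subset B_\phi(f^s)$: it suffices to show that whenever $(x_0',\eta_1),(x_0',-\eta_2)\in\partial B_\phi(f)$ with $\eta_1\neq-\eta_2$, i.e. $\|(x_0',\eta_1)\|_{f,\phi}=\|(x_0',-\eta_2)\|_{f,\phi}=1$, we have $\|(x_0',\eta_1/2+\eta_2/2)\|_{f^s,\phi}\leq 1$. Here I think of $\mathbb R^n=\mathbb R^{n-1}\times\mathbb R_y$ with the Steiner symmetrization performed in the $e_n$-direction, consistent with the definition of $f^s$. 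By Lemma \ref{2k}, the two boundary conditions are equivalent to
\begin{eqnarray*}
\frac1{|\Omega|}\int_\Omega\phi\big((x_0',\eta_1)\cdot\nabla f(x)\big)\,dx=1=\frac1{|\Omega|}\int_\Omega\phi\big((x_0',-\eta_2)\cdot\nabla f(x)\big)\,dx,
\end{eqnarray*}
and the desired conclusion will follow, again via Lemma \ref{2k} and the monotonicity of $\lambda\mapsto\frac1{|\Omega|}\int\phi(z\cdot\nabla f^s/\lambda)$, once I show
\begin{eqnarray*}
\frac1{|\Omega^s|}\int_{\Omega^s}\phi\Big((x_0',\tfrac{\eta_1+\eta_2}2)\cdot\nabla f^s(x)\Big)\,dx\leq 1.
\end{eqnarray*}
Note $|\Omega^s|=|\Omega|$ since Steiner symmetrization preserves Lebesgue measure.

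The heart of the argument is a pointwise/slice-wise comparison realized through the coarea formula and Fubini's theorem. By Lemma \ref{6g} we have $f\in W_0^{1,1}(\Omega)$, and by Lemma \ref{3d}, $f^s\in W_0^{1,1}(\Omega^s)$. First I would rewrite each of the three integrals $\int\phi(z\cdot\nabla g)\,dx$ (for $g=f$ and $g=f^s$, and the relevant vectors $z$) using the coarea formula \eqref{2p}: split $\Omega$ into the set where $\nabla_y g=0$ and its complement; on the latter, $\int_{\{\nabla_y g\neq 0\}}\phi(z\cdot\nabla g)\,dx=\int_0^\infty dt\int_{\partial^*\{g>t\}}\phi(z\cdot\nabla g)\,\frac{1}{|\nabla g|}\,d\mathcal H^{n-1}$, and by the Vol'pert-type coarea formula over rectifiable sets together with Theorem \ref{2t} this can be further reduced to an iterated integral over $x'\in\pi_{n-1}(\Omega)$ and the $\mathcal H^0$-sum over the points $y$ on the one-dimensional reduced boundary $\partial^*\{y:g(x',y)>t\}$, weighted by $1/|\nabla_y g|$. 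The contribution of $\{\nabla_y g=0\}$ needs to be controlled separately, and here Lemma \ref{3f} (the equimeasurability of the critical sets of $f$ and $f^s$ at each level) and Lemma \ref{L7} (finiteness and evenness of $n(x',t)$) are exactly what is needed.

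For a fixed $x'$ and a fixed level $t\in(0,M_f(x'))$, the Corollary following Lemma \ref{4e} gives, for the single point $y_1\in\partial^*\{y:f^s(x',y)>t\}$,
\begin{eqnarray*}
\nabla_t\mu_f(x',t)=-\frac{2}{|\nabla_y f^s(x',y_1)|},\qquad \nabla_i\mu_f(x',t)=2\,\frac{\nabla_i f^s(x',y_1)}{|\nabla_y f^s(x',y_1)|},
\end{eqnarray*}
while Lemma \ref{4e} applied to $f$ itself expresses the same derivatives of $\mu_f$ as sums over the $n(x',t)=:2m$ points $y^{(1)},\dots,y^{(2m)}\in\partial^*\{y:f(x',y)>t\}$. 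Thus, after writing $z=(x_0',\eta)$ with $\eta\in\{\eta_1,-\eta_2\}$ and $\eta^s=\tfrac{\eta_1+\eta_2}2$, the quantity $\phi\big(z\cdot\nabla f^s(x',y_1)\big)/|\nabla_y f^s(x',y_1)|$ becomes $\tfrac12\phi\big(x_0'\cdot\tfrac{\nabla_{x'}\mu_f(x',t)}{|\nabla_t\mu_f(x',t)|}+\eta\cdot\tfrac{1}{|\nabla_t\mu_f/2|}\big)\cdot\tfrac{|\nabla_t\mu_f|}{2}$ — i.e. everything is expressed through the measure-theoretic derivatives of the single function $\mu_f$, which does not change under symmetrization. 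The plan is then to invoke the convexity of $\phi$ (Jensen's inequality) across the $2m$ boundary points of the unsymmetrized slice: since $\nabla_t\mu_f=-\sum_k 1/|\nabla_y f(x',y^{(k)})|$ and $\nabla_i\mu_f=\sum_k \nabla_i f/|\nabla_y f|$, the symmetrized integrand at level $t$ is a convex combination (with weights $\tfrac{1/|\nabla_y f(x',y^{(k)})|}{\sum_j 1/|\nabla_y f(x',y^{(j)})|}$) — but one must be careful that the correct averaging is of $\phi((x_0',+\eta_1)\cdot\nabla f)$ against $\phi((x_0',-\eta_2)\cdot\nabla f)$, exploiting that $f^s$ has exactly one positive-slope and one negative-slope boundary point at each level whereas $f$ has an even number split into "up-crossings" and "down-crossings" by \eqref{2v}. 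Lemma \ref{2q} (monotonicity of $t\mapsto\phi(at-b)+\phi(-at-b)$) is the one-variable incarnation of this comparison and will be used to absorb the cross terms.

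The main obstacle I anticipate is precisely this bookkeeping in the slice-wise convexity step: matching the single pair of boundary points of $f^s$ against the $2m$ boundary points of $f$, correctly pairing up-crossings with down-crossings, and verifying that the convex combination of the values $(x_0',\pm\eta_j)\cdot\nabla f$ at those points maps under $\phi$ to something $\le\tfrac12[\phi((x_0',\eta_1)\cdot\nabla f)+\phi((x_0',-\eta_2)\cdot\nabla f)]$-weighted-averaged — and then integrating back over $t\in(0,M_f(x'))$ and over $x'\in\pi_{n-1}(\Omega)$ via Fubini to recover the normalized integrals, handling the critical sets $\{\nabla_y f=0\}$ and $\{\nabla_y f^s=0\}$ via Lemma \ref{3f}. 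Once the slice inequality is in hand, averaging the two conditions for $\eta=\eta_1$ and $\eta=-\eta_2$ and integrating yields $\frac1{|\Omega|}\int_{\Omega^s}\phi((x_0',\tfrac{\eta_1+\eta_2}2)\cdot\nabla f^s)\,dx\le\tfrac12(1+1)=1$, which by Lemma \ref{2k} gives $\|(x_0',\tfrac{\eta_1+\eta_2}2)\|_{f^s,\phi}\le 1$, and Lemma \ref{2f} then delivers \eqref{3e}.
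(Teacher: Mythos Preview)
Your plan matches the paper's approach almost exactly: the reduction via Lemma~\ref{2f} and Lemma~\ref{2k}, the slice-wise rewriting through the coarea formula and Lemma~\ref{4e} (and its Corollary), the use of Lemma~\ref{L7} for the parity of $n(x',t)$, and the combination of Lemma~\ref{2q} with Jensen's inequality to obtain the pointwise comparison \eqref{3r} are precisely the ingredients of the paper's Step~1. The ``bookkeeping'' you flag as the main obstacle is carried out in \eqref{3o}--\eqref{3q} exactly along the lines you sketch.

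There is, however, a genuine gap in your handling of the critical set $\{\nabla_y f=0\}$. Lemma~\ref{3f} only tells you that, slice by slice, the one-dimensional measures of $\{\nabla_y f=0,\,t<f<M_f\}$ and $\{\nabla_y f^s=0,\,t<f^s<M_f\}$ coincide. It says nothing about $\nabla_{x'}f$ versus $\nabla_{x'}f^s$ on those sets, and this is what you would need: on $\{\nabla_y g=0\}$ the integrand reduces to $\phi(x_0'\cdot\nabla_{x'}g)$, and there is no reason the integral of this over the critical set of $f^s$ should be dominated by the corresponding integral for $f$ from measure equality alone. The paper does \emph{not} attempt to control this contribution directly. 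Instead it splits the argument into two steps: Step~1 proves the key integral inequality \eqref{3x} under the additional hypothesis \eqref{5t} that $\nabla_y f\neq 0$ a.e.\ on $\{0<f<M_f\}$ (so the critical set contributes nothing and your slice-wise argument goes through cleanly); Step~2 then removes \eqref{5t} by approximating a general $f\in W_0^{1,1}$ strongly in $W^{1,1}$ by Lipschitz functions $f_h$ satisfying \eqref{5t} (via \cite[Lemma~4.5]{CF06}), using the $W^{1,1}$-continuity of Steiner rearrangement \cite{Bu97} to pass $f_h^s\to f^s$, and Fatou's lemma on the left-hand side. This step in turn requires a further approximation of $\phi$: one first assumes the linear-growth bound \eqref{5u} (so that $\phi(z\cdot\nabla f_h)\to\phi(z\cdot\nabla f)$ in $L^1$), and then recovers a general convex $\phi$ by writing it as an increasing supremum of affine functions and invoking monotone convergence. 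Without this approximation layer your argument does not close.
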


\begin{proof}
Let $(x_0^{\prime},\eta_1),(x_0^{\prime},-\eta_2)\in \mathbb{R}^{n-1}\times \mathbb{R}_y$ and
$$\|(x_0^{\prime},\eta_1)\|_{f,\phi}=1\;\;{\rm and}\;\;\|(x_0^{\prime},-\eta_2)\|_{f,\phi}=1,$$
with $\eta_1\neq-\eta_2$.
By Lemma \ref{2k}, this means that
\begin{eqnarray}\label{3t}
\frac{1}{|\Omega|}\int_{\Omega}\phi\left((x_0^{\prime},\eta_1)\cdot \nabla f(x)\right)dx=1
\end{eqnarray}
and
\begin{eqnarray}\label{3u}
\frac{1}{|\Omega|}\int_{\Omega}\phi\left((x_0^{\prime},-\eta_2)\cdot \nabla f(x)\right)dx=1.
\end{eqnarray}
By Lemma \ref{2f}, the desired inclusion (\ref{3e}) will be established if we can show that
\begin{eqnarray}\label{3v}
\|(x_0^{\prime},\eta_1/2+\eta_2/2)\|_{f^s,\phi}\leq 1.
\end{eqnarray}

{\it Step 1:} We assume here that $f$ is nonnegative function from $W_0^{1,1}(\Omega)$ such that
\begin{eqnarray}\label{5t}
\mathcal{L}^1\left(\{y:\;\nabla_yf(x^{\prime},y)=0\}
\cap\{y:\;0<f(x^{\prime},y)<M_f(x^{\prime})\}\right)=0
\end{eqnarray}
for $\mathcal{L}^{n-1}$-a.e. $x^{\prime}\in\pi_{n-1}(\Omega)$ such that $M_f(x^{\prime})>0$. By Lemma \ref{3f}, (\ref{5t}) is fulfilled with $f$ replaced by $f^s$ as well.  By \cite[Theorem E]{CF06},
\begin{eqnarray}\label{2n}
\frac{df^s(x^{\prime},y)}{dy}=\nabla_y f^s(x^{\prime},y)\;\;{\rm for}\;\mathcal{L}^1{\text-}a.e.\;y\in\Omega_{x^{\prime}}^s.
\end{eqnarray}
Hence, by (\ref{2n}) and the coarea formula (\ref{2p}), for $\mathcal{L}^{n-1}$-a.e. $x^{\prime}\in\pi_{n-1}(\Omega)$ such that $M_f(x^{\prime})>0$, we have
\begin{eqnarray}\label{3w}
&&\int_{\{y:f^s(x^{\prime},y)>0\}}\phi\left(\left(x_0^{\prime},
\frac{\eta_1+\eta_2}{2}\right)\cdot\nabla f^s(x^{\prime},y)\right)dy\nonumber\\
&=&\int_{0}^{M_f(x^{\prime})}dt\int_{\partial^{\ast}\{y:f^s(x^{\prime},y)>t\}}
\frac{1}{|\nabla_yf^s|}\phi\left(\left(x_0^{\prime},\frac{\eta_1+\eta_2}{2}\right)\cdot\nabla f^s(x^{\prime},y)\right)d\mathcal{H}^0.
\end{eqnarray}

Moreover, by (\ref{2m}), for $\mathcal{L}^{n-1}$-a.e. $x^{\prime}\in\pi_{n-1}(\Omega)$ and  $\mathcal{L}^{1}$-a.e. $t\in (0,M_f(x^{\prime}))$, there exist two real numbers $y_1(x^{\prime},t)$ and $y_2(x^{\prime},t)$ such that
\begin{eqnarray}\label{3j}
\{y:\;f^s(x^{\prime},y)>t\}\;{\rm is \;equivalent\;to}\;(y_1(x^{\prime},t),y_2(x^{\prime},t))
\end{eqnarray}
and
\begin{eqnarray}\label{3k}
\nabla_yf^s(x^{\prime},y_1(x^{\prime},t))>0\;{\rm and}\;\nabla_yf^s(x^{\prime},y_2(x^{\prime},t))<0.
\end{eqnarray}
Thus, Eqs. (\ref{3h}), (\ref{3j}) and (\ref{3k}) ensure that, for $\mathcal{L}^{n-1}$-a.e. $x^{\prime}\in\pi_{n-1}(\Omega)$ such that $M_f(x^{\prime})>0$,
\begin{eqnarray}\label{3o}
&&\int_{\partial^{\ast}\{y:f^s(x^{\prime},y)>t\}}\frac{1}{|\nabla_yf^s|}
\phi\left(x_0^{\prime}\cdot(\nabla_1f^s,\dots,\nabla_{n-1}f^s)
+\frac{\eta_1+\eta_1}{2}\cdot\nabla_y f^s\right)d\mathcal{H}^0\nonumber\\
&=&\left.\frac{1}{|\nabla_yf^s|}\phi\left(x_0^{\prime}\cdot
(\nabla_1f^s,\dots,\nabla_{n-1}f^s)+\frac{\eta_1+\eta_2}{2}\cdot\nabla_y f^s\right)\right|_{(x^{\prime},y_1(x^{\prime},t))}\nonumber\\
&&\;+\left.\frac{1}{|\nabla_yf^s|}\phi\left(x_0^{\prime}
\cdot(\nabla_1f^s,\dots,\nabla_{n-1}f^s)+\frac{\eta_1+\eta_2}{2}\cdot\nabla_y f^s\right)\right|_{(x^{\prime},y_2(x^{\prime},t))}\nonumber\\
&=&-\frac{1}{2}\nabla_t\mu_f(x^{\prime},t)
\phi\left(x_0^{\prime}\cdot\left(\frac{\nabla_1\mu_f(x^{\prime},t)}
{-\nabla_t\mu_f(x^{\prime},t)},\cdots,
\frac{\nabla_{n-1}\mu_f(x^{\prime},t)}{-\nabla_t\mu_f(x^{\prime},t)}\right)
+\frac{\eta_1+\eta_2}{2}\cdot\frac{2}{-\nabla_t\mu_f(x^{\prime},t)}\right)\nonumber\\
&&\;-\frac{1}{2}\nabla_t\mu_f(x^{\prime},t)\phi\left(x_0^{\prime}
\cdot\left(\frac{\nabla_1\mu_f(x^{\prime},t)}{-\nabla_t\mu_f(x^{\prime},t)},
\cdots,
\frac{\nabla_{n-1}\mu_f(x^{\prime},t)}{-\nabla_t\mu_f(x^{\prime},t)}\right)
+\frac{\eta_1+\eta_2}{2}\cdot\frac{2}{\nabla_t\mu_f(x^{\prime},t)}\right)\nonumber\\
&=&-\frac{1}{2}\nabla_t\mu_f(x^{\prime},t)\phi\left(\frac{x_0^{\prime}
\cdot\left(\nabla_1\mu_f(x^{\prime},t),\cdots, \nabla_{n-1}\mu_f(x^{\prime},t)\right)+(\eta_1+\eta_2)}{-\nabla_t\mu_f(x^{\prime},t)}
\right)\nonumber\\
&&\;-\frac{1}{2}\nabla_t\mu_f(x^{\prime},t)\phi\left(\frac{x_0^{\prime}\cdot
\left(\nabla_1\mu_f(x^{\prime},t),\cdots, \nabla_{n-1}\mu_f(x^{\prime},t)\right)-(\eta_1+\eta_2)}{-\nabla_t\mu_f(x^{\prime},t)}
\right).
\end{eqnarray}

Let
\begin{eqnarray}\label{3m}
n(x^{\prime},t):=\mathcal{H}^0(\partial^{\ast}\{y:\;f(x^{\prime},y)>t\}).
\end{eqnarray}
By Lemma \ref{L7}, $n(x^{\prime},t)$ is an even number for $\mathcal{L}^{n-1}$-a.e. $x^{\prime}\in\pi_{n-1}(\Omega)$ and $\mathcal{L}^1$-a.e. $t\in(0,M_f(x^{\prime}))$. Let $k(x^{\prime},t):=\frac{1}{2}n(x^{\prime},t)$, then $k(x^{\prime},t)$ is an integer and $k(x^{\prime},t)\geq1$.

Let
\begin{eqnarray}
\partial_l^{\ast}\{y:\;f(x^{\prime},y)>t\}\;{\rm be\;the\;subset\;of}\;\partial^{\ast}\{y:\;f(x^{\prime},y)>t\}\;{\rm satisfying}\;v_y^{\mathcal S_f}(x^{\prime},y,t)>0\nonumber
\end{eqnarray}
and
\begin{eqnarray}
\partial_r^{\ast}\{y:\;f(x^{\prime},y)>t\}\;{\rm be\;the\;subset\;of}\;\partial^{\ast}\{y:\;f(x^{\prime},y)>t\}\;{\rm satisfying}\;v_y^{\mathcal S_f}(x^{\prime},y,t)<0.\nonumber
\end{eqnarray}

By $k(x^{\prime},t)\geq 1$, Lemma \ref{2q} and Lemma \ref{4e}, for $\mathcal{L}^{n-1}$-a.e. $x^{\prime}\in\pi_{n-1}(\Omega)$ such that $M_f(x^{\prime})>0$, we have the last expression of (\ref{3o})
\begin{eqnarray}\label{3p}
&\leq&-\frac{1}{2}\nabla_t\mu_f(x^{\prime},t)\phi\left(\frac{x_0^{\prime}
\cdot\left(\nabla_1\mu_f(x^{\prime},t),\cdots, \nabla_{n-1}\mu_f(x^{\prime},t)\right)+k(x^{\prime},t)(\eta_1+\eta_2)}{-\nabla_t\mu_f(x^{\prime},t)}\right)\nonumber\\
&&\;-\frac{1}{2}\nabla_t\mu_f(x^{\prime},t)\phi\left(\frac{x_0^{\prime}\cdot\left(\nabla_1\mu_f(x^{\prime},t),
\cdots, \nabla_{n-1}\mu_f(x^{\prime},t)\right)-k(x^{\prime},t)(\eta_1+\eta_2)}{-\nabla_t\mu_f(x^{\prime},t)}\right)\nonumber\\
&=&\frac{1}{2}\int_{\partial^{\ast}\{\dots\}}\frac{d\mathcal{H}^0}{|\nabla_yf|}
\phi\left(\frac{x_0^{\prime}\cdot\left(\int_{\partial^{\ast}\{\dots\}}\frac{\nabla_{1}f}{|\nabla_yf|}d\mathcal{H}^0,\cdots, \int_{\partial^{\ast}\{\dots\}}\frac{\nabla_{n-1}f}{|\nabla_yf|}d\mathcal{H}^0\right)+k(x^{\prime},t)(\eta_1+\eta_2)}{\int_{\partial^{\ast}\{\dots\}}\frac{d\mathcal{H}^0}{|\nabla_yf|}}\right)\nonumber\\
&&+\frac{1}{2}\int_{\partial^{\ast}\{\dots\}}\frac{d\mathcal{H}^0}{|\nabla_yf|}\phi\left(\frac{x_0^{\prime}\cdot\left(\int_{\partial^{\ast}\{\dots\}}\frac{\nabla_{1}f}{|\nabla_yf|}d\mathcal{H}^0,\cdots, \int_{\partial^{\ast}\{\dots\}}\frac{\nabla_{n-1}f}{|\nabla_yf|}d\mathcal{H}^0\right)-k(x^{\prime},t)(\eta_1+\eta_2)}{\int_{\partial^{\ast}\{\dots\}}\frac{d\mathcal{H}^0}{|\nabla_yf|}}\right)\nonumber\\
&=&\left[\phi\left(\frac{\int_{\partial_l^{\ast}\{\cdots\}}\frac{(x_0^{\prime},\eta_1)\cdot(\nabla_1f,\cdots,\nabla_{n-1}f,|\nabla_yf|)}{|\nabla_yf|}d\mathcal{H}^0+\int_{\partial_r^{\ast}
\{\cdots\}}\frac{(x_0^{\prime},-\eta_2)\cdot(\nabla_1f,\cdots,\nabla_{n-1}f,-|\nabla_yf|)}{|\nabla_yf|}d\mathcal{H}^0}{\int_{\partial^{\ast}\{\dots\}}\frac{d\mathcal{H}^0}{|\nabla_yf|}}\right)\right.\nonumber\\
&&+\left.\phi\left(\frac{\int_{\partial_l^{\ast}\{\cdots\}}\frac{(x_0^{\prime},-\eta_2)\cdot(\nabla_1f,\cdots,\nabla_{n-1}f,|\nabla_yf|)}{|\nabla_yf|}d\mathcal{H}^0+\int_{\partial_r^{\ast}
\{\cdots\}}\frac{(x_0^{\prime},\eta_1)\cdot(\nabla_1f,\cdots,\nabla_{n-1}f,-|\nabla_yf|)}{|\nabla_yf|}d\mathcal{H}^0}{\int_{\partial^{\ast}\{\dots\}}\frac{d\mathcal{H}^0}{|\nabla_yf|}}\right)\right]\nonumber\\
&&\cdot\frac{1}{2}\int_{\partial^{\ast}\{\dots\}}\frac{d\mathcal{H}^0}{|\nabla_yf|}
\end{eqnarray}
for $\mathcal{L}^1$-a.e. $t\in (0,M_f(x^{\prime}))$. Here $\partial^{\ast}\{\dots\}$ is a shorthand for $\partial^{\ast}\{y:f(x^{\prime},y)>t\}$. Since $\phi$ is a convex function,  Jensen's inequality ensures that the last expression
\begin{eqnarray}\label{3q}
&\leq& \frac{1}{2}\int_{\partial_l^{\ast}\{y:f(x^{\prime},y)>t\}}\frac{1}{|\nabla_yf|}\phi\left((x_0^{\prime},\eta_1)\cdot\left(\nabla_1f,\cdots,\nabla_{n-1}f,|\nabla_yf|\right)\right)d\mathcal{H}^0\nonumber\\
&&\;+\frac{1}{2}\int_{\partial_r^{\ast}\{y:f(x^{\prime},y)>t\}}\frac{1}{|\nabla_yf|}\phi\left((x_0^{\prime},-\eta_2)\cdot\left(\nabla_1f,\cdots,\nabla_{n-1}f,-|\nabla_yf|\right)\right)d\mathcal{H}^0\nonumber\\
&&\;+\frac{1}{2}\int_{\partial_l^{\ast}\{y:f(x^{\prime},y)>t\}}\frac{1}{|\nabla_yf|}\phi\left((x_0^{\prime},-\eta_2)\cdot\left(\nabla_1f,\cdots,\nabla_{n-1}f,|\nabla_yf|\right)\right)d\mathcal{H}^0\nonumber\\
&&\;+\frac{1}{2}\int_{\partial_r^{\ast}\{y:f(x^{\prime},y)>t\}}\frac{1}{|\nabla_yf|}\phi\left((x_0^{\prime},\eta_1)\cdot\left(\nabla_1f,\cdots,\nabla_{n-1}f,-|\nabla_yf|\right)\right)d\mathcal{H}^0\nonumber\\
&=&\frac{1}{2}\int_{\partial^{\ast}\{y:f(x^{\prime},y)>t\}}\frac{1}{|\nabla_yf|}\phi\left((x_0^{\prime},\eta_1)\cdot\left(\nabla_1f,\cdots,\nabla_{n-1}f,\nabla_yf\right)\right)d\mathcal{H}^0\nonumber\\
&&\;+\frac{1}{2}\int_{\partial^{\ast}\{y:f(x^{\prime},y)>t\}}\frac{1}{|\nabla_yf|}\phi\left((x_0^{\prime},-\eta_2)\cdot\left(\nabla_1f,\cdots,\nabla_{n-1}f,\nabla_yf\right)\right)d\mathcal{H}^0.\nonumber\\
\end{eqnarray}
Combining (\ref{3o}), (\ref{3p}) and (\ref{3q}) leads to
\begin{eqnarray}\label{3r}
&&\int_{\partial^{\ast}\{y:f^s(x^{\prime},y)>t\}}\frac{1}{|\nabla_yf^s|}\phi\left(x_0^{\prime}\cdot(\nabla_1f^s,\dots,\nabla_{n-1}f^s)+\frac{\eta_1+\eta_2}{2}\cdot\nabla_y f^s\right)d\mathcal{H}^0\nonumber\\
&\leq&\frac{1}{2}\int_{\partial^{\ast}\{y:f(x^{\prime},y)>t\}}\frac{1}{|\nabla_yf|}\phi\left((x_0^{\prime},\eta_1)\cdot\left(\nabla_1f,\cdots,\nabla_{n-1}f,\nabla_yf\right)\right)d\mathcal{H}^0\nonumber\\
&&\;+\frac{1}{2}\int_{\partial^{\ast}\{y:f(x^{\prime},y)>t\}}\frac{1}{|\nabla_yf|}\phi\left((x_0^{\prime},-\eta_2)\cdot\left(\nabla_1f,\cdots,\nabla_{n-1}f,\nabla_yf\right)\right)d\mathcal{H}^0
\end{eqnarray}
for $\mathcal{L}^{n-1}$-a.e. $x^{\prime}\in\pi_{n-1}(\Omega)$ such that $M_f(x^{\prime})>0$ and for $\mathcal{L}^1$-a.e. $t\in (0,M_f(x^{\prime}))$.

Integrating (\ref{3r}) first with respect to $t$ over $(0,M_f(x^{\prime}))$, and then with respect to $x^{\prime}$ over $\pi_{n-1}(\Omega)$, by Fubini's theorem and (\ref{3w}), one gets
\begin{eqnarray}\label{3x}
&&\int_{\pi_{n-1}(\Omega)\times\mathbb{R}_y}\phi\left(\left(x_0^{\prime},\frac{\eta_1+\eta_2}{2}\right)\cdot\nabla f^s(x^{\prime},y)\right)dx^{\prime}dy\nonumber\\
&=&\int_{\pi_{n-1}(\Omega)}dx^{\prime}\int_{\{y:f^s(x^{\prime},y)>0,\;\nabla_yf\neq0\}}\phi\left(\left(x_0^{\prime},\frac{\eta_1+\eta_2}{2}\right)\cdot\nabla f^s(x^{\prime},y)\right)dy\nonumber\\
&=&\int_{\pi_{n-1}(\Omega)}dx^{\prime}\int_{0}^{M_f(x^{\prime})}dt\int_{\partial^{\ast}\{y:f^s(x^{\prime},y)>t\}}\frac{1}{|\nabla_yf^s|}
\phi\left(\left(x_0^{\prime},\frac{\eta_1+\eta_2}{2}\right)\cdot\nabla f^s(x^{\prime},y)\right)d\mathcal{H}^0\nonumber\\
&\leq&\frac{1}{2}\int_{\pi_{n-1}(\Omega)}dx^{\prime}\int_0^{M_f(x^{\prime})}dt\int_{\partial^{\ast}\{y:f(x^{\prime},y)>t\}}
\frac{1}{|\nabla_yf|}\phi\left((x_0^{\prime},\eta_1)\cdot\nabla f(x^{\prime},y)\right)d\mathcal{H}^0\nonumber\\
&&\;+\frac{1}{2}\int_{\pi_{n-1}(\Omega)}dx^{\prime}\int_0^{M_f(x^{\prime})}dt\int_{\partial^{\ast}\{y:f(x^{\prime},y)>t\}}\frac{1}{|\nabla_yf|}\phi
\left((x_0^{\prime},-\eta_2)\cdot\nabla f(x^{\prime},y)\right)d\mathcal{H}^0\nonumber\\
&=&\frac{1}{2}\int_{\pi_{n-1}(\Omega)\times\mathbb{R}_y}\phi\left((x_0^{\prime},\eta_1)\cdot\nabla f(x^{\prime},y)\right)dx^{\prime}dy+\frac{1}{2}\int_{\pi_{n-1}(\Omega)\times\mathbb{R}_y}\phi\left((x_0^{\prime},-\eta_2)\cdot\nabla f(x^{\prime},y)\right)dx^{\prime}dy.\nonumber\\
\end{eqnarray}

Note that here we have made use of (\ref{3w}) and of an analogous equality for $f$.

{\it Step 2:} Let $f$ be any nonnegative function from $W_0^{1,1}(\Omega)$ and let $\omega:=\pi_{n-1}(\Omega)$. Lemma 4.5. in \cite{CF06} ensures that there exists a sequence
$\{f_h\}$ of nonnegative Lipschitz functions, with compact support in $\mathbb{R}^n$, satisfying (\ref{5t})
and converging strongly to $f$ in $W_0^{1,1}(\omega\times\mathbb{R}_y)$. Assume, for a moment, that $\phi$ satisfies
\begin{eqnarray}\label{5u}
0\leq\phi(x)\leq C(1+|x|)\;\;{\rm for}\;x\in\mathbb{R}
\end{eqnarray}
for some positive constant $C$.
 Then for $x\in\mathbb{R}^n$, setting
$$F_1(x):=\phi\left(\left(x_0^{\prime},\frac{\eta_1+\eta_2}{2}\right)\cdot x\right)$$
 $$F_2(x):=\phi\left((x_0^{\prime},\eta_1)\cdot x\right)$$
 and
 $$F_3(x):=\phi\left((x_0^{\prime},-\eta_2)\cdot x\right),$$
 we obtain that $F_1$, $F_2$ and $F_3$ are globally Lipschitz continuous, and hence $F_i(\nabla f_h)$ converges to $F_i(\nabla f)$
in $L^1(\omega\times\mathbb{R}_y)$  for $i=1,2,3$. On the other hand, since Steiner rearrangement is continuous in $W^{1,1}$ (see
e.g. \cite[Theorem 1.]{Bu97}), $f_h^s$ converges to $f^s$ in $W^{1,1}(\omega\times\mathbb{R}_y)$. Thus, by Fatou's lemma and by (\ref{3x}), we get
\begin{eqnarray}\label{5v}
&&\int_{\pi_{n-1}(\Omega)\times\mathbb{R}_y}F_1\left(\nabla f^s(x^{\prime},y)\right)dx^{\prime}dy\nonumber\\
&\leq&\liminf_{h\rightarrow\infty}\int_{\pi_{n-1}(\Omega)\times\mathbb{R}_y}F_1\left(\nabla f_h^s(x^{\prime},y)\right)dx^{\prime}dy\nonumber\\
&\leq&\liminf_{h\rightarrow\infty}\frac{1}{2}\int_{\pi_{n-1}(\Omega)\times\mathbb{R}_y}F_2\left(\nabla f_h(x^{\prime},y)\right)dx^{\prime}dy\nonumber\\
&&+\liminf_{h\rightarrow\infty}\frac{1}{2}\int_{\pi_{n-1}(\Omega)\times\mathbb{R}_y}F_3\left(\nabla f_h(x^{\prime},y)\right)dx^{\prime}dy\nonumber\\
&=&\frac{1}{2}\int_{\pi_{n-1}(\Omega)\times\mathbb{R}_y}F_2\left(\nabla f(x^{\prime},y)\right)dx^{\prime}dy\nonumber\\
&&+\frac{1}{2}\int_{\pi_{n-1}(\Omega)\times\mathbb{R}_y}F_3\left(\nabla f(x^{\prime},y)\right)dx^{\prime}dy.
\end{eqnarray}
 Let us now remove assumption (\ref{5u}). Since $\phi$ is nonnegative
and convex, there exist sequences $\{a_j\}$ of $\mathbb{R}$ and $\{b_j\}$ of $\mathbb{R}$ such that
\begin{eqnarray}
\phi(x)=\sup_{j\in\mathbb{N}}\{a_jx+b_j\}=\sup_{j\in\mathbb{N}}\{(a_jx+b_j)^+\}\;\;{\rm for\;every}\;x\in\mathbb{R}.
\end{eqnarray} Set, for $N\in\mathbb{N}$,
\begin{eqnarray}
\phi_N(x):=\sup_{1\leq j\leq N}\{(a_jx+b_j)^+\}\;\;{\rm for}\;x\in\mathbb{R}.
\end{eqnarray}
Obviously, $\phi_N(x)$ converges monotonically to $\phi(x)$ for every $x\in\mathbb{R}$. Since $\phi_N$ satisfies (\ref{5u}), then (\ref{5v}) holds with $\phi$ replaced by $\phi_N$. Inequality (\ref{5v}) then follows by monotone convergence.

By Step 1 and Step 2, (\ref{3x}) is established for $f\in W_0^{1,1}(\Omega)$. By the definition (\ref{2ff}), (\ref{3x}), (\ref{3t}) and (\ref{3u}), (\ref{3v}) is established.
\end{proof}

\noindent{\bf Proof of Theorem \ref{2.1}}. By Proposition \ref{4f} and (\ref{1a}), Theorem \ref{2.1} is established.

\noindent{\bf Proof of Theorem \ref{2.2}}. By Theorem \ref{2.1}, Remark \ref{19b} and Lemma {\ref{2x}}, Theorem \ref{2.2} is established.\\

Next, we prove Theorem \ref{2.3}.
\begin{lem}\label{L1}
Let $\phi\in\mathcal{N}_s$. Let $\Omega$ be a bounded open subset of $\mathbb{R}^n$ satisfying (\ref{2b}) and let $f$ be a nonnegative function from $W_0^{1,\Phi}(\Omega)$ satisfying (\ref{2i}) and (\ref{2r}).
Then, for $\mathcal{L}^n$-a.e. $(x^{\prime},t)\in \pi_{n-1,n+1}(S_f)^+$, there exist $y_1(x^{\prime},t)$, $y_2(x^{\prime},t)\in\mathbb{R}$ such that $y_1(x^{\prime},t)<y_2(x^{\prime},t)$ and that
\begin{eqnarray}\label{4g}
\{y:\;f(x^{\prime},y)>t\}\;is\;equivalent\;to\;(y_1(x^{\prime},t),y_2(x^{\prime},t)),
\end{eqnarray}
\begin{eqnarray}\label{4i}
\nabla_y f(x^{\prime},y_1(x^{\prime},t))=-\nabla_yf(x^{\prime},y_2(x^{\prime},t))
\end{eqnarray}
and
\begin{eqnarray}\label{4j}
\frac{\nabla_{x^{\prime}} f(x^{\prime},y_1(x^{\prime},t))}{|\nabla_yf(x^{\prime},y_1(x^{\prime},t))|}=
\frac{\nabla_{x^{\prime}}f(x^{\prime},y_2(x^{\prime},t))}{|\nabla_y
f(x^{\prime},y_2(x^{\prime},t))|}+z^{\prime}_0,
\end{eqnarray}
where $z^{\prime}_0\in \mathbb{R}^{n-1}$ is a constant vector.
\end{lem}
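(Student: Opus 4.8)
\emph{Proof sketch.} The plan is to use the equality hypothesis (\ref{2r}) to turn every inequality in the proof of Proposition \ref{4f} into an equality, and then to read off (\ref{4g})--(\ref{4j}) from the resulting pointwise equality conditions. Since Steiner symmetrization of a convex body preserves volume, (\ref{2r}) and (\ref{1a}) give $|(B_{\phi}(f))^s|=|B_{\phi}(f)|=|B_{\phi}(f^s)|$, while Proposition \ref{4f} gives $(B_{\phi}(f))^s\subset B_{\phi}(f^s)$; hence $(B_{\phi}(f))^s=B_{\phi}(f^s)$. Applying the last assertion of Lemma \ref{2f} with $K=B_{\phi}(f)$ and $L=B_{\phi}(f^s)$, together with Lemma \ref{2k} and (\ref{3t})--(\ref{3u}), shows that for every $x_0^{\prime},\eta_1,\eta_2$ with $\|(x_0^{\prime},\eta_1)\|_{f,\phi}=\|(x_0^{\prime},-\eta_2)\|_{f,\phi}=1$ and $\eta_1\ne-\eta_2$, both sides of (\ref{3x}) are equal to $|\Omega|$. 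Because hypothesis (\ref{2i}) is precisely the condition (\ref{5t}) used in Step 1 of the proof of Proposition \ref{4f} (and, by Lemma \ref{3f}, it passes to $f^s$), the identity version of (\ref{3x}) is available here without the approximation of Step 2, so equality in (\ref{3x}) forces equality in the pointwise inequality (\ref{3r}) for $\mathcal{L}^{n-1}$-a.e.\ $x^{\prime}\in\pi_{n-1}(\Omega)$ with $M_f(x^{\prime})>0$ and $\mathcal{L}^1$-a.e.\ $t\in(0,M_f(x^{\prime}))$; tracing (\ref{3r}) back through its derivation, this in turn forces equality, for a.e.\ such $(x^{\prime},t)$, in both the monotonicity step (\ref{3o})$\to$(\ref{3p}) and the Jensen step (\ref{3p})$\to$(\ref{3q}).

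Equality in (\ref{3o})$\to$(\ref{3p}) was obtained from Lemma \ref{2q} and $k(x^{\prime},t)\ge1$; since $\phi\in\mathcal{N}_s$, Lemma \ref{2q} makes the relevant function $\Psi$ strictly increasing, so (using $\eta_1+\eta_2\ne0$ and $\nabla_t\mu_f(x^{\prime},t)\ne0$) equality forces $k(x^{\prime},t)=1$, i.e.\ $n(x^{\prime},t)=2$. A one-dimensional set of finite measure and finite perimeter with exactly two points in its reduced boundary is equivalent to an interval; this yields (\ref{4g}), and the sign conditions (\ref{2v}) of Theorem \ref{2t} together with Theorem \ref{T1} identify $y_1(x^{\prime},t)$ as the reduced-boundary point where $\nabla_yf>0$ and $y_2(x^{\prime},t)$ as the one where $\nabla_yf<0$. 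Writing $y_i=y_i(x^{\prime},t)$ and $\nabla_{x^{\prime}}f=(\nabla_1f,\dots,\nabla_{n-1}f)$: with $n(x^{\prime},t)=2$ each of the two $\phi$-arguments in (\ref{3p}) is an average of exactly two values, so strict convexity of $\phi$ and equality in Jensen's inequality (\ref{3q}) force those two values to coincide; using $|\nabla_yf|=\nabla_yf$ at $y_1$ and $|\nabla_yf|=-\nabla_yf$ at $y_2$, this gives
\begin{align}
x_0^{\prime}\cdot\nabla_{x^{\prime}}f(x^{\prime},y_1)+\eta_1\,|\nabla_yf(x^{\prime},y_1)| &= x_0^{\prime}\cdot\nabla_{x^{\prime}}f(x^{\prime},y_2)+\eta_2\,|\nabla_yf(x^{\prime},y_2)|, \notag\\
x_0^{\prime}\cdot\nabla_{x^{\prime}}f(x^{\prime},y_1)-\eta_2\,|\nabla_yf(x^{\prime},y_1)| &= x_0^{\prime}\cdot\nabla_{x^{\prime}}f(x^{\prime},y_2)-\eta_1\,|\nabla_yf(x^{\prime},y_2)|. \notag
\end{align}
Subtracting these and using $\eta_1+\eta_2\ne0$ gives $|\nabla_yf(x^{\prime},y_1)|=|\nabla_yf(x^{\prime},y_2)|$, which together with $\nabla_yf(x^{\prime},y_1)>0>\nabla_yf(x^{\prime},y_2)$ is exactly (\ref{4i}).

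For (\ref{4j}), add the two displayed identities and divide by $|\nabla_yf(x^{\prime},y_1)|=|\nabla_yf(x^{\prime},y_2)|$ to obtain
\[
x_0^{\prime}\cdot\left(\frac{\nabla_{x^{\prime}}f(x^{\prime},y_1)}{|\nabla_yf(x^{\prime},y_1)|}-\frac{\nabla_{x^{\prime}}f(x^{\prime},y_2)}{|\nabla_yf(x^{\prime},y_2)|}\right)=\eta_2-\eta_1,
\]
whose right-hand side depends only on $x_0^{\prime}$ — through the two points in which the vertical line $\{x^{\prime}=x_0^{\prime}\}$ meets $\partial B_{\phi}(f)$ — and not on $(x^{\prime},t)$. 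Since $B_{\phi}(f)\in\mathcal{K}_o^n$ by Lemma \ref{1d}, every $x_0^{\prime}$ in a small ball about the origin of $\mathbb{R}^{n-1}$ admits such a pair $\eta_1,\eta_2$; choosing a countable dense set of such $x_0^{\prime}$ (to absorb the corresponding exceptional null sets) containing $n-1$ linearly independent vectors, we conclude that $\frac{\nabla_{x^{\prime}}f(x^{\prime},y_1)}{|\nabla_yf(x^{\prime},y_1)|}-\frac{\nabla_{x^{\prime}}f(x^{\prime},y_2)}{|\nabla_yf(x^{\prime},y_2)|}$ agrees $\mathcal{L}^n$-a.e.\ with a fixed vector $z_0^{\prime}\in\mathbb{R}^{n-1}$, which is (\ref{4j}).

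The step I expect to require the most care is the passage from equality in the integrated inequality (\ref{3x}) to pointwise equality in (\ref{3r}): one must verify that (\ref{2i}) genuinely allows one to bypass the approximation Step 2 of Proposition \ref{4f}, that the required regularity transfers to $f^s$ via Lemma \ref{3f}, and that the whole family of exceptional null sets — now indexed by a continuum of admissible triples $(x_0^{\prime},\eta_1,\eta_2)$ — can be reduced to a countable collection by a dense selection before $z_0^{\prime}$ may legitimately be declared a single constant.
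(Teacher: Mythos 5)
Your proposal is correct and follows essentially the same route as the paper's proof: equality in (\ref{2r}) forces pointwise equality a.e.\ in (\ref{3p}) and (\ref{3q}), the strict monotonicity from Lemma \ref{2q} gives $k(x^{\prime},t)=1$ and hence (\ref{4g}), Jensen equality with strict convexity gives the two linear identities whose difference yields (\ref{4i}) and whose combination, varied over $n-1$ linearly independent choices of $x_0^{\prime}$, yields (\ref{4j}). The only (harmless) deviations are that you justify equality in (\ref{3x}) explicitly via $(B_{\phi}(f))^s=B_{\phi}(f^s)$ and Lemma \ref{2f} where the paper merely asserts it, and you read off the signs in (\ref{4i}) from Theorem \ref{2t} rather than from the monotonicity in $t$ of $y_1(x^{\prime},\cdot)$ and $y_2(x^{\prime},\cdot)$ as the paper does.
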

\begin{proof}
Assumption (\ref{2r}) ensures that equality necessarily holds in (\ref{3x}).
Thus, equality holds in (\ref{3r}) for $\mathcal{L}^{n-1}$-a.e. $x^{\prime}\in\pi_{n-1}(\Omega)$ and $\mathcal{L}^1$-a.e. $t\in (0,M_f(x^{\prime}))$. Therefore, equality holds both in (\ref{3p}) and in (\ref{3q}) for the same $x^{\prime}$ and $t$.

By Lemma \ref{2q}, $k(x^{\prime},t)=1$ whenever it holds in (\ref{3p}). Thus, by the isoperimetric theorem in $\mathbb{R}$, the set $\{y:f(x^{\prime},y)>t\}$ is equivalent to some interval, say
$(y_1(x^{\prime},t),y_2(x^{\prime},t))$ for $\mathcal{L}^{n-1}$-a.e. $x^{\prime}\in\pi_{n-1}(\Omega)$ and $\mathcal{L}^1$-a.e. $t\in (0,M_f(x^{\prime}))$. Since $f>0$ $\mathcal{L}^n$-a.e. in $\Omega$, then
$\pi_{n-1}(\mathcal{S}_f)^+$ is equivalent to $\pi_{n-1}(\Omega)$. Thus,
\begin{eqnarray}\label{4o}
\pi_{n-1,n+1}(\mathcal{S}_f)^+\;\;{\rm is \;equivalent\;to}\;\;\bigcup_{x^{\prime}\in\pi_{n-1}(\Omega)}\{x^{\prime}\}\times(0,M_f(x^{\prime})).
\end{eqnarray}
Hence (\ref{4g}) follows.

Since $\phi$ is strictly convex, if equality holds in (\ref{3q}) for  $\mathcal{L}^{n-1}$-a.e. $x^{\prime}\in\pi_{n-1}(\Omega)$ and $\mathcal{L}^1$-a.e. $t\in (0,M_f(x^{\prime}))$, then for the same values of $x^{\prime}$ and $t$, we have
\begin{eqnarray}\label{4k}
&&(x_0^{\prime},\eta_1)(\nabla_1f(x^{\prime},y_1(x^{\prime},t)),\cdots,\nabla_{n-1}f(x^{\prime},y_1(x^{\prime},t)),|\nabla_yf(x^{\prime},y_1(x^{\prime},t))|)\nonumber\\
&=&(x_0^{\prime},-\eta_2)(\nabla_1f(x^{\prime},y_2(x^{\prime},t)),\cdots,\nabla_{n-1}f(x^{\prime},y_2(x^{\prime},t)),-|\nabla_yf(x^{\prime},y_2(x^{\prime},t))|)\nonumber\\
\end{eqnarray}
and
\begin{eqnarray}\label{4l}
&&(x_0^{\prime},-\eta_2)(\nabla_1f(x^{\prime},y_1(x^{\prime},t)),\cdots,\nabla_{n-1}f(x^{\prime},y_1(x^{\prime},t)),|\nabla_yf(x^{\prime},y_1(x^{\prime},t))|)\nonumber\\
&=&(x_0^{\prime},\eta_1)(\nabla_1f(x^{\prime},y_2(x^{\prime},t)),\cdots,\nabla_{n-1}f(x^{\prime},y_2(x^{\prime},t)),-|\nabla_yf(x^{\prime},y_2(x^{\prime},t))|).\nonumber\\
\end{eqnarray}
Subtracting (\ref{4l}) from (\ref{4k}), we have
\begin{eqnarray}\label{4m}
(\eta_1+\eta_2)|\nabla_yf(x^{\prime},y_1(x^{\prime},t))|=(\eta_1+\eta_2)|\nabla_yf(x^{\prime},y_2(x^{\prime},t))|.
\end{eqnarray}
Since $\eta_1+\eta_2\neq 0$,
\begin{eqnarray}\label{4n}
|\nabla_yf(x^{\prime},y_1(x^{\prime},t))|=|\nabla_yf(x^{\prime},y_2(x^{\prime},t))|.
\end{eqnarray}

It is easily seen from (\ref{4g}) that for $\mathcal{L}^{n-1}$-a.e. $x^{\prime}\in\pi_{n-1}(\Omega)$ the function $y_1(x^{\prime},\cdot)$ agrees $\mathcal{L}^1$-a.e. in $(0,M_f(x^{\prime}))$ with a nondecreasing function, whereas $y_2(x^{\prime},\cdot)$ agrees $\mathcal{L}^1$-a.e. in $(0,M_f(x^{\prime}))$ with a nonincreasing function in the same interval. Therefore, $f(x^{\prime},\cdot)$ is equivalent to a function (whose level sets are open intervals) which is nondecreasing in $(-\infty,\beta_1(x^{\prime}))$ and nonincreasing in $(\beta_2(x^{\prime}),+\infty)$, where
\begin{eqnarray}
\beta_1(x^{\prime})={\rm ess}\sup\{y_1(x^{\prime},t):\;t<M_f(x^{\prime})\},\;\;\beta_2(x^{\prime})={\rm ess}\inf\{y_2(x^{\prime},t):\;t<M_f(x^{\prime})\}.\nonumber
\end{eqnarray}
Hence, Eq. (\ref{4n}) implies, in fact, (\ref{4i}).

Putting (\ref{4n}) into (\ref{4k}) (or \ref{4l}),  we have
\begin{eqnarray}\label{4b}
(\eta_1-\eta_2)|\nabla_yf(x^{\prime},y_1(x^{\prime},t))|+x_0^{\prime}\cdot\left(\nabla_{x^{\prime}}f(x^{\prime},y_1(x^{\prime},t))-\nabla_{x^{\prime}}f(x^{\prime},y_2(x^{\prime},t))\right)=0
\end{eqnarray}
for $\mathcal{L}^1$-a.e. $t\in (0, M_f(x^{\prime}))$. However (\ref{4b}) says that, for some null set $N^{\prime}\subset (0,M_f(x^{\prime}))$
$$\left\{\frac{\nabla_{x^{\prime}}f(x^{\prime},y_1(x^{\prime},t))-\nabla_{x^{\prime}}f(x^{\prime},y_2(x^{\prime},t))}{|\nabla_{y}f(x^{\prime},y_1(x^{\prime},t))|}:\; t\in(0,M_f(x^{\prime}))\backslash N^{\prime}\right\}$$
is a set of points that must lie in a hyperplane of $\mathbb{R}^{n-1}$ with normal vector $x_0^{\prime}$. But $x_0^{\prime}$ can be chosen in any direction in $\mathbb{R}^{n-1}$, so there exists an $z_0^{\prime}\in\mathbb{R}^{n-1}$ with
$$\left\{\frac{\nabla_{x^{\prime}}f(x^{\prime},y_1(x^{\prime},t))-\nabla_{x^{\prime}}f(x^{\prime},y_2(x^{\prime},t))}{|\nabla_{y}f(x^{\prime},y_1(x^{\prime},t))|}:\; t\in(0,M_f(x^{\prime}))\backslash N^{\prime}\right\}=\{z_0^{\prime}\}.$$
Thus, (\ref{4j}) is established.
\end{proof}

\begin{lem}\cite[Lemma 4.8 and Lemma 4.10]{CF06}\label{L2}
Let $\phi$, $\Omega$ and $f$ be given as in Theorem \ref{2.3}, and
let $y_1(x^{\prime},t)$ and $y_2(x^{\prime},t)$ be defined as in Lemma \ref{L1}. Then there exists a function $b:\pi_{n-1}(\Omega)\rightarrow\mathbb{R}$ such that, for $\mathcal{L}^{n-1}$-a.e. $x^{\prime}\in \pi_{n-1}(\Omega)$,
\begin{eqnarray}\label{4q}
\frac{1}{2}(y_1(x^{\prime},t)+y_2(x^{\prime},t))=b(x^{\prime})\;\; for \;\mathcal{L}^1{\text-}a.e.\;t\in(0,M_f(x^{\prime})).
\end{eqnarray}
Moreover, $b(x^{\prime})\in W^{1,1}_{{\rm loc}}(\pi_{n-1}(\Omega))$.
\end{lem}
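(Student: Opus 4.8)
The plan is to follow the measure-theoretic arguments of \cite[Lemmas 4.8 and 4.10]{CF06} essentially verbatim, taking the structural conclusions (\ref{4g}), (\ref{4i}), (\ref{4j}) of Lemma \ref{L1} as the only input; once these are in hand the proof no longer sees the Orlicz affine energy.

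First I would construct $b$ and verify (\ref{4q}). Fix a good $x'$ with $M_f(x')>0$. By (\ref{4g}), up to equivalence $f(x',\cdot)$ is nondecreasing on an interval $(-\infty,\beta_1(x'))$, equal to $M_f(x')$ on $[\beta_1(x'),\beta_2(x')]$, and nonincreasing on $(\beta_2(x'),+\infty)$, and $y_1(x',\cdot)$, $y_2(x',\cdot)$ are the inverses of its increasing and decreasing branches. Assumption (\ref{2i}) forces $\nabla_yf(x',\cdot)\neq 0$ $\mathcal{L}^1$-a.e.\ on $\{f(x',\cdot)<M_f(x')\}$, so $y_1(x',\cdot)$ and $y_2(x',\cdot)$ are locally absolutely continuous on $(0,M_f(x'))$ with $\partial_t y_i(x',t)=\tfrac{1}{\nabla_yf(x',y_i(x',t))}$ (equivalently, their $t$-derivatives are computed from the coarea formula (\ref{2p}) as in Lemma \ref{4e}). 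By (\ref{4i}) these two derivatives are opposite, hence $y_1(x',\cdot)+y_2(x',\cdot)$ is constant on $(0,M_f(x'))$ and (\ref{4q}) holds with $b(x'):=\tfrac12\big(y_1(x',t)+y_2(x',t)\big)$. Since $\Omega$ is bounded, $y_1(x',t),y_2(x',t)$ remain in a fixed bounded set, so $b$ is bounded and lies in $L^1_{\rm loc}(\pi_{n-1}(\Omega))$; connectedness of $\pi_{n-1}(\Omega)$ in (\ref{2b}) lets us regard $b$ as a single function on the whole projection.

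The hard part is the $W^{1,1}_{\rm loc}$ regularity of $b$. Differentiating $f(x',y_i(x',t))=t$ in $x'$ and using (\ref{4i}) gives, formally,
\[
\nabla_{x'}\big(y_1(x',t)+y_2(x',t)\big)=-\left(\frac{\nabla_{x'}f(x',y_1)}{|\nabla_yf(x',y_1)|}-\frac{\nabla_{x'}f(x',y_2)}{|\nabla_yf(x',y_2)|}\right)=-z_0'(x'),
\]
a vector field \emph{independent of $t$} by (\ref{4j}) (here $y_i$ abbreviates $y_i(x',t)$). To make this rigorous I would: (i) for $\mathcal{L}^1$-a.e.\ $t\in(0,{\rm ess}\sup f)$ apply Vol'pert's theorem (Theorem \ref{2t}) to the finite-perimeter set $\{f>t\}$, getting that $y_1(\cdot,t),y_2(\cdot,t)$ are $BV$ on $\pi_{n-1}(\{f>t\})$ with endpoints on $\partial^{\ast}\{f>t\}$ where $v^{\{f>t\}}_n\neq0$; (ii) exploit the cancellation in the sum $y_1+y_2$ — in contrast with the difference $y_2-y_1=\mu_f$, which is only $BV$ by Lemma \ref{4e} — to upgrade $b$ from $BV$ to $W^{1,1}_{\rm loc}$ on each $\pi_{n-1}(\{f>t\})$; (iii) for each fixed $t_0>0$, integrate the displayed identity over $t\in(t_0/2,t_0)$ and use the coarea formula (\ref{2p}) branch by branch to bound $\tfrac{t_0}{2}\,|z_0'(x')|$ by $\int_{\Omega_{x'}}|\nabla f(x',\cdot)|\,dy$ — integrable in $x'$ since $f\in W_0^{1,1}(\Omega)$ by Lemma \ref{6g} — so that $z_0'\in L^1(\{M_f>t_0\})$ is the weak gradient there of $2b$; (iv) let $t_0\downarrow0$ and patch, the sets $\{M_f>t_0\}$ exhausting $\pi_{n-1}(\Omega)$ since $M_f>0$ a.e.

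I expect the main obstacle to be steps (ii)--(iii): converting the almost-everywhere pointwise relations into a genuine distributional identity with locally integrable right-hand side, and controlling the degeneration of $M_f(x')$ near $\partial(\pi_{n-1}(\Omega))$ (handled by working on $\{M_f>t_0\}$ and letting $t_0\to0$). This is precisely the delicate $BV$ bookkeeping carried out in \cite[Lemma 4.10]{CF06}, which should transfer with only cosmetic changes, since after Lemma \ref{L1} all remaining information is purely geometric.
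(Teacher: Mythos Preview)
Your proposal is correct and follows exactly the route the paper takes: the paper gives no proof of its own for Lemma \ref{L2} but simply cites \cite[Lemmas 4.8 and 4.10]{CF06}, and your sketch is a faithful outline of how those lemmas are proved once the structural conclusions (\ref{4g})--(\ref{4j}) of Lemma \ref{L1} are in hand. One minor simplification you have overlooked: by the statement of Lemma \ref{L1}, the vector $z_0'$ is a \emph{constant} in $\mathbb{R}^{n-1}$, independent of both $x'$ and $t$ (this is established in the proof of Lemma \ref{L1} by varying the direction $x_0'$ over all of $\mathbb{R}^{n-1}$ with the associated $\eta_1,\eta_2$ fixed by the norm-one condition), so your step (iii) bounding $|z_0'(x')|$ is unnecessary --- local integrability of the candidate gradient is automatic, and the real work in \cite[Lemma 4.10]{CF06} is your step (ii), showing that the singular parts of $Dy_1$ and $Dy_2$ cancel in the sum.
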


\begin{lem}\label{L5}
Let $\phi$, $\Omega$ and $f$ be given as in Theorem \ref{2.3}, and let $b:\pi_{n-1}(\Omega)\rightarrow\mathbb{R}$ be the function defined in Lemma \ref{L2}. If
there exist $z_0^{\prime}\in\mathbb{R}^{n-1}$ and $y_0\in\mathbb{R}$ such that
\begin{eqnarray}
b(x^{\prime})=z_0^{\prime}\cdot x^{\prime}+y_0\;\;for\;\mathcal{L}^{n-1}{\text-}a.e.\;x^{\prime}\in\pi_{n-1}(\Omega),
\end{eqnarray}
then there exist $A\in SL(n)$ and $x_0\in\mathbb{R}^n$ such that
\begin{eqnarray}
f(x)=f^s(Ax+x_0)\;\;for\;\mathcal{L}^{n}{\text-}a.e.\;x^{\prime}\in\Omega.
\end{eqnarray}
\end{lem}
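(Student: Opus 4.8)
The plan is to show that the hypothesis $b(x')=z_0'\cdot x'+y_0$ forces every one‑dimensional slice $f(x',\cdot)$ to be the translate by $b(x')$ of the corresponding slice $f^s(x',\cdot)$, and then to read the associated transformation of $\mathbb{R}^n$ off as a volume‑preserving shear composed with a translation.

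First I would record the shapes of the one‑dimensional super‑level sets. Straight from the definition of $f^s$ one has $\{y:f^s(x',y)>t\}=(-\mu_f(x',t)/2,\mu_f(x',t)/2)$ for all $x'$ and all $t>0$. On the other hand, Lemma \ref{L1} provides, for $\mathcal{L}^{n-1}$‑a.e. $x'\in\pi_{n-1}(\Omega)$ and $\mathcal{L}^1$‑a.e. $t\in(0,M_f(x'))$, an interval $(y_1(x',t),y_2(x',t))$ equivalent to $\{y:f(x',y)>t\}$; its length is necessarily $\mu_f(x',t)$, and Lemma \ref{L2} combined with the affine form of $b$ gives $\tfrac12\bigl(y_1(x',t)+y_2(x',t)\bigr)=z_0'\cdot x'+y_0$ for $\mathcal{L}^1$‑a.e. such $t$. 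Hence $\{y:f(x',y)>t\}$ is equivalent to $(z_0'\cdot x'+y_0)+\{y:f^s(x',y)>t\}$ for $\mathcal{L}^1$‑a.e. $t>0$, the values $t\ge M_f(x')$ being trivial since both sides are then $\mathcal{L}^1$‑null.

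Next I would upgrade this family of set identities to an identity of functions. Fixing an admissible $x'$, the two nonnegative functions $y\mapsto f(x',y)$ and $y\mapsto f^s(x',y-z_0'\cdot x'-y_0)$ have equivalent super‑level sets for $\mathcal{L}^1$‑a.e. $t>0$; therefore, writing $g(y)=\int_0^{\infty}\chi_{\{g>t\}}(y)\,dt$ and invoking Fubini's theorem in $(y,t)$, they coincide for $\mathcal{L}^1$‑a.e. $y$. One more application of Fubini's theorem, now in $x'$, yields
\begin{equation}\label{L5e}
f(x',y)=f^s\bigl(x',\,y-z_0'\cdot x'-y_0\bigr)\qquad\text{for }\mathcal{L}^n\text{-a.e. }(x',y)\in\Omega .
\end{equation}
Here it matters that $f>0$ $\mathcal{L}^n$‑a.e. in $\Omega$ --- a consequence of (\ref{2i}) already exploited in the proof of Lemma \ref{L1} --- so that the fiberwise level‑set description above indeed covers the support of $f(x',\cdot)$.

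Finally I would display $A$ and $x_0$. Let $A$ be the shear $(x',y)\mapsto(x',y-z_0'\cdot x')$, a unipotent linear map whose matrix in the coordinates $(x_1,\dots,x_{n-1},y)$ is lower triangular with unit diagonal, so that $\det A=1$ and $A\in SL(n)$; and let $x_0=(0,\dots,0,-y_0)\in\mathbb{R}^n$. Then $Ax+x_0=(x',\,y-z_0'\cdot x'-y_0)$, and (\ref{L5e}) becomes $f(x)=f^s(Ax+x_0)$ for $\mathcal{L}^n$‑a.e. $x\in\Omega$, which is the assertion. The main obstacle here is only mild: it lies in the ``equivalent super‑level sets for a.e. $t$ $\Longrightarrow$ equal a.e.'' step together with the measurability bookkeeping required to apply Fubini's theorem in $x'$; both become routine once the midpoint identity of Lemma \ref{L2} is in hand, the genuinely delicate work having already been done in Lemmas \ref{L1} and \ref{L2}.
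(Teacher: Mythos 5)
Your proposal is correct and follows essentially the same route as the paper: the same shear $A$ (which the paper writes as the block matrix with rows $(E_{n-1},0)$ and $(-z_0^{\prime},1)$) and the same $x_0=(0,\dots,0,-y_0)$, with the identity of level sets supplied by Lemmas \ref{L1} and \ref{L2} and the passage to the function identity via the layer cake representation. The only cosmetic difference is that you verify the level-set identity fiberwise in $y$ and then invoke Fubini, whereas the paper manipulates the $n$-dimensional level sets $[f]_h$ directly.
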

\begin{proof}
Let
\begin{equation}
A=\left(\begin{matrix}
E_{n-1}&0\\
-z_0^{\prime}&1
\end{matrix}\right)\;\;{\rm and}\;\;
x_0=A\left(\begin{matrix}
0\\
-y_0
\end{matrix}\right),
\end{equation}
where $E_{n-1}$ denotes the $(n-1)\times(n-1)$ unit matrix.

Since the level set
$$[f^s(Ax+x_0)]_h=A^{-1}[f^s]_h-A^{-1}x_0$$
and for any $(x^{\prime},y)\in [f^s]_h$,
$$A^{-1}\left(\begin{matrix}
x^{\prime}\\
y
\end{matrix}\right)-A^{-1}x_0=\left(\begin{matrix}
x^{\prime}\\
z_0^{\prime}x^{\prime}+y_0+y
\end{matrix}\right)=\left(\begin{matrix}
x^{\prime}\\
b(x^{\prime})+y
\end{matrix}\right),$$
we have
\begin{eqnarray}\label{2bb}
[f^s(Ax+x_0)]_h=[f]_h\;\;{\rm for\;every}\;h>0.
\end{eqnarray}

By (\ref{2bb}) and the layer cake representation of a non-negative, real-valued measurable function $f$, we get $f(x)=f^s(Ax+x_0)$ for $\mathcal{L}^n$-a.e. $x\in\Omega$.
\end{proof}

{\noindent\bf Proof of Theorem \ref{2.3}.}
By the affine invariance of $\mathcal{E}_{\phi}(f)$, i.e., Lemma \ref{2dd}, the sufficiency is established.  Now we prove the necessity.
Let $y_1(x^{\prime},t)$ and $y_2(x^{\prime},t)$ be defined as in Lemma \ref{L1}, and let $b$ be the function defined as in Lemma \ref{L2}. Let us set
\begin{eqnarray}\label{5l}
z_1(x^{\prime},t)=b(x^{\prime})-\frac{1}{2}\mu_f(x^{\prime},t),\;\;z_2(x^{\prime},t)=b(x^{\prime})+\frac{1}{2}\mu_f(x^{\prime},t)
\end{eqnarray}
for $(x^{\prime},t)\in \pi_{n-1}(\Omega)\times\mathbb{R}_t^{+}$. Then, by Lemma \ref{4e} and Lemma \ref{L2}, $z_i\in BV_{\rm loc}(\pi_{n-1}(\Omega)\times \mathbb{R}^+_t)$, $i=1,2$. By (\ref{4g}) and (\ref{4o}), for $\mathcal{L}^{n-1}$-a.e. $x^{\prime}\in\pi_{n-1}(\Omega)$ and $\mathcal{L}^1$-a.e. $t\in(0,M_f(x^{\prime}))$, we have
\begin{eqnarray}\label{4p}
\mathcal{L}^1(\{y:\;f(x^{\prime},y)>t\})=y_2(x^{\prime},t)-y_1(x^{\prime},t).
\end{eqnarray}
Thus, by (\ref{5l}), (\ref{4q}) and (\ref{4p}), a set $N\subset\pi_{n-1,n+1}(\mathcal{S}_f)^+$ exists such that $$\mathcal{L}^n(\pi_{n-1,n+1}(\mathcal{S}_f)^+\setminus N)=0\;\;{\rm and}\;\;
z_i(x^{\prime},t)=y_i(x^{\prime},t)$$
for $(x^{\prime},t)\in N$. Therefore, thanks to Lemma \ref{L1}, the set $\mathcal{S}_f$ is equivalent to the set $E$ defined by
\begin{eqnarray}
E=\{(x^{\prime},y,t):(x^{\prime},t)\in \pi_{n-1,n+1}(\mathcal{S}_u)^+,\;z_1(x^{\prime},t)<y<z_2(x^{\prime},t)\}.
\end{eqnarray}
Now, define
$$E_1=\{(x^{\prime},y,t):(x^{\prime},t)\in\pi_{n-1}(\Omega)\times \mathbb{R}_t^+,\;y>z_1(x^{\prime},t)\}$$
$$E_2=\{(x^{\prime},y,t):(x^{\prime},t)\in\pi_{n-1}(\Omega)\times \mathbb{R}_t^+,\;y<z_2(x^{\prime},t)\}.$$
Observe that $E$ is equivalent to $E_1\cap E_2$. By Theorem \ref{2s}, the sets $E$, $E_1$ and $E_2$ are of finite perimeter in $U\times\mathbb{R}_y$ for every bounded open set $U\Subset\pi_{n-1}(\Omega)\times\mathbb{R}_t^+$, and hence, by Theorem \ref{2t}, Borel sets $\Omega_E$, $\Omega_{E_1}$ and $\Omega_{E_2}$ exist such that
$$\mathcal{L}^n(\pi_{n-1,n+1}(E)^+\setminus \Omega_E)=0,\;\;\;\mathcal{L}^n((\pi_{n-1}(\Omega)\times\mathbb{R}_t^+)\setminus \Omega_{E_i})=0,\; \;i=1,2,$$
and (\ref{2u})-(\ref{2v}) hold. In particular,
\begin{eqnarray}\label{4r}
(\partial^{\ast} E)_{x^{\prime},t}=\partial^{\ast}(E_{x^{\prime},t})=\{z_1(x^{\prime},t),z_2(x^{\prime},t)\}\;\;{\rm for\;every}\;(x^{\prime},t)\in \Omega_E
\end{eqnarray}
\begin{eqnarray}
(\partial^{\ast} E_i)_{x^{\prime},t}=\partial^{\ast}(E_i)_{x^{\prime},t}=\{z_i(x^{\prime},t)\}\;\;{\rm for\;every}\;(x^{\prime},t)\in \Omega_{E_i},\;i=1,2.
\end{eqnarray}
By Theorem \ref{2w} and by (\ref{2v}) of Theorem \ref{2t}, a Borel set $S$ exists such that $\mathcal{H}^n(S)=0$ and
\begin{eqnarray}
v^E(x^{\prime},y,t)&=&v^{E_i}(x^{\prime},y,t)\nonumber\\
&&{\rm for}\;(x^{\prime},y,t)\in[(\partial^{\ast}E\cap\partial^{\ast} E_i)\setminus S]\cap [(\Omega_E\cap \Omega_{E_i})\times\mathbb{R}_y].
\end{eqnarray}

We next claim that a subset $R$ of $\pi_{n-1,n+1}(E)^+$ exists such that $\mathcal{L}^n(\pi_{n-1,n+1}(E)^+\setminus R)=0$ and
\begin{equation}\label{4s}
\left\{ \begin{aligned}
&\frac{v^E_i(x^{\prime},z_1(x^{\prime},t),t)}{|v^E_y(x^{\prime},z_1(x^{\prime},t),t)|}
=\frac{v_i^E(x^{\prime},z_2(x^{\prime},t),t)}{|v^E_y(x^{\prime},z_2(x^{\prime},t),t)|}
+z^{\prime}_0,\;\;i=1,\dots,n-1,&&\\
&\frac{v^E_y(x^{\prime},z_1(x^{\prime},t),t)}{v^E_t(x^{\prime},z_1(x^{\prime},t),t)}
=\frac{-v_y^E(x^{\prime},z_2(x^{\prime},t),t)}{v_t^E(x^{\prime},z_2(x^{\prime},t),t)},&&
\end{aligned} \right.
\end{equation}
for $(x^{\prime},t)\in R$, where $z^{\prime}_0\in\mathbb{R}^{n-1}$ is a constant vector.
To verify this claim, recall from Theorem \ref{T1} that, since $f\in W_0^{1,1}(\Omega)$, a subset $V$ of $\partial^{\ast}E\cap(\Omega\times\mathbb{R}^+_t)$ exists such that
\begin{eqnarray}\label{4x}
\mathcal{H}^n([\partial^{\ast}E\cap(\Omega\times\mathbb{R}^+_t)]\setminus V)=0
\end{eqnarray}
and
\begin{eqnarray}\label{4u}
v^E(x^{\prime},y,t)=\left(\frac{\nabla_1f(x^{\prime},y)}{\sqrt{1+|\nabla f|^2}},\cdots,\frac{\nabla_{n-1}f(x^{\prime},y)}{\sqrt{1+|\nabla f|^2}},\frac{\nabla_yf(x^{\prime},y)}{\sqrt{1+|\nabla f|^2}},\frac{-1}{\sqrt{1+|\nabla f|^2}}\right)
\end{eqnarray}
for every $(x^{\prime},y,t)\in V$.
Set $Q=\pi_{n-1,n+1}([\partial^{\ast}E\cap(\Omega\times\mathbb{R}_t)]\setminus V)$. Eq. (\ref{4x}) entails that $\mathcal{L}^n(Q)=0$. It is easy to observe that
\begin{eqnarray}\label{4t}
(x^{\prime},z_i(x^{\prime},t),t)\in V\;\;{\rm for}\;\mathcal{L}^n{\text-}a.e.\;(x^{\prime},t)\in\pi_{n-1,n+1}(E)^+\setminus Q.
\end{eqnarray}
Eqs. (\ref{4s}) follow from (\ref{4t}) and (\ref{4u}) and from (\ref{4i})--(\ref{4j}) of Lemma \ref{L1}.

Finally, from Eq. (\ref{2c}) applied to $z_1$ and $z_2$, and from (\ref{4r}) we deduce that a set $T\subset \pi_{n-1}(\Omega)\times \mathbb{R}_t^+$ exists such that $\mathcal{L}^n((\pi_{n-1}(\Omega)\times \mathbb{R}_t^+)\setminus T)=0$ and
\begin{eqnarray}\label{4v}
&&v^{E_i}(x^{\prime},z_i(x^{\prime},t),t)\nonumber\\
&=&(-1)^i\left(\frac{\nabla_1 z_i(x^{\prime},t)}{\sqrt{1+|\nabla z_i|^2}},\cdots,\frac{\nabla_{n-1} z_i(x^{\prime},t)}{\sqrt{1+|\nabla z_i|^2}},\frac{-1}{\sqrt{1+|\nabla z_i|^2}},\frac{\nabla_t z_i(x^{\prime},t)}{\sqrt{1+|\nabla z_i|^2}}\right),\nonumber\\
&&\;i=1,2,
\end{eqnarray}
for $(x^{\prime},t)\in T$. Now, set
$$Z=[\pi_{n-1,n+1}(E)^+\cap N\cap \Omega_E\cap \Omega_{E_1}\cap \Omega_{E_2}\cap R\cap T]\setminus \pi_{n-1,n+1}(S),$$
and note that $\mathcal{L}^n(\pi_{n-1,n+1}(E)^+\setminus Z)=0$. Combining (\ref{4r})--(\ref{4s}) and (\ref{4v}) we infer that
$$\nabla_{x^{\prime}}z_1(x^{\prime},t)+\nabla_{x^{\prime}}z_2(x^{\prime},t)=-z^{\prime}_0$$
and
$$\nabla_{t}z_1(x^{\prime},t)+\nabla_{t}z_2(x^{\prime},t)=0$$
for $(x^{\prime},t)\in Z$, and hence for $\mathcal{L}^n$-a.e. $(x^{\prime},t)\in\pi_{n-1,n+1}(\mathcal{S}_u)^+$. Consequently,
\begin{eqnarray}\label{4w}
\nabla_{x^{\prime}}b(x^{\prime})=-\frac{1}{2}z^{\prime}_0\;{\rm for}\;\mathcal{L}^{n-1}{\text-}{\rm a.e.}\;\;x^{\prime}\in\pi_{n-1}(\Omega).
\end{eqnarray}
 Thus, since $b\in W_{\rm loc}^{1,1}(\pi_{n-1}(\Omega))$ and satisfies (\ref{4w}), and $\pi_{n-1}(\Omega)$ is assumed to be connected, then a constant $y_0\in\mathbb{R}$ exists such that (see e.g. \cite[Corollary 2.1.9]{Ziemer89})
 \begin{eqnarray}\label{3a}
 b(x^{\prime})=-\frac{1}{2}z^{\prime}_0\cdot x^{\prime}+y_0\;{\rm for }\; \mathcal{L}^{n-1}{\text-}a.e.\; x^{\prime}\in\pi_{n-1}(\Omega).
 \end{eqnarray}
  By Lemma \ref{L5}, the necessity is established.
 \qed

\section{Proof of Theorem \ref{2.4}}\label{s5}

For $u\in S^{n-1}$, let $u^{\perp}$ denote the $n$-dimensional linear subspace orthogonal to $u$ in $\mathbb{R}^n$.  For a Lebesgue measurable set $E\subset\mathbb{R}^n$ and $x^{\prime}\in u^{\perp}$, let
\begin{eqnarray}\label{5.1}
E_{x^{\prime},u}:=\{x^{\prime}+su:\;s\in\mathbb{R}\}\cap E.
\end{eqnarray}
Let $F\subset \mathbb{R}^{n+1}$ denote a bounded Lebesgue measurable set. Let $\pi_u(F)$ denote the orthogonal projection of $F$ onto $u^{\perp}$ and let $\pi_{u,t}(F)$ denote the orthogonal projection of $F$ onto $u^{\perp}\times\mathbb{R}_t$.
Similar to \eqref{5.1}, for $u\in S^{n-1}$ and $(x^{\prime},t)\in u^{\perp}\times\mathbb{R}_t$, let
\begin{eqnarray}
F_{(x^{\prime},t),u}:=\{(x^{\prime},t)+su:\;s\in\mathbb{R}\}\cap F.
\end{eqnarray}

 For $u\in S^{n-1}$ and $K\subset\mathbb{R}^n$, let
$$\pi_u(K)\times \mathbb{R}_u:=\{x^{\prime}+su:x^{\prime}\in\pi_u(K),\;s\in\mathbb{R}\}.$$
For $u\in S^{n-1}$ and $f\in W_0^{1,\Phi}(\Omega)$, let $f_u^s$ denote the Steiner symmetrization of $f$ with respect to $u$.
For fixed $x^{\prime}\in\pi_u(\Omega)^+$, let
$$M_u(x^{\prime}):={\rm ess}\sup\{f(x^{\prime}+su):\;x^{\prime}+su\in\Omega\}$$
and
\begin{eqnarray}\label{5b}
D_{f,u}:&=&\{x^{\prime}+su\in\Omega:\;x^{\prime}\in\pi_u(\Omega),\;\; \nabla_u f(x^{\prime}+su)=0\}\nonumber\\
&&\cap\{x^{\prime}+su\in\Omega:\;x^{\prime}\in\pi_u(\Omega),\;\;M_u(x^{\prime})=0\;{\rm or}\;f(x^{\prime}+su)<M_f(x^{\prime})\}.\nonumber\\
\end{eqnarray}

\begin{lem}\label{5q}
For a bounded Lebesgue measurable set $E\subset\mathbb{R}^n$ let
\begin{eqnarray}\label{5p}
E_1:=\left\{x\in \mathbb{R}^n:\;\lim_{\varepsilon\rightarrow0^+}\frac{\mathcal{L}^n(E\cap C(x,\varepsilon))}{\mathcal{L}^n(C(x,\varepsilon))}=1\right\},
\end{eqnarray}
where $C(x,\varepsilon)$ is a cube centered at $x$ and whose side length is $2\varepsilon$.
 Then
\begin{eqnarray}
\mathcal{L}^n(E_1\triangle E)=0.
\end{eqnarray}
\end{lem}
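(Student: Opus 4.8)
The plan is to derive Lemma \ref{5q} directly from the classical Lebesgue density theorem, the only extra ingredient being the elementary comparison of cubes with Euclidean balls. First I would recall the notation $B_r(x)$ for the ball of radius $r$ centered at $x$, set $\omega_n:=\mathcal{L}^n(B_1(0))$, and introduce the set of density-one points of $E$ with respect to balls,
$$E^{(1)}:=\left\{x\in\mathbb{R}^n:\;\lim_{r\rightarrow0^+}\frac{\mathcal{L}^n(E\cap B_r(x))}{\mathcal{L}^n(B_r(x))}=1\right\}.$$
The Lebesgue density theorem (see, e.g., \cite{EG92}) gives $\mathcal{L}^n(E^{(1)}\triangle E)=0$, so it is enough to prove the set identity $E_1=E^{(1)}$; the conclusion $\mathcal{L}^n(E_1\triangle E)=\mathcal{L}^n(E^{(1)}\triangle E)=0$ then follows at once.

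To establish $E_1=E^{(1)}$ I would use the sandwich $B_{\varepsilon}(x)\subset C(x,\varepsilon)\subset B_{\sqrt{n}\,\varepsilon}(x)$, valid for every $x\in\mathbb{R}^n$ and $\varepsilon>0$, together with $\mathcal{L}^n(C(x,\varepsilon))=(2\varepsilon)^n$ and $\mathcal{L}^n(B_r(x))=\omega_n r^n$, so that $\mathcal{L}^n(C(x,\varepsilon))/\mathcal{L}^n(B_{\varepsilon}(x))=2^n/\omega_n$ and $\mathcal{L}^n(B_{\sqrt{n}\,\varepsilon}(x))/\mathcal{L}^n(C(x,\varepsilon))=\omega_n n^{n/2}/2^n$ are constants depending only on $n$. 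Passing to complements, the inclusion $B_{\varepsilon}(x)\subset C(x,\varepsilon)$ yields
$$\frac{\mathcal{L}^n(B_{\varepsilon}(x)\setminus E)}{\mathcal{L}^n(B_{\varepsilon}(x))}\le\frac{\mathcal{L}^n(C(x,\varepsilon)\setminus E)}{\mathcal{L}^n(C(x,\varepsilon))}\cdot\frac{2^n}{\omega_n},$$
so that $x\in E_1$ forces the right-hand side to tend to $0$ as $\varepsilon\rightarrow0^+$, whence $x\in E^{(1)}$; dually, $C(x,\varepsilon)\subset B_{\sqrt{n}\,\varepsilon}(x)$ yields
$$\frac{\mathcal{L}^n(C(x,\varepsilon)\setminus E)}{\mathcal{L}^n(C(x,\varepsilon))}\le\frac{\mathcal{L}^n(B_{\sqrt{n}\,\varepsilon}(x)\setminus E)}{\mathcal{L}^n(B_{\sqrt{n}\,\varepsilon}(x))}\cdot\frac{\omega_n n^{n/2}}{2^n},$$
so that, since $\sqrt{n}\,\varepsilon\rightarrow0^+$ as $\varepsilon\rightarrow0^+$, $x\in E^{(1)}$ forces the right-hand side to tend to $0$, whence $x\in E_1$. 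In both cases the vanishing of the displayed bound shows that the full limit defining membership in $E_1$, respectively in $E^{(1)}$, exists and equals $1$, so the two sets coincide and the proof is complete.

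The main point, such as it is, lies only in bookkeeping the dimensional constants and in observing that $x\in E_1$ already pins down the limit (not merely the limit superior) of the cube density; there is no genuine obstacle. I would also note in passing that the boundedness of $E$ plays no role in this argument and could be dropped.
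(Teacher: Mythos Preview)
Your proof is correct. Both you and the paper derive the lemma from the Lebesgue density theorem, but the routes differ slightly. The paper invokes the density theorem directly in its cube form, i.e., for any measurable $A$ one has $\mathcal{L}^n(A\setminus\bar A)=0$ where $\bar A:=\{x\in A:\lim_{\varepsilon\to0^+}\mathcal{L}^n(A\cap C(x,\varepsilon))/\mathcal{L}^n(C(x,\varepsilon))=1\}$, and then applies this separately to $E$ and to $E^c$ to control each half of the symmetric difference $E_1\triangle E$. You instead cite the ball version of the density theorem in its two-sided form $\mathcal{L}^n(E^{(1)}\triangle E)=0$ and then prove the set identity $E_1=E^{(1)}$ via the sandwich $B_\varepsilon(x)\subset C(x,\varepsilon)\subset B_{\sqrt n\,\varepsilon}(x)$. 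The paper's route is marginally shorter since the cube version of the density theorem is itself standard and needs no reduction to balls; your route has the minor pedagogical virtue of making explicit why the choice of shape (cube versus ball) is immaterial. Your closing remark that boundedness of $E$ is not used is also correct.
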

\begin{proof}
We will use Lebesgue's density theorem:  If $A$ is a Lebesgue measurable subset of $\mathbb{R}^n$ and
\begin{eqnarray}\label{6h}
\bar{A}:=\left\{x\in A:\;\lim_{\varepsilon\rightarrow0^+}\frac{\mathcal{L}^n(A\cap
C(x,\varepsilon))}{\mathcal{L}^n(C(x,\varepsilon))}=1\right\},
\end{eqnarray}
then $\mathcal{L}^n(A\setminus \bar{A})=0$.

Let $E^c$ denote the complement of $E$. Let $\overline{E^c}$ and $\bar{E}$ be the sets defined as in (\ref{6h}). On the one hand, if $x\in E_1\setminus E$, then $x\in E^c\setminus \overline{E^c}$. Since $\mathcal{L}^n(E^c\setminus \overline{E^c})=0$, $\mathcal{L}^n(E_1\setminus E)=0$. On the other hand, if $x\in E\setminus E_1$, then $x\in E\setminus \bar{E}$. Since $\mathcal{L}^n(E\setminus \bar{E})=0$, $\mathcal{L}^n(E\setminus E_1)=0$. In summary, $\mathcal{L}^n(E_1\triangle E)=0$.
\end{proof}

The following lemma was proved in Lemma 2.2 of the paper \cite{CFNT17}, here we give a different proof.
\begin{lem}\label{5c}
Let $E\subset\mathbb{R}^n$ be a bounded measurable set. If there is a dense set $T$ of directions in $S^{n-1}$ such that for every $u\in T$, for $\mathcal{L}^{n-1}$-a.e. $x^{\prime}\in \pi_u(E)$, $E_{x^{\prime},u}$ is equivalent to a closed line segment, then $E$ is a convex body up to an $\mathcal{L}^n$-negligible set.
\end{lem}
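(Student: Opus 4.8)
The plan is to prove that the set $E_1$ of points of density one of $E$ — the set that appears, for cubes, in Lemma~\ref{5q}, density being computable with balls or with cubes interchangeably — is convex. Granting this, $E_1$ is a bounded convex set of positive measure (we may assume $\mathcal L^n(E)>0$, the statement being trivial otherwise; and $E_1\subset\overline E$), so $K:=\overline{E_1}$ is a convex body with $\mathcal L^n(K\setminus E_1)\le\mathcal L^n(\partial K)=0$; since $\mathcal L^n(E_1\triangle E)=0$ by Lemma~\ref{5q}, this yields $\mathcal L^n(E\triangle K)=0$, which is the assertion.

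The heart of the matter is the claim: \emph{if $x,y\in E_1$ and $z$ lies on the open segment $(x,y)$, then $z\in E_1$.} Fix such $x,y,z$, write $v=(y-x)/|y-x|$ and $z=(1-\theta)x+\theta y$ with $\theta\in(0,1)$, and let $r\downarrow 0$. For each $r$, since $T$ is dense, pick $u=u(r)\in T$ so close to $v$ that the orthogonal projections $\pi_u(x),\pi_u(y),\pi_u(z)$ onto $u^{\perp}$ differ by amounts negligible compared with $r$, while $\langle z-x,u\rangle$ and $\langle y-z,u\rangle$ stay bounded below by a fixed positive constant (possible because these inner products are close to $\theta|y-x|$ and $(1-\theta)|y-x|$). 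Foliate by lines parallel to $u$. For $\mathcal L^{n-1}$-a.e. $x'\in u^{\perp}$ near $\pi_u(z)$ the section $E\cap\ell_{x',u}$, where $\ell_{x',u}=\{x'+su:s\in\mathbb R\}$, is equivalent to a closed interval $[a(x'),b(x')]$; and, $x$ and $y$ being density points of $E$, a standard Fubini–Chebyshev estimate at scale $\sim r$ shows that for all but a fraction $\eta(r)\to0$ of such $x'$ the interval $[a(x'),b(x')]$ contains a neighbourhood of radius $\sim r$ of $\langle z,u\rangle$ (this is where the lower bounds on $\langle z-x,u\rangle$ and $\langle y-z,u\rangle$ enter). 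For each such ``good'' $x'$ one then has $\bigl(B(z,\rho_r)\cap\ell_{x',u}\bigr)\subset E$ up to an $\mathcal L^1$-null set, with $\rho_r\sim r$, whereas each ``bad'' $x'$ contributes at most $\mathcal L^1\bigl(B(z,\rho_r)\cap\ell_{x',u}\bigr)$; integrating over $x'$ gives $\mathcal L^n\bigl(B(z,\rho_r)\setminus E\bigr)\le C_n\,\eta(r)\,\mathcal L^n\bigl(B(z,\rho_r)\bigr)$, so letting $r\to0$ shows that $E$ has density one at $z$, i.e. $z\in E_1$. The claim forces $[x,y]\subset E_1$ whenever $x,y\in E_1$, so $E_1$ is convex, and the first paragraph finishes the proof.

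The one genuine difficulty is that $T$ is only dense in $S^{n-1}$, so one cannot run the one-dimensional reasoning on the single line through $x$ and $y$. The way around it is to let the admissible direction $u\in T$ depend on the scale $r$ and drive it toward $(y-x)/|y-x|$ as $r\to0$, so that replacing the true direction by $u$ only introduces errors negligible at scale $r$. What then requires care is that the two density points $x$ and $y$ are still ``seen'' by the lines in direction $u$ passing near $z$ — this is exactly why one needs $\pi_u(x),\pi_u(y),\pi_u(z)$ to cluster — and that the proportion of lines whose section interval fails to reach both past $x$ and past $y$ tends to $0$ as $r\to0$.
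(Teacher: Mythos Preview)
Your proposal is correct and follows essentially the same route as the paper: both reduce the problem (via Lemma~\ref{5q}) to showing that the set $E_1$ of density-one points is convex, pick $x,y\in E_1$ and a point $z$ on the segment, approximate the direction $(y-x)/|y-x|$ by a direction in the dense set $T$, and use a Fubini/measure-counting argument to conclude. The only differences are cosmetic: the paper argues by contradiction (assuming $z\notin E_1$ and producing a line in the approximating direction whose section fails to be an interval), works with cubes aligned to the approximating direction, and carries explicit constants; you argue directly that $z\in E_1$, work with balls, and package the estimate as a Fubini--Chebyshev bound. Your presentation is arguably a bit cleaner, but the underlying idea---letting the admissible direction depend on the scale and driving it toward $(y-x)/|y-x|$---is identical.
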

\begin{proof}
Let $E_1$ be defined as in (\ref{5p}). By Lemma \ref{5q}, we only need to prove that $E_1$ is a convex set. Suppose that $E_1$ is not  convex, then there exist $x_1,x_2\in E_1$ such that there exists a point $z\in (x_1,x_2)$ but $z\notin E_1$.

 By (\ref{5p}), there exist $0<\varepsilon_0<1$ and a sequence of $\varepsilon_i>0$, $i=1,2,\dots$, such that
\begin{eqnarray}\label{3c}
\lim_{i\rightarrow\infty}\varepsilon_i=0
\end{eqnarray}
and for any $i$,
\begin{eqnarray}\label{3g}
\frac{\mathcal{L}^n(E\cap C(z,\varepsilon_i))}{\mathcal{L}^n(C(z,\varepsilon_i))}\leq 1-\varepsilon_0
\end{eqnarray}
and there exits $i_0$ such that $i\geq i_0$
\begin{eqnarray}\label{3l}
\frac{\mathcal{L}^n(E\cap C(x_k,\varepsilon_{i}))}{\mathcal{L}^n(C(x_k,\varepsilon_{i}))}\geq1-\frac{\varepsilon_0}{12},\;\;k=1,2,
\end{eqnarray}
where $\bar{w}:=\frac{x_2-x_1}{|x_2-x_1|}$ is a common normal vector of one of the $(n-1)$-dimensional faces of  $C(z,\varepsilon_i)$, $C(x_1,\varepsilon_i)$ and $C(x_2,\varepsilon_i),$ and the orthogonal projections of $C(z,\varepsilon_i)$, $C(x_1,\varepsilon_i)$ and $C(x_2,\varepsilon_i)$ onto $\bar{w}^{\perp}$ are same.
\begin{figure}[htb]
\centering
  \includegraphics[height=7cm]{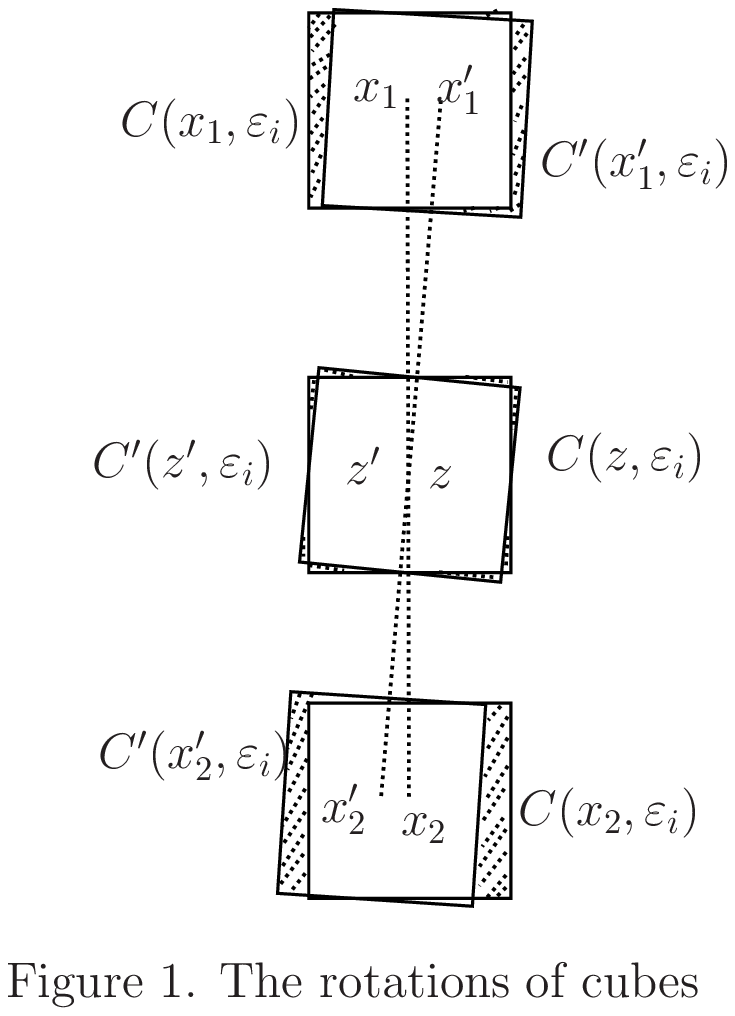}
\end{figure}

For $i\geq i_0$, we aim to rotate   $C(z,\varepsilon_i)$, $C(x_1,\varepsilon_i)$ and $C(x_2,\varepsilon_i)$ around $z,$ such that: $z^{\prime}$, $x_1^{\prime}$ and $x_2^{\prime}$ are collinear, $z^{\prime}=z$ and $w^{\prime}:=\frac{x^{\prime}_2-x^{\prime}_1}{|x^{\prime}_2-x^{\prime}_1|}$ is the common normal vector of the corresponding $(n-1)$-dimensional faces of  $C^{\prime}(z^{\prime},\varepsilon_i)$, $C^{\prime}(x_1^{\prime},\varepsilon_i)$ and $C^{\prime}(x_2^{\prime},\varepsilon_i)$ and the projections of $C^{\prime}(z^{\prime},\varepsilon_i)$, $C^{\prime}(x_1^{\prime},\varepsilon_i)$ and $C^{\prime}(x_2^{\prime},\varepsilon_i)$ onto $(w^{\prime})^{\perp}$ are same (see Figure 1, where dashed areas denote symmetric differences).

Actually, by the  denseness of $T$ and the continuity of  the Lebesgue measure, there exists a rotation $\Psi \in SO(n),$ such that the points
$$x_1' = z+ \Psi(x_1-z),\quad \quad x_2' = z+ \Psi(x_2-z), \quad \quad w^{\prime}:=\frac{x^{\prime}_2-x^{\prime}_1}{|x^{\prime}_2-x^{\prime}_1|}$$
satisfy $\omega'\in T$ and
\begin{eqnarray}\label{5w}
\mathcal{L}^n(C(z,\varepsilon_i)\triangle C^{\prime}(z^{\prime},\varepsilon_i))\leq \frac{\varepsilon_0}{2}\mathcal{L}^n(C^{\prime}(z^{\prime},\varepsilon_i)),
\end{eqnarray}
\begin{eqnarray}\label{5x}
\mathcal{L}^n(C(x_k,\varepsilon_i)\triangle C^{\prime}(x_k^{\prime},\varepsilon_i))\leq \frac{\varepsilon_0}{12}\mathcal{L}^n(C^{\prime}(x^{\prime}_k,\varepsilon_i)),\;\;k=1,2.
\end{eqnarray}
By (\ref{3g}) and (\ref{5w}), we have
\begin{eqnarray}\label{5y}
\mathcal{L}^n(C^{\prime}(z^{\prime},\varepsilon_i)\cap E)&=&\mathcal{L}^n(C^{\prime}(z^{\prime},\varepsilon_i)\cap(C(z,\varepsilon_i)\cup C(z,\varepsilon_i)^c)\cap E)\nonumber\\
&=&\mathcal{L}^n(\left[\left(C^{\prime}(z^{\prime},\varepsilon_i)\cap C(z,\varepsilon_i)\right)\cup\left( C^{\prime}(z^{\prime},\varepsilon_i)\cap C(z,\varepsilon_i)^c\right)\right]\cap E)\nonumber\\
&\leq&\mathcal{L}^n\left(C(z,\varepsilon_i)\cap E\right)+\mathcal{L}^n\left(C^{\prime}(z^{\prime},\varepsilon_i)\cap C(z,\varepsilon_i)^c\right)\nonumber\\
&\leq&\left(1-\frac{\varepsilon_0}{2}\right)\mathcal{L}^n(C^{\prime}(z^{\prime},\varepsilon_i)).
\end{eqnarray}
Thus,
\begin{eqnarray}\label{6a}
\mathcal{L}^n(C^{\prime}(z^{\prime},\varepsilon_i)\setminus E)\geq \frac{\varepsilon_0}{2}\mathcal{L}^n(C^{\prime}(z^{\prime},\varepsilon_i)).
\end{eqnarray}
Similarly, by (\ref{3l}) and (\ref{5x}), for $k=1,2$, we have
\begin{eqnarray}\label{5z}
\mathcal{L}^n(C^{\prime}(x_k^{\prime},\varepsilon_i)\cap E)&=&\mathcal{L}^n(C^{\prime}(x_k^{\prime},\varepsilon_i)\cap(C(x_k,\varepsilon_i)\cup C(x_k,\varepsilon_i)^c)\cap E)\nonumber\\
&=&\mathcal{L}^n(\left[\left(C^{\prime}(x_k^{\prime},\varepsilon_i)\cap C(x_k,\varepsilon_i)\right)\cup\left( C^{\prime}(x_k^{\prime},\varepsilon_i)\cap C(x_k,\varepsilon_i)^c\right)\right]\cap E)\nonumber\\
&=&\mathcal{L}^n\left(C^{\prime}(x_k^{\prime},\varepsilon_i)\cap C(x_k,\varepsilon_i)\cap E\right)+\mathcal{L}^n\left(C^{\prime}(x_k^{\prime},\varepsilon_i)\cap C(x_k,\varepsilon_i)^c\cap E\right)\nonumber\\
&=&\mathcal{L}^n\left(C(x_k,\varepsilon_i)\cap E\right)-\mathcal{L}^n\left(C^{\prime}(x_k^{\prime},\varepsilon_i)^c\cap C(x_k,\varepsilon_i)\cap E\right)\nonumber\\
&&+\mathcal{L}^n\left(C^{\prime}(x_k^{\prime},\varepsilon_i)\cap C(x_k,\varepsilon_i)^c\cap E\right)\nonumber\\
&\geq&\mathcal{L}^n\left(C(x_k,\varepsilon_i)\cap E\right)-\mathcal{L}^n\left((C^{\prime}(x_k^{\prime},\varepsilon_i)\triangle C(x_k,\varepsilon_i))\cap E\right)\nonumber\\
&\geq&\left(1-\frac{\varepsilon_0}{6}\right)\mathcal{L}^n(C^{\prime}(x^{\prime}_k,\varepsilon_i)).
\end{eqnarray}

Let $w^{\prime}\mathbb{R}:=\{rw^{\prime}:r\in\mathbb{R}\}$ and
\begin{eqnarray} 
D_0:=\{x^{\prime}\in\pi_{w^{\prime}}(C^{\prime}(z^{\prime},\varepsilon_i)):\;\mathcal{L}^1\left((x^{\prime}+w^{\prime}\mathbb{R})\cap(C^{\prime}(z^{\prime},\varepsilon_i)\setminus E)\right)>0\}.
\end{eqnarray}
Then $\mathcal{L}^{n-1}(D_0)\geq\frac{\varepsilon_0}{2}\mathcal{L}^{n-1}(\pi_{w^{\prime}}(C^{\prime}(z^{\prime},\varepsilon_i)))$. Otherwise, $\mathcal{L}^n(C^{\prime}(z^{\prime},\varepsilon_i)\setminus E)<\frac{\varepsilon_0}{2}\mathcal{L}^n(C^{\prime}(z^{\prime},\varepsilon_i))$, a contradiction.
Thus, there exists $D_1\subset D_0$ such that $\mathcal{L}^{n-1}(D_1)>0$ and for any $x^{\prime}\in D_1$,
\begin{eqnarray}\label{6c}
\mathcal{L}^1((x^{\prime}+w^{\prime}\mathbb{R})\cap C^{\prime}(x_1^{\prime},\varepsilon_i)\cap E)>0\;\;{\rm and}\;\;\mathcal{L}^1((x^{\prime}+w^{\prime}\mathbb{R})\cap C^{\prime}(x_2^{\prime},\varepsilon_i)\cap E)>0.
 \end{eqnarray}
 Otherwise, if for $\mathcal{L}^{n-1}$-a.e. $x^{\prime}\in D_0$, either $\mathcal{L}^1((x^{\prime}+w^{\prime}\mathbb{R})\cap C^{\prime}(x_1^{\prime},\varepsilon_i)\cap E)=0$ or $\mathcal{L}^1((x^{\prime}+w^{\prime}\mathbb{R})\cap C^{\prime}(x_2^{\prime},\varepsilon_i)\cap E)=0$, then
\begin{eqnarray}
\mathcal{L}^n((C^{\prime}(x_1^{\prime},\varepsilon_1)\cup C^{\prime}(x_2^{\prime},\varepsilon_1))\setminus E)\geq \frac{\varepsilon_0}{2}\mathcal{L}^n(C^{\prime}(x_1^{\prime},\varepsilon_i)).
\end{eqnarray}
Therefore, we have
\begin{eqnarray}
&&\mathcal{L}^n(C^{\prime}(x_1^{\prime},\varepsilon_i)\cap E)+ \mathcal{L}^n(C^{\prime}(x_2^{\prime},\varepsilon_i)\cap E)\nonumber\\
&=&\mathcal{L}^n((C^{\prime}(x_1^{\prime},\varepsilon_i)\cap E)\cup (C^{\prime}(x_2^{\prime},\varepsilon_i)\cap E))\nonumber\\
&=&\mathcal{L}^n((C^{\prime}(x_1^{\prime},\varepsilon_i)\cup C^{\prime}(x_2^{\prime},\varepsilon_i))\cap E)\nonumber\\
&\leq& (2-\frac{\varepsilon_0}{2})\mathcal{L}^n(C^{\prime}(x_1^{\prime},\varepsilon_i)),
\end{eqnarray}
which contradicts
\begin{eqnarray}
\mathcal{L}^n(C^{\prime}(x_1^{\prime},\varepsilon_i)\cap E)+ \mathcal{L}^n(C^{\prime}(x_2^{\prime},\varepsilon_i)\cap E)\geq (2-\frac{\varepsilon_0}{3})\mathcal{L}^n(C^{\prime}(x_1^{\prime},\varepsilon_i)).
\end{eqnarray}

Since (\ref{6a}), (\ref{6c}) and $\mathcal{L}^{n-1}(D_1)>0$ contradict to the assumptions, $E_1$ must be   convex.
\end{proof}

\begin{lem}\label{5e} Let $K\in\mathcal{K}^n$ be a convex body. If there is a dense set $T$ of directions in $S^{n-1}$ such that for each $u\in T$, the midpoints of chords of $K$ parallel to $u$ lie in an affine subspace of $\mathbb{R}^n$, then for any $u\in S^{n-1}$, the midpoints of chords of $K$ parallel to $u$ lie in an affine subspace of $\mathbb{R}^n$.
\end{lem}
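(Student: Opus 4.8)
The plan is to recast the midpoint hypothesis as an affine‑reflection symmetry of $K$ and then push that symmetry to an arbitrary direction by a compactness argument along the directions of $T$. We may assume $K$ has non‑empty interior, since otherwise $K$ lies in a hyperplane $P$ and for \emph{every} $u\in S^{n-1}$ the midpoints of the $u$‑chords of $K$ already lie in $P$. First I would record the reformulation. For $u\in S^{n-1}$ and $x'\in\pi_u(K)$, write $K\cap(x'+\mathbb{R}u)$ for the corresponding chord and let $m_u(x')$ be the coordinate along $u$ of its midpoint, so the midpoint is $x'+m_u(x')u$. The assignment $x'\mapsto x'+m_u(x')u$ is injective and composing it with $\pi_u$ recovers $\pi_u(K)$, so its image spans an affine flat of dimension exactly $n-1$. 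Hence ``the midpoints of the $u$‑chords lie in a (proper) affine subspace'' is equivalent to their lying in a hyperplane $H_u$, equivalently to $m_u$ being an affine function $m_u(x')=c_u\cdot x'+d_u$ with $c_u\in u^\perp$ and $d_u\in\mathbb{R}$. When this holds, the affine map
\[
R_u(z):=z+2\bigl((c_u-u)\cdot z+d_u\bigr)u,\qquad z\in\mathbb{R}^n,
\]
is an involution that moves every point parallel to $u$, has fixed‑point set the hyperplane $H_u=\{z:(c_u-u)\cdot z+d_u=0\}$ (note $c_u-u\neq 0$ because $(c_u-u)\cdot u=-1$), and reflects each $u$‑chord of $K$ through its midpoint; therefore $R_u(K)=K$. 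Conversely, such an $R_u$ yields the midpoint property.

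For an arbitrary $u_0\in S^{n-1}$ I would choose $u_k\in T$ with $u_k\to u_0$ and pass to the limit in $R_{u_k}$. The first thing to check is uniform control of the coefficients: fix $x_0$ and $0<r<R$ with $B(x_0,r)\subseteq K\subseteq B(0,R)$; every midpoint lies in $K$, so $|m_{u_k}|\le R$ on $\pi_{u_k}(K)$, which contains the ball of radius $r$ about $\pi_{u_k}(x_0)$ in $u_k^\perp$, and an affine function bounded by $R$ on a ball of radius $r$ has slope of norm $\le R/r$, hence $|c_{u_k}|\le R/r$ and $|d_{u_k}|\le R+|x_0|R/r$ for all $k$. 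Passing to a subsequence, $c_{u_k}\to c^\ast$ and $d_{u_k}\to d^\ast$, with $c^\ast\perp u_0$ since $c_{u_k}\perp u_k$. From the explicit formula for $R_u$ one then gets $R_{u_k}\to R^\ast$ locally uniformly, where $R^\ast(z)=z+2((c^\ast-u_0)\cdot z+d^\ast)u_0$ is, by the same computation as above, an affine reflection in the direction $u_0$ with fixed‑point set the hyperplane $H:=\{z:(c^\ast-u_0)\cdot z+d^\ast=0\}$; moreover $H$ is a graph over $u_0^\perp$, so it contains no line parallel to $u_0$.

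To conclude: since $R_{u_k}(K)=K$ for all $k$ and $R_{u_k}\to R^\ast$ uniformly on the compact set $K$, we obtain $R^\ast(K)\subseteq K$, and applying the involution $R^\ast$ gives $R^\ast(K)=K$. Now fix $x'\in\pi_{u_0}(K)$ and let $K\cap(x'+\mathbb{R}u_0)=[p,q]$. As $R^\ast(z)-z\in\mathbb{R}u_0$, the line $x'+\mathbb{R}u_0$ is $R^\ast$‑invariant, and $R^\ast(K)=K$ forces $R^\ast$ to carry $\{p,q\}$ onto itself; since $R^\ast$ restricted to this line is a nontrivial affine involution (it meets $H$ in at most one point), it is the reflection about its fixed point, which must be the midpoint $\tfrac12(p+q)$, so $\tfrac12(p+q)\in H$. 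Thus all midpoints of $u_0$‑chords of $K$ lie in the hyperplane $H$, and since $u_0$ was arbitrary the lemma follows. The point I expect to need the most care is the reformulation step — in particular the observation that the midpoint set always spans an $(n-1)$‑dimensional flat, which is what forces ``affine subspace'' to mean a hyperplane and $m_u$ to be affine; the remaining bookkeeping (the vectors $c_{u_k}$ and base hyperplanes $u_k^\perp$ varying with $k$) is absorbed by the uniform bounds together with $u_k\to u_0$. It is worth noting that the one genuinely substantive implication, $R_{u_k}(K)=K\Rightarrow R^\ast(K)=K$, uses only uniform convergence of the maps on the compact set $K$ and the involution property, so it sidesteps any delicate analysis of how chord endpoints depend on the direction.
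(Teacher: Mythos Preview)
Your proof is correct and takes a genuinely different route from the paper's. The paper argues directly on the hyperplane parameters: it writes the midpoint of the $u$-chord through $x\in\mathrm{int}\,K$ as $m(x,u)=x+\tfrac12(\rho(x,u)-\rho(x,-u))u$, notes that $u\mapsto m(x,u)$ is continuous by continuity of the radial function, encodes each hyperplane of midpoints for $u_i\in T$ by a unit normal $v_i\in S^{n-1}$ and offset $\alpha_i$ with $|\alpha_i|\le\sup h_K$, extracts a convergent subsequence $(v_{i_j},\alpha_{i_j})\to(v_0,\alpha_0)$, and passes to the limit in the linear relation $m(x,u_i)\cdot v_i=\alpha_i$. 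Your argument instead recasts the hypothesis as an affine-reflection symmetry $R_u(K)=K$, bounds the reflection coefficients $(c_{u_k},d_{u_k})$ via an interior ball, and passes to a limit reflection $R^\ast$ with $R^\ast(K)=K$, reading off the hyperplane as its fixed-point set. The paper's version is shorter and uses only the obvious compactness of $S^{n-1}\times[-\sup h_K,\sup h_K]$; yours trades the explicit continuity of $m(x,\cdot)$ for the soft fact that a uniform limit of symmetries of a compact set is again a symmetry, and your reformulation step (forcing ``affine subspace'' to be a hyperplane via the $(n-1)$-dimensional span of the midpoint set) makes explicit something the paper leaves implicit. Both are clean; the reflection viewpoint has the mild bonus that it would transfer verbatim to other compact shapes admitting such reflections.
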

\begin{proof}
For fixed $x\in {\rm int}K$, let
$$\rho(x,u):=\max\{r>0:\;x+ru\in K\}$$
 denote the radial function of $K$ with respect to $x$. It is clear that $\rho(x,u)$ is continuous with respect to $u\in S^{n-1}$. Therefore, the midpoint of $(x+u\mathbb{R})\cap K$, denoted by $m(x,u)$, satisfies that
$$m(x,u)=x+\frac{\rho(x,u)-\rho(x,-u)}{2}u$$
and $m(x,u)$ is continuous with respect to $u\in S^{n-1}$. By the denseness of $T$, there exists a sequence of vectors $\{u_i\}_{i=1}^{\infty}\subset T$ such that $\lim_{i\rightarrow \infty}u_i=u_0$. By the assumptions, there exists a sequence of vectors $\{v_i\}_{i=1}^{\infty}\subset T$ and $\alpha_i\in\mathbb{R}$ such that for any $x\in {\rm int}K$,
\begin{eqnarray}\label{5i}
(m(x,u_i),-1)\cdot (v_i,\alpha_i)=0.
\end{eqnarray}
Since $v_i\in S^{n-1}$ and $|\alpha_i|<\sup\{h_K(u):\;u\in S^{n-1}\}$, there exist convergent subsequence $\{v_{i_j}\}$ of $\{v_i\}$ and $\{\alpha_{i_j}\}$ of $\alpha_i$ such that
\begin{eqnarray}\label{5j}
\lim_{j\rightarrow\infty}v_{i_j}=v_0\;\;{\rm and}\;\;\lim_{j\rightarrow\infty}\alpha_{i_j}=\alpha_0.
\end{eqnarray}
By (\ref{5i}) and (\ref{5j}), we have for any $x\in{\rm int}K$,
\begin{eqnarray}
(m(x,u_0),-1)\cdot (v_0,\alpha_0)=0.
\end{eqnarray}
Since $x\in {\rm int}K$ is arbitrary, the midpoints of chords of $K$ parallel to $u_0$ lie in an affine subspace of $\mathbb{R}^n$.
\end{proof}

By a classical characterization of ellipsoids (see \cite[Theorem 10.2.1]{Schneider13}) and Lemma \ref{5e}, we obtain the following theorem of characterization of ellipsoids.
\begin{thm}\label{5f}(Characterization of ellipsoids.)  A convex body $K\in\mathcal{K}^n$ is an ellipsoid if and only if there exists a dense set $T$ of directions in $S^{n-1}$ such that for each $u\in T$, the midpoints of chords of $K$ parallel to $u$ lie in an affine subspace of $\mathbb{R}^n$.
\end{thm}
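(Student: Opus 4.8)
The plan is to derive Theorem~\ref{5f} by feeding Lemma~\ref{5e} into the classical ``midpoint-locus'' characterization of ellipsoids; once Lemma~\ref{5e} is available, almost nothing remains to be done.

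First I would dispose of the implication ``$K$ is an ellipsoid $\Rightarrow$ the condition holds''. If $K$ is an ellipsoid, choose an invertible affine transformation $\Phi$ with $\Phi(K)$ a Euclidean ball. For a ball and any $u\in S^{n-1}$, every chord parallel to $u$ has its midpoint on the hyperplane through the center orthogonal to $u$; since $\Phi^{-1}$ carries segments to segments preserving midpoints and carries hyperplanes to affine hyperplanes, it follows that for \emph{every} $u\in S^{n-1}$ the midpoints of the chords of $K$ parallel to $u$ lie in an affine hyperplane. In particular the stated condition holds with $T=S^{n-1}$, which is dense.

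For the converse, suppose there is a dense set $T\subset S^{n-1}$ such that for each $u\in T$ the midpoints of chords of $K$ parallel to $u$ lie in an affine subspace of $\mathbb{R}^n$. By Lemma~\ref{5e}, the same conclusion then holds for \emph{every} $u\in S^{n-1}$. Next I would note that the affine subspace occurring here is necessarily a hyperplane: for fixed $u$ the map sending a point $x'$ of the relative interior of $\pi_u(K)$ to the midpoint of the chord $(x'+\mathbb{R}u)\cap K$ is injective, and its image projects onto the $(n-1)$-dimensional set $\pi_u(K)$, so the smallest affine subspace containing that midpoint locus has dimension exactly $n-1$. Hence $K$ has the property that, in every direction, the midpoints of the parallel chords are coplanar, and the classical characterization of ellipsoids \cite[Theorem~10.2.1]{Schneider13} forces $K$ to be an ellipsoid.

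Beyond invoking Lemma~\ref{5e}, the argument is pure bookkeeping; the one place that needs an explicit sentence is the reduction from ``affine subspace'' to ``hyperplane,'' which is what makes \cite[Theorem~10.2.1]{Schneider13} directly applicable, together with (if one wishes to be scrupulous) a remark that the cited classical theorem is being quoted in a form valid for all $n\ge 2$, the planar case included, where a ``hyperplane'' is a line and the midpoint locus is then a line segment.
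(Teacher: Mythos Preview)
Your proposal is correct and follows the same route as the paper: the paper's entire argument is the single sentence ``By a classical characterization of ellipsoids (see \cite[Theorem 10.2.1]{Schneider13}) and Lemma~\ref{5e}, we obtain the following theorem,'' and you have simply spelled out what that sentence means. Your added remarks---the explicit treatment of the trivial direction via an affine image of a ball, and the observation that the midpoint locus projects bijectively onto an $(n-1)$-dimensional set so that ``affine subspace'' must mean ``hyperplane''---are exactly the details the paper leaves implicit, and they are all sound.
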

\begin{lem}\label{5g}
Let $E_1,E_2\subset\mathbb{R}^n$ be origin-centered ellipsoids. If there exists a dense set $T$ of directions in $S^{n-1}$ such that for all $u\in T$, the midpoints of chords of $E_1$ and $E_2$ parallel to $u$ lie in the same hyperplane, then there exists $r>0$ such that
$$E_1=rE_2.$$
\end{lem}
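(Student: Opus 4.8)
The plan is to pass from the synthetic hypothesis to linear algebra. Write $E_i=\{x\in\mathbb{R}^n:\langle M_ix,x\rangle\le 1\}$ with $M_1,M_2$ symmetric positive definite. First I would record the classical description of the locus of midpoints: for $u\in S^{n-1}$ and $p\in{\rm int}\,E_i$, the chord $(p+\mathbb{R}u)\cap E_i$ has midpoint $m=p-\frac{\langle M_iu,p\rangle}{\langle M_iu,u\rangle}\,u$, and $\langle M_iu,m\rangle=0$ by symmetry of $M_i$; conversely, for every $m$ in the open set $(M_iu)^\perp\cap{\rm int}\,E_i$ the chord through $m$ in direction $u$ has midpoint exactly $m$. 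Hence the midpoints of the chords of $E_i$ parallel to $u$ fill an $(n-1)$-dimensional open subset of the linear hyperplane $(M_iu)^\perp$, and in particular they affinely span $(M_iu)^\perp$.

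Next, for each $u\in T$ the hypothesis that the midpoints of $E_1$ and those of $E_2$ lie in one common hyperplane $H_u$ forces $H_u=(M_1u)^\perp=(M_2u)^\perp$, i.e. $M_1u\parallel M_2u$. Setting $A:=M_1^{-1}M_2$, this reads $Au=\lambda(u)u$ for all $u\in T$, with $\lambda(u)>0$ since $\langle M_2u,u\rangle=\lambda(u)\langle M_1u,u\rangle$ and both forms are positive; thus every $u\in T$ is an eigenvector of $A$. Moreover $A$ is diagonalizable with positive real eigenvalues, being conjugate (via $M_1^{1/2}$) to the symmetric positive definite matrix $M_1^{-1/2}M_2M_1^{-1/2}$.

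It remains to use the denseness of $T$. If $A$ had at least two distinct eigenvalues, then the set of its unit eigenvectors would be contained in the union of the proper subspaces $\ker(A-\lambda I)$, $\lambda$ ranging over the finitely many eigenvalues of $A$ --- a closed, nowhere dense subset of $S^{n-1}$ --- which is incompatible with $T\subset S^{n-1}$ being dense. Hence $A=cI$ for some $c>0$, i.e. $M_2=cM_1$, so $E_2=\{x:\langle M_1x,x\rangle\le 1/c\}=c^{-1/2}E_1$ and the conclusion follows with $r=\sqrt{c}$. I expect the only genuinely delicate step to be the first one: one must check that the midpoint locus is exactly a spanning open subset of the conjugate hyperplane $(M_iu)^\perp$, since this is what upgrades the qualitative assumption ``contained in the same hyperplane'' to the sharp identity $(M_1u)^\perp=(M_2u)^\perp$; after that the argument is elementary linear algebra together with the Baire-type observation above.
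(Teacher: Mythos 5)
Your proposal is correct, and every step checks out: the midpoint of the chord of $E_i=\{x:\langle M_ix,x\rangle\le 1\}$ through $p$ in direction $u$ is indeed $p-\frac{\langle M_iu,p\rangle}{\langle M_iu,u\rangle}u$, the midpoint locus is exactly $(M_iu)^\perp\cap{\rm int}\,E_i$ (an open, hence affinely spanning, subset of the conjugate hyperplane through the origin), so the common hyperplane forces $(M_1u)^\perp=(M_2u)^\perp$ and $M_1^{-1}M_2u=\lambda(u)u$ for every $u\in T$; and since $M_1^{-1}M_2$ is conjugate to the symmetric positive definite matrix $M_1^{-1/2}M_2M_1^{-1/2}$, having a dense set of unit eigenvectors forces it to be a positive multiple of the identity, because otherwise its eigenvectors lie in a finite union of proper subspaces, which is nowhere dense in $S^{n-1}$.

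However, your route is genuinely different from the paper's. The paper does not compute with quadratic forms at all: it first upgrades the hypothesis from the dense set $T$ to \emph{every} direction $u_0\in S^{n-1}$ by taking $u_i\to u_0$ with $u_i\in T$ and using the continuity of the midpoint map $u\mapsto m(x,u)$ together with a compactness argument for the normal vectors and offsets $(v_i,\alpha_i)$ of the hyperplanes (exactly the mechanism of its Lemma \ref{5e}); it then invokes Lemma 5.3 of \cite{LYJ17} as a black box to conclude $E_1=rE_2$ once the midpoint condition holds for all directions. Your argument dispenses with both the limiting step and the external citation: the conjugate-hyperplane identification lets you work directly with the dense set $T$, pushing the density into the purely linear-algebraic observation that the eigenvectors of a non-scalar diagonalizable matrix cannot be dense. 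This buys a self-contained and arguably cleaner proof; the paper's approach buys brevity by reusing machinery (continuity of midpoints, the characterization from the earlier paper) that it needs elsewhere anyway. One cosmetic remark: since both ellipsoids are origin-centered, the chord through the origin has midpoint the origin, so the common hyperplane is automatically linear rather than merely affine, which is implicitly used when you identify it with $(M_iu)^\perp$; it would be worth stating this explicitly.
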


\begin{proof}
First, we prove that for any $u_0\in S^{n-1}$, the midpoints of chords of $E_1$ and $E_2$ parallel to $u_0$ lie in a hyperplane. Since $T$ is dense in $S^{n-1}$, there exists a sequence of vectors $\{u_i\}_{i=1}^{\infty}\subset T$ such that $\lim_{i\rightarrow \infty}u_i=u_0$. By assumptions and the proof of Lemma \ref{5e}, there exist $v_0$ and $\alpha_0$ such that for any
$$x\in\{z\in\mathbb{R}^n:\;z\;{\rm is\;the\;midpoint\;of\;chord\;of}\;E_i\;{\rm parallel\;to}\;u_0,\;i=1,2\},$$
\begin{eqnarray}
(x,-1)\cdot(v_0,\alpha_0)=0.
\end{eqnarray}
Thus, the midpoints of chords of $E_1$ and $E_2$ parallel to $u_0$ lie in a hyperplane.
By the arbitrariness of $u_0\in S^{n-1}$ and \cite[Lemma 5.3]{LYJ17},  there exists $r>0$ such that $E_1=rE_2$.
\end{proof}

\begin{lem}\label{5h}\label{16b}
Let $\Omega$ be a bounded connected open subset of $\mathbb{R}^n$. Let $f\in W_0^{1,\Phi}(\Omega)$ be a nonnegative function fulfilling (\ref{2z}). Then there exist $A\in SL(n)$ and $x_0\in\mathbb{R}^n$ such that
\begin{eqnarray}
f(x)=f^{\star}(Ax+x_0)\;\;for\;\mathcal{L}^{n}{\text-}a.e.\;x\in\Omega
\end{eqnarray}
if and only if there exists a dense set $T$ of directions in $S^{n-1}$ and for any $u\in T$, the following statements hold:

(i). for $\mathcal{L}^n$-a.e. $(x^{\prime},t)\in\pi_{u,t}(\mathcal{S}_f)^+$, $(\mathcal{S}_f)_{(x^{\prime},t),u}$ is equivalent to a closed line segment;

(ii). the midpoints of all closed line segments obtained in (i) lie in an affine subspace of $\mathbb{R}^{n+1}$ parallel to $e_{n+1}$.
\end{lem}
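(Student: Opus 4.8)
The plan is to prove the two implications separately; the ``only if'' part is elementary, and the work is concentrated in the ``if'' part.

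\emph{Necessity.} Suppose $f(x)=f^{\star}(Ax+x_0)$ for $\mathcal L^n$-a.e.\ $x$, with $A\in SL(n)$ and $x_0\in\mathbb{R}^n$. Since $[f^{\star}]_t=B_{r(t)}(o)$ with $r(t)>0$ whenever $0<t<{\rm ess}\sup f$, the level set $[f]_t=r(t)A^{-1}B_1(o)-A^{-1}x_0$ is, for every such $t$, a nondegenerate ellipsoid with centre $c:=-A^{-1}x_0$ and with a shape independent of $t$ (the $[f]_t$ are concentric homothetic copies of the fixed ellipsoid $A^{-1}B_1(o)$). Take $T=S^{n-1}$. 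Property (i) holds because the intersection of an ellipsoid with a line is a closed segment, nondegenerate exactly over $\pi_{u,t}(\mathcal S_f)^+$. For (ii), identifying $(\mathcal S_f)_{(x',t),u}$ with the chord of $[f]_t$ over $x'$ in direction $u$, its midpoint lies in the slice of $[f]_t$ by the hyperplane through $c$ conjugate to $u$; as the shape of $[f]_t$ does not depend on $t$, this conjugate hyperplane $H^u$ is the same for all $t$, so all midpoints lie in $H^u\times\mathbb{R}_t$, an affine subspace of $\mathbb{R}^{n+1}$ parallel to $e_{n+1}$.

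\emph{Sufficiency, Steps 1--2.} Assume $T$ is dense and (i)--(ii) hold, and fix a countable dense $T_0\subset T$. Identifying $(\mathcal S_f)_{(x',t),u}=([f]_t)_{x',u}$ and $\pi_{u,t}(\mathcal S_f)^+$ with $\{(x',t):\mathcal L^1(([f]_t)_{x',u})>0\}$, Fubini's theorem turns (i), for each $u\in T_0$, into: for $\mathcal L^1$-a.e.\ $t$, for $\mathcal L^{n-1}$-a.e.\ $x'\in\pi_u([f]_t)^+$, the slice $([f]_t)_{x',u}$ is equivalent to a closed segment. Removing the union (still null) of the exceptional sets of $t$ over $u\in T_0$ and invoking Lemma~\ref{5c}, we obtain that for a.e.\ $t$ the set $[f]_t$ is equivalent to a convex body $K_t$; by monotonicity of $t\mapsto[f]_t$ the $K_t$ can be taken nested, and passing to the layer-cake representative $x\mapsto\mathcal L^1(\{t:x\in K_t\})$ (which equals $f$ a.e.) we may assume $[f]_t$ is, for every $t$, equivalent to the nested convex body $K_t$. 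For each $u\in T_0$, condition (ii) furnishes a proper affine subspace $L_u\subset\mathbb{R}^n$ (the $\mathbb{R}_t$-fibre of the affine subspace of $\mathbb{R}^{n+1}$) such that for a.e.\ $t$ the midpoints of the chords of $K_t$ parallel to $u$ lie in $L_u$; since these midpoints form an $(n-1)$-dimensional set, $L_u$ is a hyperplane. Fixing a generic $t$ and applying Theorem~\ref{5f} to $K_t$, we conclude that for a.e.\ $t$ the body $K_t$ is an ellipsoid.

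\emph{Sufficiency, Step 3 (rigidity).} For a.e.\ $t$ write $K_t$ as the ellipsoid with centre $c_t$ and shape matrix $Q_t$ (symmetric positive definite), so that the midpoints of the chords of $K_t$ parallel to $u$ fill the slice of $K_t$ by the hyperplane through $c_t$ with normal $Q_tu$; since this slice lies in the hyperplane $L_u$, we get $c_t+(Q_tu)^{\perp}=L_u$ for every $u\in T_0$ and a.e.\ $t$. Hence the linear hyperplane $(Q_tu)^{\perp}$ is independent of $t$, i.e.\ the direction of $Q_tu$ depends only on $u$; therefore $Q_tu\parallel Q_{t'}u$ for all $u\in T_0$, and by continuity for all $u\in\mathbb{R}^n$, which forces $Q_t$ and $Q_{t'}$ to be proportional. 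Thus all $K_t$ are translates of positive multiples of a single fixed ellipsoid $E$ — the same conclusion also following from Lemma~\ref{5g} applied to the origin-centred ellipsoids $K_{t}-c_{t}$ and $K_{t'}-c_{t'}$. Moreover $c_t\in L_u$ for every $u\in T_0$ and a.e.\ $t$; choosing $u_1,\dots,u_n\in T_0$ whose associated normals $Q_{t}u_1,\dots,Q_{t}u_n$ (for one fixed $t$) are linearly independent (possible because $\{Q_tu/|Q_tu|:u\in T_0\}$ is dense in $S^{n-1}$), the intersection $\bigcap_{i=1}^nL_{u_i}$ is a single point $c_0$, so $c_t=c_0$ for a.e.\ $t$. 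Consequently $[f]_t=c_0+\rho(t)E$ for a.e.\ $t$, with $\rho(t)>0$ determined by $\mathcal L^n([f]_t)=\mathcal L^n(\{f>t\})$.

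\emph{Conclusion and main obstacle.} Pick $A\in SL(n)$ with $A^{-1}B_1(o)$ a positive multiple of $E$ (every ellipsoid of the relevant volume arises this way) and set $x_0=-Ac_0$. Then $[f^{\star}(A\,\cdot\,+x_0)]_t=A^{-1}[f^{\star}]_t-A^{-1}x_0=c_0+\rho(t)E=[f]_t$ for every $t$, because $f$ and $f^{\star}$ have the same distribution function and $f^{\star}$ is the unique nonincreasing radial function centred at $o$ with that distribution; the layer-cake representation then yields $f(x)=f^{\star}(Ax+x_0)$ for $\mathcal L^n$-a.e.\ $x\in\Omega$. The standing hypothesis (\ref{2z}), together with the connectedness of $\Omega$, is used to rule out degenerate ``plateau'' behaviour, ensuring that $\rho$ is well defined and that the level sets are faithfully represented by their convex (hence ellipsoidal) counterparts. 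The main obstacle is Step~3: extracting from (ii) both the proportionality of the shape matrices $Q_t$ and the coincidence of the centres $c_t$, while keeping careful track of the null sets of parameters $t$ and of directions $u$ along which the various almost-everywhere statements may fail, and verifying that the subspaces $L_u$ are genuinely hyperplanes so that Theorem~\ref{5f} and Lemma~\ref{5g} may be applied.
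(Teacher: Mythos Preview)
Your proof is correct and follows essentially the same route as the paper's: apply Lemma~\ref{5c} to get that level sets are convex, then Theorem~\ref{5f} to get that they are ellipsoids, then show they are concentric and homothetic, and finish with the layer-cake representation. The paper's argument is terser---it simply invokes Lemma~\ref{5g} for the ``homothetic with common centre'' step---whereas you spell out explicitly (via $c_t\in L_u$ for all $u\in T_0$ and an intersection of $n$ such hyperplanes) why the centres coincide, and you give in addition a direct linear-algebra argument ($Q_tu\parallel Q_{t'}u$ for all $u$ forces $Q_t\propto Q_{t'}$) as an alternative to Lemma~\ref{5g}. Your passage to a countable dense $T_0\subset T$ to control the null sets via Fubini is a clean device not made explicit in the paper. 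The remark about hypothesis~(\ref{2z}) and connectedness is harmless but not actually needed in this lemma's proof; the paper does not invoke them here either.
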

\begin{proof}
The necessity of the conditions is clear. Now we prove the sufficiency. On the one hand, by (i) and  Lemma \ref{5c}, the level set $[f]_h$ is a convex body up to an $\mathcal{L}^n$-negligible set for $\mathcal{L}^1$-a.e. $h\in (0,{\rm ess}\sup f)$.  Therefore, by (ii), the arbitrariness of $u\in T$ and  Theorem \ref{5f},  the level set $[f]_h$ is an ellipsoid up to an $\mathcal{L}^n$-negligible set for $\mathcal{L}^1$-a.e. $h\in (0,{\rm ess}\sup f)$. By (ii), the arbitrariness of $u\in T$ and  Lemma \ref{5g}, the level sets $[f]_{h_1}$ and $[f]_{h_2}$ are homothetic ellipsoids with a common center up to an $\mathcal{L}^n$-negligible set for $\mathcal{L}^1$-a.e. $h_1,h_2\in (0,{\rm ess}\sup f)$.

Therefore, there exist $A\in SL(n)$ and $x_0\in\mathbb{R}^n$ such that for $\mathcal{L}^1$-a.e. $h\in (0,{\rm ess}\sup f)$,
\begin{eqnarray}\label{8d}
[f]_h=\left(\frac{|[f]_h|}{\omega_n}\right)^{\frac{1}{n}}A^{-1}B_n-A^{-1}x_0,
\end{eqnarray}
up to an $\mathcal{L}^n$-negligible set.

On the other hand, by the definition of  Schwarz spherical symmetrization, for $h\in(0,{\rm ess}\sup f)$, $[f^{\ast}(Ax+x_0)]_h$ is also an ellipsoid and
\begin{eqnarray}\label{8e}
[f^{\ast}(Ax+x_0)]_h=\left(\frac{|[f]_h|}{\omega_n}\right)^{\frac{1}{n}}A^{-1}B_n-A^{-1}x_0.
\end{eqnarray}
By (\ref{8d}), (\ref{8e}) and the layer cake representation of a non-negative, real-valued measurable function $f$, we get $f(x)=f^{\star}(Ax+x_0)$ for $\mathcal{L}^n$-a.e. $x\in\Omega$.
\end{proof}

\begin{lem}\label{7d}
Let $D\subset \mathbb{R}^n$ be a bounded measurable set satisfying $\mathcal{L}^n(D)>0$. If there exists  an uncountable subset $I$ of $\mathbb{R}$ such that $A_i\subset D$ and $\mathcal{L}^n(A_i)>0$ for every $i\in I$,  then there exist $i,j\in I$ and $i\neq j$ such that $\mathcal{L}^n(A_{i}\cap A_j)>0$.
\end{lem}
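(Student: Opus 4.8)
The plan is to argue by contradiction via a pigeonhole argument on the measures of the $A_i$. I would suppose that $\mathcal{L}^n(A_i\cap A_j)=0$ for every pair of distinct indices $i,j\in I$, and derive that $\mathcal{L}^n(D)=\infty$, which contradicts the boundedness of $D$.

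First I would stratify the index set by the size of the corresponding set: for each $k\in\mathbb{N}$ set
$$I_k=\left\{i\in I:\;\mathcal{L}^n(A_i)\geq\tfrac1k\right\}.$$
Since $\mathcal{L}^n(A_i)>0$ for every $i\in I$, we have $I=\bigcup_{k\geq1}I_k$; because an uncountable set cannot be written as a countable union of finite sets, some $I_{k_0}$ is infinite, and I would pick pairwise distinct indices $i_1,i_2,\dots\in I_{k_0}$.

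Next I would disjointify: put $B_1=A_{i_1}$ and $B_m=A_{i_m}\setminus\bigcup_{l<m}A_{i_l}$ for $m\geq2$. The sets $B_m$ are pairwise disjoint with $B_m\subset A_{i_m}\subset D$, and by subadditivity together with the assumed nullity of all pairwise intersections,
$$\mathcal{L}^n\!\left(A_{i_m}\cap\bigcup_{l<m}A_{i_l}\right)\leq\sum_{l<m}\mathcal{L}^n(A_{i_m}\cap A_{i_l})=0,$$
so $\mathcal{L}^n(B_m)=\mathcal{L}^n(A_{i_m})\geq1/k_0$. Hence for every $N\in\mathbb{N}$,
$$\frac{N}{k_0}\leq\sum_{m=1}^{N}\mathcal{L}^n(B_m)=\mathcal{L}^n\!\left(\bigcup_{m=1}^{N}B_m\right)\leq\mathcal{L}^n(D),$$
and letting $N\to\infty$ yields $\mathcal{L}^n(D)=\infty$, contradicting $\mathcal{L}^n(D)<\infty$, which holds since $D$ is bounded. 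Thus distinct indices $i,j\in I$ with $\mathcal{L}^n(A_i\cap A_j)>0$ must exist.

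There is no serious obstacle in this argument; the only two points deserving a word of justification are that an uncountable family cannot be covered by countably many finite subfamilies, so that one $I_{k_0}$ is infinite, and that the disjointification preserves measure, $\mathcal{L}^n(B_m)=\mathcal{L}^n(A_{i_m})$, precisely because all the pairwise overlaps are Lebesgue-null.
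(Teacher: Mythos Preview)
Your proof is correct and follows essentially the same pigeonhole argument as the paper: both stratify $I$ according to a lower bound on $\mathcal{L}^n(A_i)$ and use the finiteness of $\mathcal{L}^n(D)$ together with pairwise null overlaps to force each stratum to be finite (equivalently, to derive $\mathcal{L}^n(D)=\infty$ from an infinite stratum), contradicting the uncountability of $I$. Your version is in fact more explicit than the paper's, which simply asserts that each stratum $\{i:\mathcal{L}^n(A_i)>2^{-k}\}$ is finite without spelling out the disjointification step.
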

\begin{proof} Suppose $\mathcal{L}^n(A_{i}\cap A_j)=0$ for any $i,j\in I$ and $i\neq j$. On the one hand, since $\mathcal{L}^n(D)$ is finite, for any positive integer $k$, $\{i\in I:\;\mathcal{L}^n(A_{i})>\frac{1}{2^k}\}$ is finite. On the other hand, for any  $i\in I$, there exists a positive integer $k_0$ such that $\mathcal{L}^n(A_i)\geq \frac{1}{2^{k_0}}$. Therefore, $I$ is countable,  a contradiction.
\end{proof}

\begin{lem}\label{7g}
Let $(X,\Sigma,\mu)$ be a  measure space with $\mu(X)<\infty$. Let $D_u\in \Sigma$ be  a  measurable subset of $X$ for every $u\in S^{n-1}$. If there exists a Borel set $S\subset S^{n-1}$ such that $\mathcal{H}^{n-1}(S)>0$ and  $\mu(D_u)>0$ for every $u\in S$, then there exist $n$ linearly independent vectors $u_1,u_2,\dots,u_n\in S$ such that
\begin{eqnarray}\label{6d}
\mu\left(\bigcap_{i=1}^{n}D_{u_i}\right)>0.
\end{eqnarray}
\end{lem}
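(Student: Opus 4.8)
The plan is to argue by induction on the number of directions already selected, carrying along a strengthened inductive hypothesis, with the finiteness of $\mu$ serving as the engine in the same way it does in Lemma \ref{7d}.

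I would first record the elementary fact that a proper linear subspace $V\subsetneq\mathbb{R}^n$ meets $S^{n-1}$ in a set of $\mathcal{H}^{n-1}$-measure zero, so every subset of $S^{n-1}$ of positive $\mathcal{H}^{n-1}$-measure fails to be contained in such a $V$. For $0\le k\le n-1$ consider the claim $H_k$: there are linearly independent $u_1,\dots,u_k\in S$ such that, writing $E_k:=\bigcap_{i=1}^{k}D_{u_i}$ with the convention $E_0:=X$, the set
$$S_k:=\{u\in S:\ \mu(D_u\cap E_k)>0\}$$
has $\mathcal{H}^{n-1}(S_k)>0$. The base case $H_0$ holds at once, since $S_0=\{u\in S:\mu(D_u)>0\}=S$ by hypothesis and $\mathcal{H}^{n-1}(S)>0$.

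The heart of the matter is the step $H_k\Rightarrow H_{k+1}$ for $0\le k\le n-2$. It suffices to produce $u_{k+1}\in S_k$ with $u_{k+1}\notin{\rm span}(u_1,\dots,u_k)$ and $\mathcal{H}^{n-1}\big(\{v\in S:\mu(D_v\cap D_{u_{k+1}}\cap E_k)>0\}\big)>0$, for then $H_{k+1}$ holds with $E_{k+1}=D_{u_{k+1}}\cap E_k$. Suppose for contradiction that no such $u_{k+1}$ exists. Since ${\rm span}(u_1,\dots,u_k)$ is a proper subspace, the set $S_k':=S_k\setminus{\rm span}(u_1,\dots,u_k)$ still has $\mathcal{H}^{n-1}(S_k')>0$; writing $S_k'=\bigcup_{m\ge1}\{u\in S_k':\mu(D_u\cap E_k)>1/m\}$ and using countable subadditivity, fix $m_0$ with $\mathcal{H}^{n-1}(P)>0$ where $P:=\{u\in S_k':\mu(D_u\cap E_k)>1/m_0\}$. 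Choose $J\subseteq P$ of maximal cardinality subject to $\mu(D_u\cap D_{u'}\cap E_k)=0$ for all distinct $u,u'\in J$; the cardinality of any such family is bounded, because the sets $D_u\cap E_k$ $(u\in J)$ are then pairwise $\mu$-essentially disjoint with $\mu$-measure exceeding $1/m_0$, whence $|J|/m_0<\sum_{u\in J}\mu(D_u\cap E_k)\le\mu(X)<\infty$, so a maximizing $J$ exists and is finite. By maximality, every $u\in P$ satisfies $\mu(D_u\cap D_{u'}\cap E_k)>0$ for some $u'\in J$ (take $u'=u$ when $u\in J$), hence
$$P\subseteq\bigcup_{u'\in J}\big\{v\in S:\mu(D_v\cap D_{u'}\cap E_k)>0\big\}.$$
Each $u'\in J\subseteq S_k'$, so by the assumed nonexistence of a good $u_{k+1}$ every set on the right has $\mathcal{H}^{n-1}$-measure zero; since $J$ is finite this forces $\mathcal{H}^{n-1}(P)=0$, a contradiction. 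Thus $H_{k+1}$ holds.

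Finally, to pass from $H_{n-1}$ to the conclusion: since $S_{n-1}$ has positive $\mathcal{H}^{n-1}$-measure while the hyperplane ${\rm span}(u_1,\dots,u_{n-1})$ meets $S^{n-1}$ in a null set, there is $u_n\in S_{n-1}$ lying off that hyperplane, so $u_1,\dots,u_n$ are linearly independent and $\mu\big(\bigcap_{i=1}^{n}D_{u_i}\big)=\mu(D_{u_n}\cap E_{n-1})>0$ because $u_n\in S_{n-1}$. I expect the main obstacle to be identifying the right inductive invariant: one must keep not merely a good tuple but a whole positive-$\mathcal{H}^{n-1}$-measure reservoir $S_k$ of directions compatible with it, and the counting argument above — the only place $\mu(X)<\infty$ is used — is precisely what guarantees the reservoir survives each extension. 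A minor technical point is that the auxiliary sets of directions need not be Borel; this is harmless, since $\mathcal{H}^{n-1}$ is an outer measure and only its countable subadditivity and its vanishing on lower-dimensional subspheres are invoked.
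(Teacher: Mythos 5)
Your proof is correct, and it takes a genuinely different route from the paper's. The paper inducts on the \emph{dimension}: it proves that a positive-$\mathcal{H}^{k-1}$-measure subset of a $k$-dimensional subsphere already contains $k$ independent good directions, descending via a Fubini/coarea-type slicing of the sphere (``generalized spherical coordinates'') to find uncountably many directions $v$ whose orthogonal slices $S\cap v^{\perp}$ retain positive measure, and then applies the countability principle of Lemma \ref{7d} twice --- once in the base case $k=2$ and once over the uncountable family of slicing directions to merge two slices into $m+1$ independent vectors. You instead induct on the \emph{number of vectors already chosen}, carrying as invariant a positive-$\mathcal{H}^{n-1}$-measure reservoir $S_k$ of directions compatible with the current intersection $E_k$; the finiteness of $\mu$ enters through a finite maximal packing of essentially disjoint sets $D_u\cap E_k$ of measure $>1/m_0$ rather than through uncountability, and linear independence is secured simply by discarding the $\mathcal{H}^{n-1}$-null subsphere ${\rm span}(u_1,\dots,u_k)\cap S^{n-1}$. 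Your argument buys a cleaner foundation: it avoids the spherical slicing step, which is the least rigorous point of the paper's proof (one must know that positively many slices of a positive-measure subset of $S^m$ have positive $\mathcal{H}^{m-1}$-measure, together with the attendant measurability of slices), and as you note it needs only countable subadditivity of the outer measure $\mathcal{H}^{n-1}$ and the vanishing of $\mathcal{H}^{n-1}$ on proper subspheres. The paper's approach, in exchange, yields the slightly more general intermediate statement about arbitrary subspheres. Both proofs use $\mu(X)<\infty$ in an essential and analogous way.
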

\begin{proof}
Step 1: Since $\mu$ is a finite measure, we observe the basic property in measure theory that  any index set $I$ satisfying $\mu(D_i)>0$ and
\[\mu(D_i\cap D_j)=0, ~ \forall i,j\in I, i\neq j,\]
is countable.

Step 2: We will prove a more general result by induction: for $k=1,\ldots,n$ and $H_k$ is a $k$-dimensional linear subspace of $\mathbb{R}^n$, if
\begin{eqnarray}
S_k\subset S^{n-1}\cap H_k \;\;{\rm and}\;\;\mathcal{H}^{k-1}(S_k)>0,
 \end{eqnarray}
 and
 \begin{eqnarray}
 \mu(D_{u})>0\;\;{\rm for\;every}\;u\in S_k,
 \end{eqnarray}
 then there are $k$ linearly independent vector $u_1,u_2,\ldots,u_k\in S_k$ such that
 \begin{eqnarray}\label{6i}
\mu\left(\bigcap_{i=1}^{k}D_{u_i}\right)>0.
\end{eqnarray}
If $k=1$, this conclusion is clear. Suppose $k=2$. $\mathcal{H}^1(S_2)>0$ implies that $S_2$ is uncountable. By Step 1, we can find two linearly independent vectors $u^2_1,u^2_2\in S_2$ such that
\begin{eqnarray}
\mu\left(\bigcap_{i=1}^{2}D_{u^2_i}\right)>0.
\end{eqnarray}
Assume that (\ref{6i}) is established for $k=m$, and $m\in \{2,\ldots,n-1\}$, i.e. there exist $m$ linearly independent vectors $u^m_1,\ldots,u^m_{m}\in S_m$ such that
\begin{eqnarray}\label{6j}
\mu\left(\bigcap_{i=1}^{m}D_{u^m_i}\right)>0.
\end{eqnarray}

Next, we consider the case $k=m+1$. Since $S_{m+1}\subset S^{n-1}\cap H_{m+1}$ and $\mathcal{H}^{m}(S_{m+1})>0$, for any $2$-dimensional linear  subspace $\bar{H}_2$ of $H_{m+1}$, there exists a set $\delta\subset S^{n-1}\cap \bar{H}_2$ such that $\mathcal{H}^1(\delta)>0$ and
\begin{eqnarray}
\mathcal{H}^{m-1}(S_{m+1}\cap v^{\perp})>0\;\;{\rm for\;every}\;v\in\delta.
\end{eqnarray}
The subset $\delta$ can be obtained by using the generalized spherical coordinates to compute $\mathcal{H}^{m}(S_{m+1})$.

By the assumption (\ref{6j}), for any $v\in\delta$, there exist $m$ linearly independent vectors $u^m_{v,1},\ldots,u^m_{v,m}\in S_{m+1}\cap v^{\perp}$ such that
\begin{eqnarray}\label{6k}
\mu\left(\bigcap_{i=1}^{m}D_{u^m_{v,i}}\right)>0.
\end{eqnarray}
By $\mathcal{H}^1(\delta)>0$ and Step 1, there are two different $v_1,v_2\in\delta$ satisfying (\ref{6k}) and
\begin{eqnarray}\label{6l}
\mu\left(\left(\bigcap_{i=1}^{m}D_{u^m_{v_1,i}}\right)\cap \left(\bigcap_{i=1}^{m}D_{u^m_{v_2,i}}\right)\right)>0.
\end{eqnarray}
Since $u^m_{v_1,i}$ and $u^m_{v_2,i}$, $i=1,\ldots,m$, lie in two different subspaces $v_1^{\perp}$ and $v_2^{\perp}$, there must be a $u^{m}_{v_2,i_0}$ such that $u^m_{v_1,1},\ldots,u^m_{v_1,m}$ and $u^m_{v_2,i_0}$ are $m+1$ linearly independent vectors. By (\ref{6l})
 \begin{eqnarray}
\mu\left(\bigcap_{i=1}^{m}D_{u^m_{v_1,i}}\cap D_{u^{m}_{v_2,i_0}}\right)>0.
\end{eqnarray}

By induction with respect to the dimension, (\ref{6i}) is established for $k=1,\ldots,n$. Let $S=S_n$, we get the desired result.
\end{proof}

\begin{lem}\label{L4}
Let $\Omega$ be a  set of finite perimeter in $\mathbb{R}^n$ and let
\begin{eqnarray}
D_u =\{x\in\partial^{\ast}\Omega:u\cdot v^{\Omega}(x)=0\}.
\end{eqnarray}
Then there exists a set $T_1\subset S^{n-1}$ such that $\mathcal{H}^{n-1}(S^{n-1}\setminus T_1)=0$ and $\mathcal{H}^{n-1}(D_u)=0$ for any $u\in T_1$.
\end{lem}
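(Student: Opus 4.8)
The plan is to realize the set of bad directions as a section of a product set which has product measure zero, and then to conclude by Tonelli's theorem. First I would record the standard facts: since $\Omega$ is a set of finite perimeter in $\mathbb{R}^n$, the reduced boundary $\partial^{\ast}\Omega$ is a Borel set with $\mathcal{H}^{n-1}(\partial^{\ast}\Omega)=P(\Omega)<\infty$, and the generalized inner normal $v^{\Omega}\colon\partial^{\ast}\Omega\to S^{n-1}$ is Borel measurable, being a Radon--Nikodym derivative of $D\chi_{\Omega}$ with respect to $|D\chi_{\Omega}|$. In particular, for each fixed $u$ the set $D_u=\{x\in\partial^{\ast}\Omega\colon u\cdot v^{\Omega}(x)=0\}$ is a Borel set, hence $\mathcal{H}^{n-1}$-measurable.

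Next I would introduce the two finite measures $\nu_1(B):=\mathcal{H}^{n-1}(B\cap\partial^{\ast}\Omega)$ on $\mathbb{R}^n$ and $\nu_2:=\mathcal{H}^{n-1}$ restricted to $S^{n-1}$, and consider
\[
A:=\{(x,u)\in\partial^{\ast}\Omega\times S^{n-1}\colon\ u\cdot v^{\Omega}(x)=0\}.
\]
Since $(x,u)\mapsto u\cdot v^{\Omega}(x)$ is Borel, $A$ is a Borel subset of $\mathbb{R}^n\times S^{n-1}$, so it is measurable for the product measure $\nu_1\otimes\nu_2$. The geometric observation is that for each fixed $x\in\partial^{\ast}\Omega$ the $x$-section $A_x=S^{n-1}\cap v^{\Omega}(x)^{\perp}$ is a great $(n-2)$-dimensional subsphere of $S^{n-1}$, hence $\nu_2(A_x)=\mathcal{H}^{n-1}(A_x)=0$ because $n\ge 2$. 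Tonelli's theorem then gives $(\nu_1\otimes\nu_2)(A)=\int_{\partial^{\ast}\Omega}\nu_2(A_x)\,d\nu_1(x)=0$, while slicing in the other order gives $(\nu_1\otimes\nu_2)(A)=\int_{S^{n-1}}\nu_1(A^u)\,d\nu_2(u)=\int_{S^{n-1}}\mathcal{H}^{n-1}(D_u)\,d\mathcal{H}^{n-1}(u)$, since $A^u=D_u\subset\partial^{\ast}\Omega$. Comparing the two, the nonnegative function $u\mapsto\mathcal{H}^{n-1}(D_u)$ has zero integral over $S^{n-1}$, so it vanishes outside an $\mathcal{H}^{n-1}$-null set; taking $T_1$ to be the complement of that null set yields $\mathcal{H}^{n-1}(S^{n-1}\setminus T_1)=0$ and $\mathcal{H}^{n-1}(D_u)=0$ for all $u\in T_1$, as claimed.

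The only point requiring a little care is the legitimacy of the two applications of Tonelli's theorem, i.e.\ the product-measurability of $A$ and the measurability of the sections $A_x$ and $A^u=D_u$; but this is immediate here because $v^{\Omega}$ is Borel and both $\nu_1$ and $\nu_2$ are finite, so no $\sigma$-finiteness issue arises, and the section computation reduces to the elementary fact that a great subsphere of codimension one in $S^{n-1}$ is $\mathcal{H}^{n-1}$-negligible. Everything else is a routine Fubini argument.
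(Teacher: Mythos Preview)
Your proof is correct, and it is considerably more direct than the paper's argument. The paper does not use Fubini here at all: it first proves a separate combinatorial lemma (Lemma~\ref{7g}) asserting that whenever a family $\{D_u\}_{u\in S}$ of subsets of a finite measure space satisfies $\mu(D_u)>0$ for all $u$ in a set $S\subset S^{n-1}$ of positive $\mathcal{H}^{n-1}$-measure, one can find $n$ linearly independent $u_1,\dots,u_n\in S$ with $\mu\bigl(\bigcap_i D_{u_i}\bigr)>0$; this lemma is established by an induction on the dimension of a linear slice of the sphere together with a countability argument. The proof of Lemma~\ref{L4} in the paper then argues by contradiction: if the bad set $T_2=\{u:\mathcal{H}^{n-1}(D_u)>0\}$ had positive measure, Lemma~\ref{7g} would produce a set of positive $\mathcal{H}^{n-1}$-measure in $\partial^{\ast}\Omega$ on which $v^{\Omega}$ is orthogonal to $n$ linearly independent vectors, forcing $v^{\Omega}=0$ there and contradicting $|v^{\Omega}|=1$.

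Your Tonelli argument bypasses all of this for the purpose of Lemma~\ref{L4}: slicing the incidence set $A\subset\partial^{\ast}\Omega\times S^{n-1}$ in the $x$-direction gives great $(n-2)$-spheres, which are $\mathcal{H}^{n-1}$-null, and the conclusion follows in one line. The trade-off is that the paper actually needs Lemma~\ref{7g} again later, in the proof of Theorem~\ref{2.4}, applied not to $\mathcal{H}^{n-1}$ on $\partial^{\ast}\Omega$ but to $\mathcal{L}^n$ on $\Omega$ and to the sets $D_{f,u}$; so their detour through Lemma~\ref{7g} is not wasted effort, while your Fubini shortcut handles only the present lemma.
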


\begin{proof} Let
$$T_2:=\{u\in  S^{n-1}:\;\mathcal{H}^{n-1}(D_u)>0\}.$$
We would like to apply Lemma \ref{7g}, and take $\mu$ to be the restriction of $\mathcal{H}^{n-1}$ to $\partial^{\ast}\Omega$.  If $\mathcal{H}^{n-1}(T_2)>0$, then  there exist $n$ linearly independent vectors $u_1,u_2,\dots,u_n\in T_2$ such that $\mathcal{H}^{n-1}(\bigcap_{i=1}^{n}D_{u_i})>0$. Let $D=\bigcap_{i=1}^{n}D_{u_i}$. Then for any $x\in D$, we have
$$u_i\cdot v^{\Omega}(x)=0,\;{\rm for\;}i=1,\dots,n.$$
Thus $v^{\Omega}(x)=0$. This is a contradiction, since $|v^{\Omega}(x)|=1$. Let $T_1:=S^{n-1}\setminus T_2$. Then $T_1$ satisfies the conclusion.
\end{proof}

{\bf \noindent Proof of Theorem \ref{2.4}.} On the one hand, if $f(x)=f^{\star}(Ax+x_0)$ for $\mathcal{L}^n$-a.e. $x\in\Omega$ with $A\in SL(n)$ and $x_0\in\mathbb{R}^n$, by the the affine invariance of $\mathcal{E}_{\phi}(f)$, i.e., Lemma \ref{2dd}, we have $\mathcal{E}_{\phi}(f)=\mathcal{E}_{\phi}(f^{\star})$.

 On the other hand, suppose that $f(x)=f^{\star}(Ax+x_0)$ for $\mathcal{L}^n$-a.e. $x\in\Omega$ is not established for any $A\in SL(n)$ and $x_0\in\mathbb{R}^n$, then by Lemma \ref{16b}, there exist $u_0\in S^{n-1}$ and $\delta>0$ such that  for any $u\in B(u_0,\delta)\cap S^{n-1}$,
\begin{eqnarray}\label{6m}
 {\rm either\;(i)\; or\;(ii)\; in\; Lemma\;\ref{16b}\; is\; not\; established}.
\end{eqnarray}
Then there exists some $\bar{u}\in B(u_0,\delta)\cap S^{n-1}\cap T_1$, where $T_1$ is given as in Lemma \ref{L4}, such that
\begin{eqnarray}\label{5d}
\mathcal{L}^n(D_{\bar{u}})=0.
\end{eqnarray}
Otherwise, if $\mathcal{L}^n(D_u)>0$ for any $u\in B(u_0,\delta)\cap S^{n-1}\cap T_1$. Thus by Lemma \ref{7g}, there exist $n$ linearly independent vectors $u_1,u_2,\dots,u_n\in B(u_0,\delta)\cap S^{n-1}\cap T_1$ such that $\mathcal{L}^{n}\left(\bigcap_{i=1}^{n}D_{u_i}\right)>0$.
Let $\bar{D}=\bigcap_{i=1}^{n}D_{u_i}$, then for any $x\in \bar{D}$, $u_i\cdot \nabla f(x)=\nabla_{u_i}f(x)=0$ for $i=1,2,\dots,n$.  Since $u_1,u_2,\dots,u_n$ are linearly independent, $\nabla f(x)=0$ for any $x\in \bar{D}$, which is contradictory with (\ref{2z}).

For $\bar{u}$ as in (\ref{5d}), let $f_1=f_{\bar{u}}^s$, by Theorem \ref{2.1}, we have
$\mathcal{E}_{\phi}(f)>\mathcal{E}_{\phi}(f_1)$. Otherwise, if $\mathcal{E}_{\phi}(f)=\mathcal{E}_{\phi}(f_1)$, then by (\ref{5d}), $\bar{u}\in T_1$ and Theorem \ref{2.3}, (\ref{1b}) is established with respect to the direction $\bar{u}$. Thus, both (i) and (ii) in Lemma \ref{16b} are established for this $\bar{u}$,  which is contradictory with (\ref{6m}).

By Remark \ref{19b}, there exists a sequence of  directions $\{u_i\}$, $i=1,2,\cdots$,  such that the sequence defined by $f_{i+1}=f^s_{i,u_i}$ converges to $f^{\star}$  weakly in  $W^{1,1}$.
 Theorem \ref{2.1} and  Lemma \ref{2x} can now be used to conclude that
$$\mathcal{E}_{\phi}(f)>\mathcal{E}_{\phi}(f_1)\geq\cdots\geq \mathcal{E}_{\phi}(f_i)\;{\rm and}\;\lim_{i\rightarrow \infty}\mathcal{E}_{\phi}(f_i)\geq \mathcal{E}_{\phi}(f^{\star}).$$
 Therefore, $\mathcal {E}_{\phi}(f)>\mathcal {E}_{\phi}(f^{\star})$. \qed

\noindent
\small  School of Mathematics and Statistics, Chongqing Technology and Business University,\\
\small  Chongqing 400067, PR China\\
\small E-mail address: yjl@ctbu.edu.cn\\

\noindent
\small  Department of Mathematics, Shanghai University, Shanghai 200444, China; School of Mathematical Sciences, Fudan University, Shanghai 200433, China, \\
\small  Shanghai 200444, PR China\\
\small E-mail address: dongmeng.xi@live.com\\


\begin{thebibliography}{MRY90}


\bibitem{AFP00} L. Ambrosio, N. Fusco, D. Pallara, {\it Functions of Bounded Variation and Free Discontinuity
Problems}, Oxford University Press, Oxford, 2000.

\bibitem{BCFP} M. Barchiesi, G.M. Capriani, N. Fusco, G. Pisante, {\it  Stability of P\'olya-Szeg\"o inequality for log-concave functions}, J. Funct. Anal. 267 (2014), 2264-2297.

\bibitem{Bianchi17} G. Bianchi, R. J. Gardner, P. Gronchi, {\it Symmetrization in geometry}, Adv. Math. 306 (2017), 51-88.

\bibitem{Boroczky13} K.J. B\"or\"oczky, {\it Stronger versions of the Orlicz-Petty projection inequality}, J. Differential Geom. 95 (2013), 215-247.

\bibitem{BLYZ12} K.J. B\"or\"oczky, E. Lutwak, D. Yang, G. Zhang, {\it The log-Brunn-Minkowski inequality}, Adv. Math. 231 (2012), 1974-1997.

\bibitem{BZ88} J.E. Brothers, W.P. Ziemer, {\it Minimal rearrangements of Sobolev functions}, J. Reine Angew. Math. 384 (1988), 153-179.

\bibitem{Bu96} A. Burchard, {\it Cases of equality in the Riesz rearrangement inequality}, Ann. Math. 143 (1996), 499-527.

\bibitem{Bu97} A. Burchard, {\it Steiner symmetrization is continuous in $W^{1,p}$}, Geom.
Funct. Anal. 7 (1997), 823-860.

\bibitem{Bu04} A. Burchard, Y. Guo, {\it Compactness via symmetrization}, J. Funct. Anal. 214 (2004), 40-73.

\bibitem{BF15} A. Burchard, A. Ferone, {\it On the Extremals of the P\'olya-Szeg\"o Inequality}, Indiana Univ. Math. J. 64 (2015), 1447-1463.

\bibitem{Capriani14} G.M. Capriani, {\it The Steiner rearrangement in any codimension}, Calc. Var. Partial Differential Equations 49 (2014), 517-548.


\bibitem{CCF05}  M. Chleb\'{i}k, A. Cianchi, N. Fusco, {\it The perimeter inequality under
Steiner symmetrization: cases of equality}, Ann. of Math. 162
(2005), 525-555.

\bibitem{Ci00} A. Cianchi, {\it Second-order derivatives and rearrangements}, Duke Math. J. 105 (2000), 355-385.

\bibitem{Ci10} A. Cianchi,  {\it On some aspects of the theory of Orlicz-Sobolev spaces}. In {\it  Around the research of Vladimir Maz'ya. I,} volume 11 of {\it  Int. Math. Ser. (N. Y.)},  81-104. Springer, New York, 2010.


\bibitem{CEFT08} A. Cianchi, L. Esposito, N. Fusco, C. Trombetti, {\it A quantitative P\'olya-Szeg\"o principle}, J. Reine
Angew. Math. 614 (2008), 153-189.

\bibitem{CF02} A. Cianchi, N. Fusco, {\it Functions of bounded variation and rearrangements}, Arch. Ration. Mech. Anal. 165 (2002), 1-40.

\bibitem{CF06} A. Cianchi, N. Fusco, {\it Steiner symmetric extremals in P\'olya-Szeg\"o type inequalities},  Adv. Math. 203 (2006), 673-728.

\bibitem{CF0602} A. Cianchi, N. Fusco, {\it Minimal rearrangements, strict convexity and critical points}, Appl. Anal. 85 (2006), 67-85.


\bibitem{CLYZ09} A. Cianchi, E. Lutwak, D. Yang, G. Zhang, {\it Affine Moser-Trudinger and Morrey-Sobolev inequalities}, Calc. Var. Partial Differential Equations 36 (2009), 419-436.

\bibitem{CPS15} A. Cianchi, L. Pick, L. Slav\'ikov\'a {\it Higher-order Sobolev embeddings and isoperimetric inequalities}, Adv. Math. 273 (2015), 568-650.

\bibitem{CFNT17} A. Cianchi, V. Ferone, C. Nitsch, C. Trombetti, {\it Balls minimize trace constants in BV}, J. Reine Angew. Math. 725 (2017), 41-61.


\bibitem{ET04} L. Esposito, C. Trombetti, {\it Convex symmetrization and P\'olya-Szeg\"o inequality}, Nonlinear Anal. 56 (2004), 43-62.


\bibitem{ER09} L. Esposito, P. Ronca, {\it Quantitative P\'olya-Szeg\"o principle for convex symmetrization}, Manuscripta Math. 130
(2009), 339-362.

\bibitem{EG92} L.C. Evans, R.F. Gariepy, {\it Measure Theory and Fine Properties of Functions}, Studies in
Advanced Math., CRC Press, 1992.


\bibitem{Evans90} L.C. Evans, {\it Weak Convergence Methods for Nonlinear Partial Differential Equations},
CBMS Regional Conference Series in Mathematics 74, American Mathematical Society,
Providence, RI, 1990.

\bibitem{FV04} A. Ferone, R. Volpicelli, {\it Convex rearrangement: equality cases in the P\'olya-Szeg\"o
inequality}, Calc. Var. Partial Differential Equations 21 (2004), 259-272.


\bibitem{FMP08} N. Fusco, F. Maggi, A. Pratelli, {\it The sharp quantitative isoperimetric inequality},  Ann. of Math. 168 (2008), 941-980.

\bibitem{Gardner02} R.J. Gardner, {\it The Brunn-Minkowski inequality}, Bull. Amer. Math. Soc. 39 (2002), 355-405.

\bibitem{Ga06} R.J. Gardner, {\it Geometric Tomography}, second edition, Cambridge University
Press, New York, 2006.

\bibitem{GHW14} R.J. Gardner, D. Hug, W. Weil, {\it The Orlicz-Brunn-Minkowski theory: a general framework, additions, and inequalities}, J. Differential Geom. 97 (2014), 427-476.


\bibitem{GHW15} R.J. Gardner, D. Hug, W. Weil, D. Ye, {\it The dual Orlicz-Brunn-Minkowski theory}, J. Math. Anal. Appl. 430 (2015), 810-829.

\bibitem{GZ98} R.J. Gardner, G. Zhang, {\it Affine inequalities and radial mean bodies}, Amer. J. Math. 120 (1998), 505-528.

\bibitem{GMS98}    M. Giaquinta, G. Modica, J. Sou$\check{c}$ek, Cartesian Currents in the Calculus of Variations, Part I: Cartesian Currents, Part II: Variational Integrals, Springer, Berlin, 1998.

\bibitem{Gruber07} P.M. Gruber, {\it Convex and Discrete Geometry}, Springer, Berlin, 2007.


\bibitem{HS09} C. Haberl, F.E. Schuster, {\it General $L_p$ affine isoperimetric inequalities}, J. Differential Geom. 83 (2009), 1-26.

\bibitem{HLYZ10} C. Haberl, E. Lutwak, D. Yang, G. Zhang, {\it The even Orlicz Minkowski problem}, Adv. Math. 224 (2010), 2485-2510.


\bibitem{HP14} C. Haberl, L. Parapatits, {\it The centro-affine Hadwiger theorem}, J. Amer. Math. Soc. 27 (2014), 685-705.

 \bibitem{HS0902} C. Haberl, F.E. Schuster, {\it Asymmetric affine $L_p$ Sobolev inequalities}, J. Funct. Anal. 257 (2009), 641-658.

\bibitem{HSX12} C. Haberl, F.E. Schuster, J. Xiao, {\it An asymmetric affine P\'olya-Szeg\"o principle}, Math. Ann. 352 (2012), 517-542.

\bibitem{Kawohl86} B. Kawohl, {\it On the isoperimetric nature of a rearrangement inequality and its consequences for
some variational problems}, Arch. Rat. Mech. Anal. 94 (1986), 227-243.

\bibitem{LX17} A.-J. Li, D. Xi, G. Zhang, {\it Volume inequalities of convex bodies from cosine transforms on Grassmann manifolds}, Adv. Math. 304 (2017), 494-538.

\bibitem{Lin17} Y. Lin, {\it Smoothness of the Steiner symmetrization},  Proc. Amer. Math. Soc. 146 (2018), 345-357.

\bibitem{LYJ17} Y. Lin, {\it Affine Orlicz P\'olya-Szeg\"o principle for log-concave functions}, J. Funct. Anal. 273 (2017), 3295-3326.


\bibitem{LL10} Y. Lin, G. Leng, {\it Convex bodies with minimal volume product in $\mathbb{R}^2$-a new proof}, Discrete Math. 310 (2010), 3018-3025.

\bibitem{Ludwig10} M. Ludwig, {\it General affine surface areas}, Adv. Math. 224 (2010), 2346-2360.

\bibitem{LR99} M. Ludwig, M. Reitzner, {\it A characterization of affine surface area}, Adv. Math. 147 (1999),  138-172.

\bibitem{LR10} M. Ludwig, M. Reitzner, {\it A classification of $SL(n)$ invariant valuations}, Ann. of Math. 172 (2010), 1219-1267.

\bibitem{LXZ11} M. Ludwig, J. Xiao, G. Zhang, {\it Sharp convex Lorentz-Sobolev inequalities}, Math. Ann. 350 (2011), 169-197.

\bibitem{Lu93} E. Lutwak, {\it The Brunn-Minkowski-Firey theory. I. Mixed volumes and the Minkowski problem}, J. Differential Geom. 38 (1993), 131-150.

\bibitem{Lu96} E. Lutwak, {\it The Brunn-Minkowski-Firey theory. II. Affine and geominimal surface areas}, Adv. Math. 118 (1996), 244-294.

\bibitem{LYZ00} E. Lutwak, D. Yang, G. Zhang, {\it $L_p$ affine isoperimetric inequalities}, J. Differential Geom. 56 (2000), 111-132.

\bibitem{LYZ02} E. Lutwak, D. Yang, G. Zhang, {\it Sharp affine $L_p$ Sobolev inequalities}, J. Differential Geom. 62 (2002), 17-38.

\bibitem{LYZ04} E. Lutwak, D. Yang, G. Zhang, {\it On the $L_p$-Minkowski problem}, Trans. Amer. Math. Soc. 356 (2004), 4359-4370.

\bibitem{LYZ06} E. Lutwak, D. Yang, G. Zhang, {\it Optimal Sobolev norms and the $L^p$ Minkowski problem}, Int. Math. Res. Not. (2006), Art. ID 62987, 1-21.

\bibitem{LYZ10} E. Lutwak, D. Yang, G. Zhang, {\it Orlicz projection bodies}, Adv. Math. 223 (2010), 220-242.

\bibitem{LYZ1002} E. Lutwak, D. Yang, G. Zhang, {\it Orlicz centroid bodies}, J. Differential Geom. 84 (2010), 365-387.

\bibitem{Ma85} V.G. Ma${\rm z}^{\prime}$ya, {\it Sobolev Spaces}, Springer-Verlag, Berlin (1985).

\bibitem{Ma11} V.G. Ma${\rm z}^{\prime}$ya, {\it Sobolev Spaces with Applications to Elliptic Partial Differential Equations}, Springer, Heidelberg, 2011.

\bibitem{Nguyen16} V.H. Nguyen, {\it New approach to the affine P\'olya-Szeg\"o  principle and the stability version of the affine Sobolev inequality}, Adv. Math. 302 (2016), 1080-1110.

\bibitem{PS51} G. P\'olya, G. Szeg\"o, {Isoperimetric inequalities in mathematical physics}, Ann. Math. Stud. 27, Princeton University Press (1951).


\bibitem{Schneider13} R. Schneider, {\it Convex Bodies: The Brunn-Minkowski Theory}, Encyclopedia of Mathematics and its Applications, 151. Cambridge Univ. Press, Cambridge, 2014.

\bibitem{Ta94} G. Talenti, {\it Inequalities in rearrangement invariant function spaces}, In: M. Krbec, A. Kufner, B. Opic, J. R\'akosnik (eds.) Nonlinear Analysis, Function Spaces and Applications, vol. 5, 177-230, Prometheus, Prague (1994).

\bibitem{Trudinger97} N.S. Trudinger, {\it On new isoperimetric inequalities and symmetrization}, J. reine angew. Math. 488 (1997), 203-220.


\bibitem{Vol67} A.I. Vo${\rm l}^{\prime}$pert, {\it Spaces $BV$ and quasi-linear equations}, Math. USSR Sb. 17 (1967), 225-267.

\bibitem{Wang13} T. Wang, {\it The affine P\'olya-Szeg\"o principle:
Equality cases and stability}, J. Funct. Anal. 265 (2013), 1728-1748.

\bibitem{XGL} D. Xi, L. Guo, G. Leng, {\it Affine inequalities for $L_p$ mean zonoids}, Bull. Lond. Math. Soc. 46 (2014), 367-378.

\bibitem{XJL14} D. Xi, H. Jin, G. Leng, {\it The Orlicz Brunn-Minkowski inequality}, Adv. Math. 260 (2014), 350-374.

\bibitem{XL16} D. Xi, G. Leng, {\it Dar's conjecture and the log-Brunn-Minkowski inequality}, J. Differential Geom. 103 (2016), 145-189.

\bibitem{Zhang99} G. Zhang, {\it The affine Sobolev inequality}, J. Differential Geom. 53 (1999), 183-202.

\bibitem{ZZX14} B. Zhu, J. Zhou,  W. Xu, {\it Dual Orlicz-Brunn-Minkowski theory}, Adv. Math. 264 (2014), 700-725.

\bibitem{Zhu12} G. Zhu, {\it The Orlicz centroid inequality for star bodies}, Adv. in App. Math. 48 (2012), 432-445.

\bibitem{Ziemer89} W.P. Ziemer, {\it Weakly Differentiable Functions: Sobolev spaces and functions of bounded variation}, GTM 120, Springer Verlag, 1989.
\end{thebibliography}
\end{document}